\newtheorem{thm}{Theorem}
\newtheorem{assumption}[thm]{Assumption}
\newtheorem{lem}[thm]{Lemma}
\newtheorem{defi}[thm]{Definition}
\newtheorem{prop}[thm]{Proposition}
\newtheorem{rk}[thm]{Remark}
\newcommand{\field}[1]{\mathbb{#1}}
\newcommand{\E}{\field{E}}
\newcommand{\PP}{\field{P}}
\newcommand{\R}{\field{R}}
\newcommand{\vip}{\vskip.2cm}
\newcommand{\Ff}{{\mathcal F}}
\begin{document}

\date{\today}

\title[Nonparametric estimation for interacting particle systems]{Nonparametric estimation for interacting particle systems: McKean-Vlasov models}

\author{Laetitia Della Maestra and Marc Hoffmann}

\address{Laetitia Della Maestra, Universit\'e Paris-Dauphine \& PSL, CNRS, CEREMADE, 75016 Paris, France}
\email{dellamaestra@ceremade.dauphine.fr}
\address{Marc Hoffmann, Universit\'e Paris-Dauphine \& PSL, CNRS, CEREMADE, 75016 Paris, France}
\email{hoffmann@ceremade.dauphine.fr}

\begin{abstract} 
We consider a system of $N$ interacting particles, governed by transport and diffusion, that converges in a mean-field limit to the solution of a McKean-Vlasov equation. From the observation of a trajectory of the system over a fixed time horizon, we investigate nonparametric estimation of the solution of the associated nonlinear Fokker-Planck equation, together with the drift term that  controls the interactions, in a large population limit $N \rightarrow \infty$. 
We build data-driven kernel estimators and establish oracle inequalities, following Lepski's principle. Our results are based on a new Bernstein concentration inequality in McKean-Vlasov models for the empirical measure around its mean, possibly of independent interest. We obtain adaptive estimators over anisotropic H\"older smoothness classes built upon the solution map of the Fokker-Planck equation, and prove their optimality in a minimax sense. In the specific case of the Vlasov model, we derive an estimator of the interaction potential and establish its consistency.  
\end{abstract}

\maketitle

\noindent \textbf{Mathematics Subject Classification (2010)}: 
62G05, 62M05, 60J80, 60J20, 92D25.\\
\textbf{Keywords}: Nonparametric estimation; statistics and PDE; interacting particle systems; McKean-Vlasov models; oracle inequalities; Goldenshluger-Lepski method; anisotropic estimation.

\tableofcontents

\section{Introduction}
\subsection{Setting}
We continuously observe a stochastic system of $N$ interacting particles 
\begin{equation} \label{eq: data}
X_t = (X_t^1,\ldots, X_t^N),\;\;t \in [0,T]
\end{equation}
evolving in an Euclidean ambient space $\R^d$, that solves
\begin{equation} \label{eq:diff basique}
\left\{
\begin{array}{l}
dX_t^i = b(t,X_t^i,\mu^N_t)dt+\sigma(t,X_t^i) dB_t^i,\;\;1 \leq i \leq N,\; t \in [0,T],\\  \\
\mathcal L(X_0^1,\ldots, X_0^N) = \mu_0^{\otimes N},
\end{array}
\right.
\end{equation}
where 
$\mu^N_t= N^{-1} \sum_{i = 1}^N \delta_{X_t^i}$ is the empirical measure of the system.
The $B^i$
are independent $\R^d$-valued Brownian motions, and the transport and diffusion coefficients 
$b$ and $\sigma$ are sufficiently regular 
so that 
$\mu^N_t \rightarrow \mu_t$ weakly as $N \rightarrow \infty$,
where $\mu_t$ is a weak solution of the parabolic nonlinear equation
\begin{equation}  \label{eq: mckv approx}
\left\{ 
\begin{array}{ll}
\partial_{t}\mu_{t} + \mathrm{div}( b(t,\cdot,\mu_{t}) \mu_{t}) = \tfrac{1}{2}\sum_{k,k'=1}^d\partial_{kk'}^2((\sigma \sigma^\top)(t,\cdot)_{kk'}\mu_{t}),\\ 
 \mu_{t=0} = \mu_0,
\end{array}
\right. 
\end{equation} 
see Section \ref{sec: assumptions} below.  In this context, we are interested in estimating nonparametrically from data \eqref{eq: data} the solution $(t,x) \mapsto \mu_t(x)$ of \eqref{eq: mckv approx} and the drift function $(t,x,\mu) \mapsto b(t,x,\mu) \in \R^d$ at the value $(t,x,\mu) = (t,x,\mu_t)$. The time horizon $T$ is fixed and asymptotics are taken as $N \rightarrow \infty$.\\

A particular case of interest is a homogeneous drift with a linear dependence in the measure argument. The drift term in \eqref{eq:diff basique} then takes the form
\begin{equation} \label{eq: explain drift}
b(t,X_t^i,\mu^N_t) = \int_{\R^d}\widetilde b(X_t^i,y)\mu_t^N(dy) = N^{-1}\sum_{j = 1}^N \widetilde b(X_t^i,X_t^j),
\end{equation}
for some function $\widetilde b: \R^{d} \times \R^d \rightarrow \R^d$. In the paper, when we specialise on this case, we consider $\widetilde b$ of the form
$$\widetilde b(x,y) = F(x-y)+G(x)$$
for some regular $F, G: \R^d \rightarrow \R^d$. In this case, we have
$$b(t,X_t^i,\mu^N_t)  = F\star \mu_t^N(X_t^i)+G(X_t^i),$$
where $\star$ denotes convolution. The function $F$ plays the role of an interaction force applied to the particle system while $G$ accounts for an external force in the motion of each particle. If the forces $F$ and $G$ derive from smooth potentials $V,W:\R^d \rightarrow \R$, the interaction then takes the form $F(x)=-\nabla W(x)$ and we have a confinement $G(x) = -\nabla V(x)$ , see {\it e.g.} \cite{benachour1998nonlinear, herrmann2008large, CATTIAUXetal}. However, we work on a fixed time horizon $[0,T]$ in the paper, and will not need this point of view\footnote{usually required to control the model for convergence to equilibrium when $T$ is large.}.\\

In the semi-linear representation \eqref{eq: explain drift}, we are interested in estimating nonparametrically from data \eqref{eq: data} the interaction force $x \mapsto F(x)$; the parameter $x \mapsto G(x)$ is considered as a nuisance.

\subsection{Motivation}
Stochastic systems of interacting particles and associated nonlinear Markov processes in the sense of McKean \cite{MCKEAN} date back to the 1960's and originated from statistical physics in plasma physics. Their importance in probability theory progressively grew in the following decades, and a versatility of fundamental tools were developed in this context like {\it e.g.} coupling methods, geometric inequalities, propagation of chaos, concentration and fluctuations in abstract functional spaces, see Sznitman \cite{SZNITMAN, Sznitman2}, Tanaka and Hitsuda \cite{TANAKAHITSUDA}, Fernandez {\it et al.} \cite{BEGOMELEARD}, M\'el\'eard \cite{meleard1996asymptotic}, Malrieu \cite{MALRIEU}, Cattiaux {\it et al.} \cite{CATTIAUXetal}, Bolley {\it et al.} \cite{bolley2007quantitative}, among a myriad of references. However, until the early 2000's, a modern formulation of a statistical inference program in this context was out of reach (with some notable exceptions like {\it e.g.} Kasonga \cite{kasonga1990maximum}), at least for two reasons: first, the fine probabilistic tools required for nonparametric adaptive estimation were still in full development; second and perhaps more importantly, microscopic particles systems issued from statistical physics are not naturally observable and the motivation for statistical inference is not obvious in this context. The situation progressively evolved around the 2010's, with the start of a kind of renaissance of McKean-Vlasov type models in several application fields that model collective and observable dynamics, ranging from mathematical biology (neurosciences, Baladron {\it et al.} \cite{Baladron}, structured models in population dynamics, Mogilner {\it et al.} \cite{mogilner99}, Burger {\it et al.} \cite{BURGER}) to social sciences (opinion dynamics, Chazelle {\it et al.} \cite{CHAZELLE}, cooperative behaviours, Canuto {\it et al.} \cite{canuto12}) and finance (systemic risk, Fouque and Sun \cite{fouque2013systemic}). More recently, mean-field games (Cardaliguet {\it et al.} \cite{CARDA}, Cardaliaguet and Lehalle \cite{LC19}) appear as a new frontier for statistical developments, see in particular the recent contribution of Giesecke \cite{giesecke2020inference}. The field has reached enough maturity for the necessity and interest of a systematic statistical inference program, starting with nonparametric estimation. This is the topic of the paper.\\

Parallel to understanding collective dynamics from a statistical point of view, some interest in the study of statistical models related to PDE's has progressively emerged over the last decade. Typical examples include nonparametric Bayes and uncertainty quantification for inverse problems, see Abraham and Nickl \cite{abraham2019statistical}, Monard {\it et al.} \cite{monard2019consistent}, Nickl \cite{nickl2017bayesian, nickl2017bernstein},  and the references therein, or inference in structured models from microscopic data (Doumic {\it et al.} \cite{doumic2015statistical, doumic2012nonparametric}, Hoffmann and Olivier \cite{HO}, Boumezoued {\it et al.} \cite{BHJ}, Ngoc {\it et al.} \cite{ngoc2019nonparametric}, Ma\"\i da {\it et al.} \cite{maida2020statistical}). The analysis of elliptic, parabolic or transport-fragmentation equations sheds new light on the underlying nonparametric structure of companion statistical experiments and enrich the classical theory. To that extent, we provide in this paper a first step in that direction for a certain kind of nonlinear parabolic equations, in the sense of McKean \cite{MCKEAN}. Finally, our work can also be embedded in the framework of functional data analysis, where we observe $N$ diffusion processes with common dynamics, see {\it e.g.} the recent work of Comte and Genon-Catalot \cite{comte2019nonparametric} formally contained in our framework, for $d=1$ and in absence of interactions in the drift $b$.

\subsection{Results and organisation of the paper}

In Section \ref{sec: assumptions}, we detail the notations and assumptions of the model and build kernel estimators for $\mu_t(x)$ and $b(t,x,\mu_t)$. Whereas the estimation for $\mu_t$ is standard, the estimation of the drift requires a smoothing in both time and space of the empirical measure $\pi^N(dt,dx) = N^{-1}\sum_{i = 1}^N\delta_{X_t^i}(dx)X^i(dt)$, that estimates the intermediate function $\pi(t,x) = b(t,x,\mu_t)\mu_t(x)$. We then use a quotient estimator to recover $b$.\\ 

In Section \ref{sec: esti mu et b}, we adopt the Goldenshluger-Lepski method \cite{GL08,GL11,GL14} to tune the bandwidths of both estimators in a data driven way and obtain oracle inequalities in Theorems \ref{thm: GL mu} and \ref{thm: oracle b}
for both $\mu$ and $b$. We further develop a minimax theory in Section \ref{sec: adaptive estimation} when $\mu$ and $b$ belong to anisotropic H\"older spaces in time and space variable, that are built upon  the solution of the parabolic nonlinear limiting equation \eqref{eq: mckv approx} and prove the optimality and smoothness adaptivity or our estimators in Theorems \ref{thm minimax mu} and \ref{thm minimax b}. We finally explore in Section \ref{sec: identif interaction potential} the identification of the interaction force $F$ in the Vlasov model where the drift takes the form $b(x,\mu) = F\star(x) \mu+G(x)$, for some sufficiently well localised functions $F$ and $G$. We prove in Theorem \ref{thm: identif Vlasov} that one can consistently estimate $F$ (hence $G$) by means of a Fourier type estimator, inspired by blind deconvolution methods, see {\it e.g.} Johannes \cite{JJ09}.\\

We develop the probabilistic tools we need to undertake our statistical estimates in Section \ref{sec: proof concentration}. We study the fluctuations of $\mu_t^N$ around its mean $\mu_t$ in Theorem \ref{thm: concentration}, with time dependent extension to the fluctuations of $\mu_t^N(dx)\rho(dt)$ around $\nu(dt,dx)=\mu_t(dx)\rho(dt)$ for arbitrary weight measures $\rho(dt)$.  We prove a Bernstein concentration inequality that reads 
\begin{align*}
\mathrm{Prob}\Big(N^{-1}\sum_{i = 1}^N\int_0^T\phi(t,X_t^i)\rho(dt)-\int_{[0,T] \times \R^d}&\phi(t,y)\mu_t(dy)\rho(dt) \geq x\Big) \\
& \leq \kappa_1 \exp\Big(-\frac{ \kappa_2 Nx^2}{|\phi|_{L^2(\nu)}^2+|\phi|_\infty x}\Big),\;\;\forall x \geq 0,
\end{align*}
over test functions $\phi$ and for some $\kappa_i= \kappa_i(T,\sigma,b,\mu_0) >0$.  It improves on variance estimates based on coupling or geometric inequalities that usually need $\phi$ to be $1$-Lipschitz, whereas nonparametric statistical estimation requires $\phi = \phi_N$ to mimic a Dirac mass as $N \rightarrow \infty$ that can be controlled in $L^2$-norm but behaves badly in Lipschitz norm. Bernstein's inequality for a range of deviation valid for all $x \geq 0$ is also the gateway to nonparametric adaptive estimation; it is not provided by concentration in Wasserstein distance like in Bolley {\it et al.} \cite{bolley2007quantitative} that moreover has the drawback of adding an additional unavoidable dimensional penalty in the rates of convergence.\\

Our method of proof avoids coupling by relying on a Girsanov argument, following classical ideas, recently revisited for instance by Lacker \cite{Lacker}, a key reference for our work.  
We classically require strong ellipticity for the diffusion coefficient and Lipschitz continuity in the space variable. As for the drift, we assume at least Lipschitz continuity 
$$|b(t,x,\mu)-b(t,x',\mu')| \leq C\big(|x-x'|+\mathcal W_1(\mu,\mu')\big)$$
in Wasserstein-1 metric $\mathcal W_1$. In our approximation argument, the logarithm of the Girsanov density between the law of the data and a companion coupled system of independent particles is of order  $N\int_0^T \int_{\R^d}\big| b(s,x,\mu^N_s)- b(s,x,\mu_s)\big|^2\mu_s^N(dx)ds \lesssim N \sup_{0 \leq t \leq T}\mathcal W_1(\mu_t^N,\mu_t)^2$ for which we need  sharp integrability properties uniformly in $N$. In order to circumvent the unavoidable dimensional effect of the approximation $\mathcal W_1(\mu_t^N,\mu_t) \approx N^{-1/\max(2,d)}$, see Fournier and Guillin \cite{FournierGuillin}, we assume moreover $k$-linear differentiability for $b$ in the measure variable, see Assumption \ref{ass: basic lip} in Section \ref{sec: assumptions}. This enables us to control by a sub-Gaussianity argument for $U$-statistics each term of a Taylor-like expansion of $b(t,x,\mu_t^N)-b(t,x,\mu_t)$ in the measure variable; we obtain the desired result, provided $k \geq d/2$ (with a special modification in dimension $d=2$). In particular, in the Vlasov model, we formally have $k=\infty$ and the result is valid in all dimension $d \geq 1$.\\

Section \ref{sec: proof concentration} and \ref{sec: proof nonpara} are devoted to the proofs and an Appendix (Section \ref{sec: appendix}) contains auxiliary technical results.
 
\section{Model assumptions and construction of estimators} \label{sec: assumptions}
\subsection{The system of interacting particles and its limit}
We fix an integer $d \geq 1$ and a time horizon $T>0$. The random processes take their values in $\R^d$.
We write $|\cdot |$ for the Euclidean distance (on $\R$ or $\R^d$) or sometimes for the modulus of a complex number, and $x^\top x = |x|^2$.

\subsubsection*{Functions}
We consider functions that are mappings defined on products of metric spaces (typically $[0,T] \times \R^d \times \mathcal P_1$ or subsets of these) with values in $\R$ or $\R^d$. Here, $\mathcal P_1$ denotes the set of probability measures on $\mathbb{R}^{d}$ with a first moment, endowed with the Wasserstein $1$-metric
$$\mathcal W_1(\mu,\nu) = \inf_{m \in \Gamma(\mu,\nu)} \int_{\R^d \times \R^d} \big|x-y\big|m(dx,dy) = \sup_{|\phi|_{\mathrm{Lip}}\leq 1}\int_{\R^d}\phi\, d\big(\mu-\nu\big),$$
where $\Gamma(\mu, \nu)$ denotes the set of probability measures on  $\R^d \times \R^d$ with marginals $\mu$ and $\nu$. 
All the functions in the paper are implicitly measurable with respect to the Borel-sigma field induced by the product topology.  
A $\R^d$-valued function $f$ is written componentwise as $f = (f^k)_{1 \leq k \leq d}$ where the $f^k$ are real-valued. 
The product $f \otimes g$ of two $\R^d$-valued functions is the $\R^d$-valued function with $\R^{2d}$ variables with components $(f\otimes g)^k(x,y)= f^k(x)g^k(y)$. 
If $f: [0,T] \times \R^d \rightarrow \R$, we set 
$|f|_\infty =\sup_{t,x}|f(t,x)|$ and $|f|_p = (\int_{[0,T]\times \R^d}|f(t,x)|^pdx\,dt\big)^{1/p}$ for $1 \leq p < \infty$. Depending on the context, if $f:[0,T]\rightarrow \R$ or $f:\R^d \rightarrow \R$  is a function of time or space only,
we sometimes write $|f|_p$ for $(\int_0^T |f(t)|^pdt)^{1/p}$ or $(\int_{\R^d} |f(x)|^pdx)^{1/p}$ when no confusion is possible.
\subsubsection*{Constants}
We repeatedly use positive quantities $\kappa_i,\varpi_i,C_i, i=1,2,\ldots$ that do not depend on $N$, that we call constants, but that actually may (continuously) depend on model parameters.  In most cases, they are explicitly computable. We also use special letters like $\kappa, \delta, \tau, c_\pm, \ldots$, but they will only appear once. The generic notation $C$ is sometimes used before it is set depending on a model parameter. 
The notation $\varpi_i$ stands for quantities that need to be tuned in an algorithm (like an estimator).

%

\subsubsection*{Assumptions} We work under strong ellipticity and Lipschitz smoothness assumptions on the diffusion matrix 
$\sigma: [0,T] \times \R^d \rightarrow \R^d \otimes \R^d$ and the drift $b: [0,T] \times \R^d \times \mathcal P_1 \rightarrow \R^d$, as well as strong integrability properties for the initial condition $\mu_0$. 

\begin{assumption} \label{ass: init condition}
For some $\gamma_0>0,\gamma_1 \geq 1$, the initial condition $\mu_0$ satisfies 
\begin{equation} \label{eq: def subgaussian}
\int_{\R^d}\exp(\gamma_0 |x|^2)\mu_0(dx) \leq \gamma_1.
\end{equation}
\end{assumption}

\begin{assumption} \label{ass: minimal prop sigma} The diffusion matrix $\sigma$ is measurable and for some $C\geq 0$, we have
$$|\sigma(t,x')-\sigma(t,x)| \leq C|x'-x|.$$
Moreover, $c = \sigma \sigma^\top$ is such that $\sigma_-^2 |y|^2 \leq (c(t,x)y)^\top y  \leq \sigma_+^2|y|^2$ for some $\sigma_\pm >0$.
\end{assumption}

As for the regularity of the drift 
$$b: [0,T] \times \R^d \times \mathcal P_1 \rightarrow \R^d,$$
the notion of linear differentiability, commonly used in the literature of mean-field games and McKean-Vlasov equations in order to quantify the smoothness of $\mu \mapsto b(t,x,\mu)$ as a mapping $\mathcal P_1\rightarrow \R^d$ will be the most useful in our setting. We refer in particular to the illuminating section 2.2. in Jourdain and Tse \cite{jourdain2020central} and the references therein.
\begin{defi} \label{def: linear diff}
A mapping $f:\mathcal P_1\rightarrow \R^d$ is said to have a linear functional derivative, if there exists $\delta_\mu f:\R^d\times \mathcal P_1\rightarrow \R^d$ (sometimes denoted by $\frac{\delta f}{\delta \mu}$) such that 
\begin{equation} \label{eq: def derivee mesure}
f(\mu')-f(\mu) = \int_0^1\int_{\R^d} \delta_\mu f(y,(1-\vartheta) \mu+\vartheta\mu')(\mu'-\mu)(dy)d\vartheta
\end{equation}
with the following smoothness properties
\begin{align*} \label{smoothness derivee mesure}
&\displaystyle \big|\delta_\mu f(y',\mu')- \delta_\mu f(y,\mu)\big|  \leq C\big(\mathcal W_1(\mu',\mu)+|y'-y|\big),  \\  
&\displaystyle \,\big|\partial_y \big(\delta_\mu f(y,\mu')- \delta_\mu f(y,\mu)\big)\big|  \leq C\mathcal W_1(\mu',\mu)
\end{align*}
for some $C \geq 0$.
\end{defi}
We can iterate the process described in \eqref{eq: def derivee mesure} and obtain a notion of $k$-linear functional derivative via the existence of  
mappings 
$$\delta_\mu^\ell f: (\R^d)^\ell \times \mathcal P_1 \rightarrow \R^d\;\;\text{for}\;\;\ell = 1,\ldots, k$$ 
defined recursively by $\delta_\mu^\ell f = \delta_\mu \circ \delta_\mu^{\ell-1}f$ and 
enjoying associated smoothness properties. 
\begin{assumption} \label{ass: basic lip}
The drift $b: [0,T] \times \R^d \times \mathcal P_1 \rightarrow \R^d$ is measurable and 
$$b_0 = \sup_{t \in [0,T]}|b(t,0,\delta_0)| < \infty.$$
Moreover, one of the following three conditions is satisfied for some $C > 0$:
\begin{enumerate}
\item[(i)] (Lipschitz continuity.) We have $d=1$ and 
$$
\big|b(t,x',\mu')-b(t,x,\mu)\big| \leq C\big(|x'-x|+\mathcal W_1(\mu',\mu)\big).
$$
\item[(ii)] (Existence of a functional derivative of order $k$.) Let $k \geq 1$. For ($d=1$ and $k \geq 1$) or ($d=2$ and $k \geq 2$) or ($d \geq 3$ and $k\geq d/2$), we have $(i)$ and the map  
$$\mu \mapsto b(t,x,\mu)$$
admits a functional derivative of order $k$ in the sense of Definition \ref{def: linear diff}. 
Moreover, the following representation holds
\begin{equation} \label{eq: rep remainder b}
\delta^k_\mu b(t,x,(y_1,\ldots, y_k),\mu) = \sum_{\mathcal I \subset \{1,\ldots, k\}, m \geq 1} \bigotimes_{j \in\mathcal I} (\delta_\mu^k b)_{\mathcal I, j, m}(t,x,y_j,\mu),
\end{equation}
where the sum in $m$ is finite with at most $m_b$ terms and the
mappings $(\delta^k_\mu b)_{\mathcal I, j,m} : [0,T]\times \R^d \times \R^d \times \mathcal P_1 \rightarrow \R^d$ are such that
$$|(\delta^k_\mu b)_{\mathcal I, j,m}(t,x',y',\mu)-(\delta^k_\mu b)_{\mathcal I, j,m}(t,x,y,\mu)| \leq C(|x'-x|+|y'-y|).$$
\item[(iii)] (Vlasov case.)
We have $d \geq 1$ and
$$b(t,x,\mu) = \int_{\R^d}\widetilde{b}(t,x,y)\mu(dy)$$
for some measurable $\widetilde b: [0,T] \times \R^d \times \R^d \rightarrow \R^d$ such that:
$$
\big|\widetilde b(t,x',y')-\widetilde b(t,x,y)\big| \leq C(|x'-x|+|y'-y|).
$$
\end{enumerate}
\end{assumption}
We let $|b|_{\mathrm{Lip}}$ denote the smallest $C\geq 0$ for which Assumption \ref{ass: basic lip} (i) holds and $|\delta_\mu^k b|_{\mathrm{Lip}}$ the smallest constant $C \geq 0$ for which  Assumption \ref{ass: basic lip} (ii) holds for the highest order of differentiability.

\begin{rk} {\bf 1)} The representation \eqref{eq: rep remainder b} in Assumption  \ref{ass: basic lip} (ii) is merely technical and enables one to obtain a control in $\mathcal W_1$-distance of the remainder term in Taylor-like expansions of $b(t,x,\mu)$ in an easy way, see in particular the proof of Proposition \ref{prop: controle change of proba}, Step 2 below. It can presumably be relaxed, but will be sufficient for the level of generality intended in the paper. It accomodates in particular drifts of the form
$$b(t,x,\mu) = \sum_j F_j\Big(t,x,\int_{\R^{q_2}} G_j\big(t,x,\int_{\R^d} H_j(t,x,z)\mu(dz),z'\big)\lambda_j(dz')\Big)$$ for smooth mappings $F_j(t,x,\cdot): \R^{q_3}\rightarrow \R^d,G_j(t,x,\cdot) : \R^{q_1} \times \R^{q_2} \rightarrow \R^{q_3}, H_j(t,x,\cdot):\R^d \rightarrow \R^{q_1}$ and positive measures $\lambda_j$ on $\R^{q_2}$ in some cases and combinations of these, see Jourdain and Tse \cite{jourdain2020central}. Explicit examples of mean-field models where the structure of the drift is of the form \ref{ass: basic lip} (ii) rather than \ref{ass: basic lip} (i) or (iii) are given for instance in \cite{coghi2018pathwise,  MR779460, meleard1996asymptotic, MJ98}.
{\bf 2)}
Condition (iii) is stronger than (ii): under Assumption \ref{ass: basic lip} (iii), 
$$\int_{\R^d}\widetilde b(t,x,\cdot)d(\mu'-\mu) \leq C\sup_{|\phi|_{\mathrm{Lip}}\leq 1}\int_{\R^d} \phi \,d(\mu'-\mu)  = C\mathcal W_{1}(\mu',\mu),$$
thus
$\big|b(t,x',\mu')-b(t,x,\mu) \big|  = \big| \int_{\R^d}(\widetilde b(t,x',\cdot)-\widetilde b(t,x,\cdot))d\mu' - \int_{\R^d} \widetilde b(t,x,\cdot)\,d(\mu-\mu')\big|
 \leq C\big(|x'-x|+\mathcal W_1(\mu',\mu)\big)$
and Assumption \ref{ass: basic lip} (i) holds true. Moreover $\delta_\mu b(t,x,y,\mu) = \widetilde b(t,x,y)$ and Assumption \ref{ass: basic lip} (ii) holds true as well.
\end{rk}


We let $\mathcal C = \mathcal C([0,T],(\R^d)^N)$ denote the space of continuous functions on $(\R^d)^N$, equipped with the filtration $(\mathcal F_t)_{0 \leq t \leq T}$  induced by our observation, namely the canonical mappings  
$$X_t(\omega)  = \big(X_t^1(\omega), \ldots, X_t^N(\omega)\big) = \omega_t$$
and modified to be right-continuous for safety. For $\mu_0 \in \mathcal P_1$, the probability $\mathbb P^N$ on $(\mathcal C,\mathcal F_T)$ under which  the canonical process $X = (X_t^1, \ldots, X_t^N)_{0 \leq t \leq 1}$ is a weak solution of \eqref{eq:diff basique} for the initial condition $\mu_0^{\otimes N}$ is uniquely defined under Assumptions \ref{ass: init condition}, \ref{ass: minimal prop sigma} and \ref{ass: basic lip}. Recommended reference (that covers our set of assumptions) is the textbook by Carmona and Delarue \cite{carmona2018probabilistic} or the lectures notes of Lacker  \cite{lacker2018mean}).

\subsection{Kernel estimators} \label{sec: construct estim}
We pick two bounded and compactly supported kernel functions 
$H:(0,T) \rightarrow \R$ and $K:\R^d \rightarrow \R$
such that
$$\int_0^T H(s)ds = \int_{\R^d}K(y)dy = 1.$$
Let $\ell \geq 1$ be an integer. We say that the kernels $H$ or $K$ {\it have order} $\ell$ if, for $k = 0,\ldots, \ell-1$, we have
\begin{equation} \label{eq: def order}
\int_0^T s^kH(s)ds = \int_{\R^d}(y^1)^kK(y)dy^1 = \ldots =  \int_{\R^d}(y^d)^kK(y)dy^d  = {\bf 1}_{\{k = 0\}},
\end{equation}
 with $y = (y^1,\ldots, y^d)$.
\subsubsection*{Construction of an estimator of $\mu_t(x) \in \R$}
Let  $(t_0,x_0) \in (0,T] \times \R^d$. For $h >0$ we obtain a family of estimators of $\mu_{t_0}(x_0)$ by setting
\begin{equation} \label{eq: def mu hat}
\widehat \mu_h^N(t_0,x_0) = \int_{\R^d} K_h(x_0-x)\mu_{t_0}^N(dx),
\end{equation}
with $K_h(x) = h^{-d}K(h^{-1}x)$.
\subsubsection*{Construction of an estimator of $b(t,x,\mu_t) \in \R^d$} Let  $(t_0,x_0) \in (0,T) \times \R^d$.
Abusing notation slightly, define the $\R^d$-valued random measure 
$$\pi^N(dt,dx) = N^{-1}\sum_{i = 1}^N X^i(dt)\delta_{X_t^i}(dx),$$
defined by
$$\int_{[0,T] \times \R^d} \phi(t,x) \pi^N(dt,dx) = N^{-1}\sum_{i = 1}^N\int_0^T\phi(t,X_t^i)dX_t^i$$
 for a test function $\phi: [0,T] \times \R^d \rightarrow \R$.
Letting
$$(H\otimes K)_{\boldsymbol h}(t,x) = H_{h_1}(t)K_{h_2}(x)\;\;\text{for}\;\;\boldsymbol h = (h_1,h_2),h_i>0,$$ 
with $H_h(t) = h^{-1}H(h^{-1}t)$, we obtain a family of estimators of $\pi(t_0,x_0) = b(t_0,x_0,\mu_{t_0})\mu_{t_0}(x_0)$ by setting
\begin{align}
\widehat \pi_{\boldsymbol h}^N(t_0,x_0)&  = \int_{[0,T] \times \R^d} (H\otimes K)_{\boldsymbol h}(t_0-t,x_0-x)\pi^N(dt,dx) \nonumber \\
& =  \Big(\int_{[0,T] \times \R^d} (H\otimes K)_{\boldsymbol h}(t_0-t,x_0-x)(\pi^N)^k(dt,dx)\Big)_{1 \leq k \leq d}, \label{eq: est pi}
\end{align}
{\it i.e.} by smoothing componentwise $\pi^N(dt,dx)$.  We finally estimate $b(t_0,x_0,\mu_{t_0})$ by 
$$\widehat b_{h,\boldsymbol h}^N(t_0,x_0)_\varpi = \frac{\widehat \pi_{\boldsymbol h}^N(t_0,x_0)}{\widehat \mu_h^N(t_0,x_0) \vee \varpi} \in \R^d,\;\;\boldsymbol h = (h_1,h_2),$$
with $h,h_i>0$  and some threshold $\varpi >0$ that prevents the estimator to blow-up for small values of $\widehat \mu_h^N(t_0,x_0)$.

 
 \section{Nonparametric oracle estimation} \label{sec: esti mu et b}
 
 Our results involve positive quantities that continuously depend on real-valued parameters of the problem, namely
$$\mathfrak b = \big(\gamma_0, \gamma_1,b_0,|b|_{\mathrm{Lip}}, |\delta^k_{\mu} b|_{\mathrm{Lip}}, m_b, \sigma_{\pm}, T, d\big),$$
defined in Assumptions  \ref{ass: init condition}, \ref{ass: minimal prop sigma} and \ref{ass: basic lip}, together with the dimension $d \geq 1$ of the ambient space and the value of the terminal time $T>0$. In the following, the notation $A_N \lesssim B_N$ means the existence of $C>0$ (possibly depending on $\mathfrak b$ but not $N$) such that $A_N \leq CB_N$ for every $N \geq 1$. 


\subsection{Oracle estimation of $\mu_t(x)$} \label{sec: oracle mu}
 We fix $(t_0,x_0) \in (0,T]\times \R^d$ and implement a variant of the Goldenshluger-Lepski's algorithm \cite{GL08, GL11, GL14} for pointwise estimation.
Pick a discrete set
 $$
 \mathcal H_1^N \subset \big[N^{-1/d}(\log N)^{2/d}, 1\big],
 $$
 of admissible bandwidths such that $\mathrm{Card}(\mathcal H_1^N) \lesssim N$. The algorithm, based on Lepski's principle, requires the family of estimators
$$\Big(\widehat \mu^N_h(t_0,x_0),h\in \mathcal H_1^N\Big)$$
defined in \eqref{eq: def mu hat} and selects an appropriate bandwidth $\widehat h^N$ from data $\mu_{t_0}^N(dx)$. Writing $\{x\}_+ = \max(x,0)$, define
\begin{equation} \label{eq: choix A}
\mathsf A_h^N = \max_{h'\leq h, h' \in \mathcal H_1^N}\Big\{\big(\widehat \mu_{h}^N(t_0,x_0)-\widehat \mu_{h'}^N(t_0,x_0)\big)^2-(\mathsf V_h^N+\mathsf V_{h'}^N)\Big\}_+,
\end{equation}
where
\begin{equation} \label{eq: var correc mu}
\mathsf V^N_h  = \varpi_1 |K|_2^2(\log N) N^{-1}h^{-d},\;\;\varpi_1 >0.
\end{equation}
Let
\begin{equation} \label{eq: def GL band mu}
\widehat h^N \in \mathrm{argmin}_{h \in \mathcal H_1^N} \big(\mathsf A_h^N +\mathsf V_h^N\big).
\end{equation}
The data driven Goldenshluger-Lepski estimator of $\mu_{t_0}(x_0)$ defined by
$$\widehat \mu_{\mathrm{GL}}^N(t_0,x_0) = \widehat \mu_{\widehat h^N}^N(t_0,x_0)$$
is specified by $K$ and $\varpi_1$. 

\begin{rk} \label{rk Glepski}
The choice of the penalty $\mathsf A_h^N$ and the threshold $\mathsf V_h^N$ in \eqref{eq: choix A} and \eqref{eq: var correc mu} are standard in the GL methodology: $\mathsf A_h^N$ is  a kind of proxy for the estimation of the variance of $\widehat \mu_{h}^N(t_0,x_0)$ while $\mathsf V_h^N$ is the exact penalty needed in order to balance the size of the variance of the estimator in $h$, of order $|K|_2^2N^{-1}h^{-d}$, inflated by a logarithmic term $\log N$ and tuned with $\varpi_1>0$. This enables one to control all the stochastic deviation terms. See in particular the proof of Theorem \ref{thm: GL mu}. We also refer to the original sources in Goldenshluger and Lepski \cite{GL08, GL11, GL14}. 
\end{rk}

\subsubsection*{Oracle estimate} We need some notation. Given a kernel $K$, the {\it bias at scale} $h>0$ of $\mu$ at point $(t_0, x_0)$ is defined as   
\begin{equation} \label{eq: bias mu}
\mathcal B_h^N(\mu)(t_0,x_0) = \sup_{h' \leq h, h' \in \mathcal H_1^N}\Big|\int_{\R^d}K_{h'}(x_0-x)\mu_{t_0}(x)dx-\mu_{t_0}(x_0)\Big|.
\end{equation}
We are ready to give the performance of our estimator of $\mu_t(x)$, by means of an oracle inequality.

\begin{thm} \label{thm: GL mu}
Work under Assumptions  \ref{ass: init condition}, \ref{ass: minimal prop sigma} and \ref{ass: basic lip}. Let $(t_0,x_0) \in (0,T] \times \R^d$. The following oracle inequality holds true:
$$
 \E_{\PP^N}\big[\big(\widehat \mu_{\mathrm{GL}}^N(t_0,x_0)-\mu_{t_0}(x_0)\big)^2\big] 
 \lesssim 
 \min_{h \in \mathcal H_1^N} \big(\mathcal B_{h}^N(\mu)\big(t_0,x_0)^2+\mathsf V_h^N\big),
$$
for large enough $N$, up to a constant depending on $(t_0,x_0)$, $|K|_\infty$ and $\mathfrak b$, provided $\widehat \mu_{\mathrm{GL}}^N(t_0,x_0)$ is calibrated with
$\varpi_1 \geq 16\kappa_2^{-1}\kappa_3$,
where $\kappa_2$ is specified in Theorem \ref{thm: concentration} and $\kappa_3$ is a (local) upper bound of $\mu_{t_0}$, see Lemma \ref{lem: loc unif mu above} below.
\end{thm}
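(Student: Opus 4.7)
The plan is to follow the classical Goldenshluger--Lepski template, using the deterministic intermediate $\mu_h(x_0) := \int_{\R^d} K_h(x_0-x)\mu_{t_0}(x)dx$ and relying on Theorem \ref{thm: concentration} applied with $\rho = \delta_{t_0}$ and $\phi(\cdot) = K_h(x_0 - \cdot)$ as the sole probabilistic input.

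First, for an arbitrary $h \in \mathcal H_1^N$, I decompose
\begin{equation*}
\widehat\mu_{\mathrm{GL}}^N(t_0,x_0) - \mu_{t_0}(x_0) = \bigl(\widehat\mu^N_{\widehat h^N} - \widehat\mu^N_h\bigr) + \bigl(\widehat\mu^N_h - \mu_h(x_0)\bigr) + \bigl(\mu_h(x_0) - \mu_{t_0}(x_0)\bigr).
\end{equation*}
The last term is at most $\mathcal B^N_h(\mu)(t_0,x_0)$ by \eqref{eq: bias mu}. For the first, a short case analysis according to whether $\widehat h^N \leq h$ or $\widehat h^N > h$, combined with the optimality inequality $\mathsf A^N_{\widehat h^N} + \mathsf V^N_{\widehat h^N} \leq \mathsf A^N_h + \mathsf V^N_h$ coming from \eqref{eq: def GL band mu}, yields the routine bound $(\widehat\mu^N_{\widehat h^N} - \widehat\mu^N_h)^2 \leq 2(\mathsf A^N_h + \mathsf V^N_h)$.

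Second, the core step is to estimate $\E_{\PP^N}[\mathsf A^N_h]$. Inserting $\mu_h(x_0)$ and $\mu_{h'}(x_0)$ in each $\widehat\mu^N_h - \widehat\mu^N_{h'}$, using $|\mu_h(x_0) - \mu_{h'}(x_0)| \leq 2\mathcal B^N_h(\mu)(t_0,x_0)$ for $h' \leq h$ (a direct consequence of the sup-over-$h'\leq h$ in \eqref{eq: bias mu}), and convexity, I obtain
\begin{equation*}
\mathsf A^N_h \leq 12\,\mathcal B^N_h(\mu)(t_0,x_0)^2 + \bigl\{6(\widehat\mu^N_h - \mu_h(x_0))^2 - \mathsf V^N_h\bigr\}_+ + \sum_{h' \in \mathcal H_1^N}\bigl\{6(\widehat\mu^N_{h'} - \mu_{h'}(x_0))^2 - \mathsf V^N_{h'}\bigr\}_+.
\end{equation*}
Each centred quantity is controlled by Theorem \ref{thm: concentration}: Lemma \ref{lem: loc unif mu above} supplies the local upper bound $\kappa_3$ of $\mu_{t_0}$ near $x_0$, hence $|\phi|^2_{L^2(\mu_{t_0})} \leq \kappa_3 |K|_2^2 h'^{-d}$ and $|\phi|_\infty \leq |K|_\infty h'^{-d}$. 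Integrating the exponential tail from $x^2 \asymp \mathsf V^N_{h'}/6$ upwards gives $\E_{\PP^N}\bigl[\bigl\{6(\widehat\mu^N_{h'} - \mu_{h'})^2 - \mathsf V^N_{h'}\bigr\}_+\bigr] \lesssim N^{-\gamma}$, where the calibration $\varpi_1 \geq 16 \kappa_2^{-1}\kappa_3$ makes $\gamma$ as large as desired; summing over $\mathrm{Card}(\mathcal H_1^N) \lesssim N$ values of $h'$ leaves a residual of order $N^{-\gamma+1}$, negligible in front of $\mathsf V^N_h$.

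Combining these bounds yields $\E_{\PP^N}\bigl[(\widehat\mu_{\mathrm{GL}}^N(t_0,x_0) - \mu_{t_0}(x_0))^2\bigr] \lesssim \mathcal B^N_h(\mu)(t_0,x_0)^2 + \mathsf V^N_h$ for every $h \in \mathcal H_1^N$, and minimising over $h$ delivers the stated oracle inequality. The delicate point I expect is verifying that Theorem \ref{thm: concentration} operates in its sub-Gaussian regime at the relevant deviation scale: at $x \asymp \sqrt{\mathsf V^N_{h'}}$, the linear term $|K|_\infty x\, h'^{-d}$ in the Bernstein denominator must remain dominated by the variance term $\kappa_3|K|_2^2 h'^{-d}$, which is precisely what the floor $h^d \geq N^{-1}(\log N)^2$ built into $\mathcal H_1^N$ ensures. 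Tracking the resulting multiplicative constant through the computation is what forces the explicit calibration $\varpi_1 \geq 16 \kappa_2^{-1}\kappa_3$.
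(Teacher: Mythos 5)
Your proposal follows essentially the same route as the paper's proof: the standard Goldenshluger--Lepski decomposition and the case analysis on $\widehat h^N\lessgtr h$ in Step 1, the control of $\E_{\PP^N}[\mathsf A_h^N]$ by inserting the smoothed means $\mu_h,\mu_{h'}$ and using $|\mu_h-\mu_{h'}|\leq 2\mathcal B_h^N(\mu)$ for $h'\leq h$, and then Theorem \ref{thm: concentration} with $\rho=\delta_{t_0}$, $\phi=K_{h'}(x_0-\cdot)$ together with the local bound $\kappa_3$ from Lemma \ref{lem: loc unif mu above}, the grid floor $Nh^d\geq(\log N)^2$ guaranteeing the sub-Gaussian regime and the cardinality bound $\mathrm{Card}(\mathcal H_1^N)\lesssim N$ absorbing the union over $h'$. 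The only deviation is constant tracking: your three-term splitting produces coefficients $12/6$ where the paper uses a four-term splitting with $8/4$, and the stated calibration $\varpi_1\geq 16\kappa_2^{-1}\kappa_3$ is exactly what makes $\exp\big(-\kappa_2\varpi_1\log N/(8\kappa_3)\big)\leq N^{-2}$ with the factor $4$, so with your looser constants you would need a slightly larger threshold (or the tighter splitting) to justify the precise value of $\varpi_1$ claimed in the theorem.
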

Some remarks are in order:
{\bf 1)} Up to an inessential logarithmic factor, our estimator achieves the optimal bias-variance tradeoff among every possible bandwidth $h \in  \mathcal H_1^N$. {\bf 2)} The requirement $Nh^d \geq (\log N)^2$ for $h \in \mathcal H_1^N$ could be tightened to $Nh^d \geq (\log N)^{1+\epsilon}$ for an arbitrary $\epsilon >0$; this is slightly more stringent than the usual bound $Nh^d \geq \log N$ in the literature \cite{GL08, GL11, GL14}, but this has no consequence for the subsequent minimax results. {\bf 3)} The choice of a pointwise loss function at $(t_0,x_0)$ is inessential here: other integrated norms like $|\cdot|_p$ would work as well, following the general strategies of Lepski's principle. {\bf 4)} The construction of the estimator of $\mu_{t_0}(x_0)$ requires a lower bound on $\varpi_1$ that has to be set prior to the data analysis. The bound we obtain are presumably too large. In practice, $\varpi_1$ has to be tuned by other methods, possibly using data. Such approaches in the context of Lepski's methods have been recently introduced by Lacour {\it et al.} \cite{lacour2017estimator}. This weakness is common to all nonparametric methods that depend on a data-driven bandwidth. 

\subsection{Oracle estimation of $b(t,x,\mu_t)$} \label{sec: oracle b}
Similarly to the estimation of $\mu_t(x)$, we pick a discrete set 
\begin{equation} \label{eq: condition grid 2}
\mathcal H_2^N \subset \big[N^{-1/(d+1)}(\log N)^{2/(d+1)}, (\log N)^{-2} \big] \times \big[N^{-1/(d+1)}(\log N)^{2/(d+1)}, 1\big],
\end{equation}
with
cardinality $\mathrm{Card}\, \mathcal H_2^N \lesssim N$. We assume that $\mathcal H_2^N $ is equipped with some ordering $\preceq$ such that for every $\boldsymbol h, \boldsymbol h' \in \mathcal H_2^N$, we have either $\boldsymbol h \preceq \boldsymbol h'$ or $\boldsymbol h' \preceq \boldsymbol h$.
The construction uses $\widehat \mu_{\mathrm{GL}}^N(t_0,x_0)$, given in addition the family of estimators
$$\big(\widehat \pi_{\boldsymbol h}^N(t_0,x_0), \boldsymbol h\in \mathcal H_2^N\big)$$
defined in \eqref{eq: est pi} and constructed with the kernel $H \otimes K$.  
Define
\begin{equation} \label{eq: choice A bis}
\mathsf A_{\boldsymbol h}^N = \max_{\boldsymbol{h'} \preceq \boldsymbol{h}, \boldsymbol h'\in \mathcal H_2^N}\Big\{\big|\widehat \pi_{\boldsymbol h}^N(t_0,x_0)-\widehat \pi_{\boldsymbol h'}^N(t_0,x_0)\big|^2-(\mathsf V_{\boldsymbol h}^N+\mathsf V_{\boldsymbol h'}^N)\Big\}_+,
\end{equation}
where
\begin{equation} \label{def upper bivariance}
\mathsf V_{\boldsymbol h}^N =  \varpi_2 |H\otimes K|_2^2(\log N)N^{-1}h_1^{-1}h_2^{-d},\;\;\varpi_2 >0.
\end{equation}
Let
$$\widehat {\boldsymbol h}^N\in \text{argmin}_{\boldsymbol h \in \mathcal H_2^N}\big(\mathsf A_{\boldsymbol h}^N+\mathsf V_{\boldsymbol h}^N\big).$$

The data-driven Goldenshluger-Lepski estimator of $b(t_0,x_0,\mu_{t_0})$ is defined as 
\begin{equation} \label{def GL est b}
\widehat b_{\mathrm{GL}}^N(t_0,x_0)  = \widehat b_{\widehat h^N,\widehat {\boldsymbol h}^N}^N(t_0,x_0)_{\varpi_3}  = \frac{\widehat \pi_{\widehat {\boldsymbol h}^N}^N(t_0,x_0)}{\widehat \mu_{\widehat {h}^N}(t_0,x_0)\vee \varpi_3}
\end{equation}
and is specified by $H,K, \varpi_1, \varpi_2$ and the threshold $\varpi_3 >0$ that prevents the estimator to blow-up for small values of $\widehat \mu_{\widehat {h}^N}(t_0,x_0)$.

\begin{rk} The same comments as in Remark \ref{rk Glepski} apply here for the specification of $\mathsf A_{\boldsymbol h}^N$ in \eqref{eq: choice A bis} and $\mathsf V_{\boldsymbol h}^N$ in \eqref{def upper bivariance}, noting that in the anisotropic case, the variance of $\widehat \pi_{\boldsymbol h}^N(t_0,x_0)$ is now of order $|H \otimes K|_2^2N^{-1}h_1^{-1}h_2^{-d}$. 
\end{rk}

\subsubsection*{Oracle estimates} 
%
Given a kernel $H \otimes K$, the bias at scale $\boldsymbol h$ of $\pi = b\mu $ at point $(t_0,x_0)$ is defined as
\begin{equation} \label{eq: bias pi}
\mathcal B_{\boldsymbol h}^N(\pi)(t_0,x_0) =  \sup_{\boldsymbol h' \preceq \boldsymbol h, \boldsymbol h' \in \mathcal H_2^N}\Big|
\int_{[0,T] \times \R^d} (H \otimes K)_{\boldsymbol h'}(t_0-t,x_0-x)\pi(t,x)dxdt-\pi(t_0,x_0)\Big|.
\end{equation}
We are ready to give an oracle bound for the estimation of $b(t,x,\mu_t)$. 
\begin{thm} \label{thm: oracle b}
Work under Assumptions \ref{ass: init condition}, \ref{ass: minimal prop sigma} and \ref{ass: basic lip}. Let $(t_0,x_0) \in (0,T) \times \R^d$. The following oracle inequality holds
$$
 \E_{\PP^N}\big[\big|\widehat b_{\mathrm{GL}}^N(t_0,x_0)-  b(t_0,x_0,\mu_{t_0})\big|^2\big] 
 \lesssim  
 \min_{h \in \mathcal H_1^N} \big(\mathcal B_{h}^N(\mu)(t_0,x_0)^2+  \mathsf V_{h}^N\big)+\min_{\boldsymbol h \in \mathcal H_2^N} \big(\mathcal B_{\boldsymbol h}^N(\pi)(t_0,x_0)^2+{\mathsf V}_{\boldsymbol h}^N\big),
$$
for large enough $N$, up to a constant depending on  $(t_0,x_0)$, $|H\otimes K|_\infty$, and $\mathfrak b$, provided 
$$\varpi_2 \geq 12d \kappa_3\max(12T\kappa_2^{-1}\kappa_5^2, 25|\mathrm{Tr}(c)|_\infty)\;\;\text{and}\;\;\varpi_3 \leq \kappa_4,$$
where $\kappa_2$ is defined in Theorem \ref{thm: concentration}, and $\kappa_3, \kappa_4, \kappa_5$ are (local) upper or lower bounds on $\mu$ and $b$ defined in Lemma \ref{lem: loc unif mu above} below.
\end{thm}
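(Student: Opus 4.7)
The strategy is to reduce the problem to two separate oracle inequalities, one for $\widehat\mu_{\mathrm{GL}}^N$ (already available as Theorem~\ref{thm: GL mu}) and one for $\widehat\pi_{\widehat{\boldsymbol h}^N}^N$. Using $b(t_0,x_0,\mu_{t_0}) = \pi(t_0,x_0)/\mu_{t_0}(x_0)$ and writing $D = \widehat\mu_{\widehat h^N}^N\vee\varpi_3$, I decompose
\begin{align*}
\widehat b_{\mathrm{GL}}^N - b(t_0,x_0,\mu_{t_0})
= \frac{\widehat\pi_{\widehat{\boldsymbol h}^N}^N - \pi(t_0,x_0)}{D}
+ \frac{\pi(t_0,x_0)\bigl(\mu_{t_0}(x_0)-D\bigr)}{\mu_{t_0}(x_0)\,D}.
\end{align*}
The calibration $\varpi_3 \leq \kappa_4 \leq \mu_{t_0}(x_0)$ from Lemma~\ref{lem: loc unif mu above} forces $D \geq \varpi_3$ deterministically and $|\mu_{t_0}(x_0)-D|\leq |\mu_{t_0}(x_0)-\widehat\mu_{\widehat h^N}^N|$, so after squaring and taking expectation
$$\E_{\PP^N}|\widehat b_{\mathrm{GL}}^N - b|^2 \lesssim \E_{\PP^N}|\widehat\pi_{\widehat{\boldsymbol h}^N}^N - \pi|^2 + \E_{\PP^N}(\widehat\mu_{\widehat h^N}^N - \mu_{t_0}(x_0))^2,$$
with a constant depending on $\varpi_3$, $\kappa_4$ and a local bound for $|b(t_0,x_0,\mu_{t_0})|$. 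The second term is controlled by Theorem~\ref{thm: GL mu} and yields the $\min_{h\in\mathcal H_1^N}(\mathcal B_h^N(\mu)^2+\mathsf V_h^N)$ part of the claimed bound.

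\textbf{A Bernstein bound for $\widehat\pi_{\boldsymbol h}^N$.} The heart of the proof is to establish a GL oracle inequality
$$\E_{\PP^N}|\widehat\pi_{\widehat{\boldsymbol h}^N}^N - \pi|^2 \lesssim \min_{\boldsymbol h\in\mathcal H_2^N}\bigl(\mathcal B_{\boldsymbol h}^N(\pi)^2+\mathsf V_{\boldsymbol h}^N\bigr),$$
by replicating the proof of Theorem~\ref{thm: GL mu} with $\widehat\pi_{\boldsymbol h}^N$ in place of $\widehat\mu_h^N$. The only new probabilistic ingredient required is a Bernstein deviation inequality for $\widehat\pi_{\boldsymbol h}^N - \E_{\PP^N}\widehat\pi_{\boldsymbol h}^N$ of the same form as Theorem~\ref{thm: concentration}. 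Applying It\^o's formula along \eqref{eq:diff basique} componentwise to $\phi=(H\otimes K)_{\boldsymbol h}(t_0-\cdot,x_0-\cdot)$ gives
$$\widehat\pi_{\boldsymbol h}^N = \tfrac{1}{N}\sum_i\int_0^T\phi(t,X_t^i)b(t,X_t^i,\mu_t^N)\,dt + \tfrac{1}{N}\sum_i\int_0^T\phi(t,X_t^i)\sigma(t,X_t^i)\,dB_t^i.$$
The drift piece falls under Theorem~\ref{thm: concentration} applied to $(t,x)\mapsto\phi(t,x)b(t,x,\mu_t)$ with weight $\rho(dt)=dt$, the mean-field discrepancy $b(\cdot,\cdot,\mu_t^N)-b(\cdot,\cdot,\mu_t)$ being absorbed by the Lipschitz regularity of $b$ and the $\mathcal W_1(\mu_t^N,\mu_t)$-controls already derived en route to Theorem~\ref{thm: concentration}; this accounts for the $12T\kappa_2^{-1}\kappa_5^2$ term in the lower bound on $\varpi_2$. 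The martingale piece is a continuous martingale of aggregate bracket bounded by $N^{-1}|\mathrm{Tr}(c)|_\infty |\phi|_{L^2(\nu)}^2$, so the classical exponential supermartingale $\exp(\lambda M - \tfrac{\lambda^2}{2}\langle M\rangle)$ combined with a bracket truncation produces a Bernstein tail in $|\phi|_{L^2(\nu)}^2$ and $|\phi|_\infty$; this accounts for the $25|\mathrm{Tr}(c)|_\infty$ term. Both contributions combine into $\PP^N(|\widehat\pi_{\boldsymbol h}^N - \E\widehat\pi_{\boldsymbol h}^N|\geq x)\lesssim\exp(-cNx^2/(|\phi|_{L^2(\nu)}^2+|\phi|_\infty x))$, which matches the variance proxy $\mathsf V_{\boldsymbol h}^N \propto |H\otimes K|_2^2(\log N)/(Nh_1 h_2^d)$ as soon as $\varpi_2$ exceeds the stated threshold.

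\textbf{GL argument and main obstacle.} Granted the Bernstein bound, the remainder of the analysis is structurally identical to the one underlying Theorem~\ref{thm: GL mu}: for an arbitrary competitor $\boldsymbol h\in\mathcal H_2^N$, split
$$\widehat\pi_{\widehat{\boldsymbol h}^N}^N - \pi = \bigl(\widehat\pi_{\widehat{\boldsymbol h}^N}^N - \widehat\pi_{\boldsymbol h\wedge\widehat{\boldsymbol h}^N}^N\bigr) + \bigl(\widehat\pi_{\boldsymbol h\wedge\widehat{\boldsymbol h}^N}^N - \E\widehat\pi_{\boldsymbol h}^N\bigr) + \bigl(\E\widehat\pi_{\boldsymbol h}^N - \pi\bigr),$$
controlled respectively by $\mathsf A_{\widehat{\boldsymbol h}^N}^N + \mathsf V_{\widehat{\boldsymbol h}^N}^N + \mathsf V_{\boldsymbol h}^N$ (by the selection rule and the definition of $\mathsf A$), by the Bernstein deviation integrated against $\mathsf V_{\boldsymbol h}^N$ (the $\log N$ factor absorbing the cardinality $\lesssim N$ of $\mathcal H_2^N$), and by the bias $\mathcal B_{\boldsymbol h}^N(\pi)$. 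Minimising over $\boldsymbol h$ and combining with the $\mu$-contribution from the first paragraph yields the theorem. The main obstacle is the Bernstein bound for the martingale term: the diffusions $X^i$ are coupled through $\mu_t^N$, so the individual stochastic integrals are correlated, but their sum is still a continuous martingale with respect to the common filtration and the exponential supermartingale argument applies after a careful bracket estimate; tracking the numerical constants, particularly the interaction between $\kappa_3,\kappa_4,\kappa_5$ and $|\mathrm{Tr}(c)|_\infty$, is precisely what pins down the explicit thresholds imposed on $\varpi_2$ and $\varpi_3$.
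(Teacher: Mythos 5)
Your skeleton matches the paper's: the quotient decomposition with $\varpi_3\leq\kappa_4$ is exactly the paper's Step 1, the splitting of $\widehat\pi^N_{\boldsymbol h}-\pi$ into a density-fluctuation term, a mean-field discrepancy term and a martingale term is the paper's Lemma \ref{lem: dev b}, and your treatment of the martingale piece (exponential supermartingale plus a truncation of the random bracket, the bracket fluctuation being controlled by another application of Theorem \ref{thm: concentration}) is essentially what produces the $25|\mathrm{Tr}(c)|_\infty$ part of the threshold in the paper. The gap is in the sentence where you claim the discrepancy $\xi^N_t=b(\cdot,\cdot,\mu^N_t)-b(\cdot,\cdot,\mu_t)$ is ``absorbed by the Lipschitz regularity of $b$ and the $\mathcal W_1(\mu^N_t,\mu_t)$-controls already derived en route to Theorem \ref{thm: concentration}''. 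Under Assumption \ref{ass: basic lip} (ii)--(iii) in dimension $d\geq 2$, $\mathcal W_1(\mu^N_t,\mu_t)$ only decays like $N^{-1/d}$ (Fournier--Guillin), which is far too slow for the term $\int_0^T\int\phi\,\xi^N_t\,\mu^N_t(dx)dt$ to be absorbed into $\mathsf V^N_{\boldsymbol h}$. The paper has to prove a dedicated deviation bound, Lemma \ref{lem: fluctuation xi}, giving $\PP^N(|\xi^N_t(X^N_t)|\geq u)\lesssim\exp(-\kappa_7 Nu^2/(1+N^{1/2}u))$, which requires the $k$-fold linear functional derivative expansion, the $U$-statistic moment bounds of Lemma \ref{lem rosenthal}, the Birg\'e--Massart form of Bernstein's inequality, and the Girsanov transfer back to $\PP^N$ — a genuinely new step, not a byproduct of Theorem \ref{thm: concentration}.

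Moreover, even granting that lemma, your claimed clean Bernstein tail for $\widehat\pi^N_{\boldsymbol h}-\E_{\PP^N}\widehat\pi^N_{\boldsymbol h}$ in terms of $|\phi|^2_{L^2(\nu)}$ and $|\phi|_\infty$ does not follow: at the relevant deviation level $x\sim(\mathsf V^N_{\boldsymbol h})^{1/2}\sim N^{-1/2}(\log N)^{1/2}(h_1h_2^d)^{-1/2}$, Lemma \ref{lem: fluctuation xi} applied linearly only yields smallness of order $\exp(-c(\log N)^{1/2})$, which cannot survive the union bound over the $\lesssim N$ bandwidths needed to make the Goldenshluger--Lepski excess term of order $N^{-2}$. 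This is exactly the ``delicate term $II$'' the paper flags: it is handled by exploiting the quadratic structure of the term through a Cauchy--Schwarz inequality with respect to the measure $|\phi(t,x)|\mu^N_t(dx)dt$ on a high-probability event $\mathcal B^N_\tau$, and — crucially — by imposing the grid constraint $h_1\leq(\log N)^{-2}$ in \eqref{eq: condition grid 2}, so that the resulting exponents behave like $h_1^{-1/2}(\log N)^{1/2}\geq(\log N)^{3/2}$ and beat $N^{-2}$. Your proposal never uses this constraint and offers no substitute, so the Lepski step for $\widehat\pi$ is not actually closed; relatedly, you center at $\E_{\PP^N}\widehat\pi^N_{\boldsymbol h}$ rather than at the smoothed truth $\pi_{\boldsymbol h}(t_0,x_0)$, and bridging the two again requires controlling the expectation of the same discrepancy term, so it cannot be bypassed.
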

Some remarks: {\bf 1)} The same remarks as {\bf 1)}, {\bf 2)}, {\bf 3} and {\bf 4)} after Theorem \ref{thm: GL mu} are in order. This includes the calibration of $\varpi_2, \varpi_3$ and the requirement $Nh_1h_2^d \geq (\log N)^2$ for $h \in \mathcal H_2^N$ that could be tightened to $Nh_1h_2^d \geq (\log N)^{1+\epsilon}$ for an arbitrary $\epsilon >0$. However, the requirement $h_1 \leq (\log N)^{-2}$ is a bit unusual and necessary for technical reason: it enables us to  manage the delicate term $II$ in the proof. Fortunately, this has no consequence for the subsequent minimax results, since we always look for oracle bandwidths of the form $N^{-\epsilon}$ that are much smaller than $(\log N)^{-2}$. {\bf 2)} The estimator $\widehat b_{\mathrm{GL}}^N$ is a quotient estimator that estimates the ratio of $\pi(t,x) = b(t,x,\mu_t)\mu_t(x)$ and $\mu_t(x)$, very much in the sense of a Nadaraya-Watson (NW) type estimator in regression \cite{NADARAYA}. Its performance is similar to the worst performance of the estimation of the product $\pi$ and $\mu$. However, the smoothness of $\mu$ is usually no worse than the smoothness of $b$ and we do not lose in terms of approximation results, see Section \ref{sec: adaptive estimation} and Proposition \ref{prop : reg Holder} below for the formulation of a minimax theory in this setting. 

\section{Adaptive minimax estimation} \label{sec: adaptive estimation}
\subsection{Anisotropic H\"older smoothness classes for McKean-Vlasov models}
\begin{defi}  \label{def: holder space}
Let $x_0 \in \R^d$ and $\mathcal U$ a neighbourhood of  $x_0$.   
We say that $f:\R^d \rightarrow \R$ belongs to $\mathcal H^{\alpha}(x_0)$ with $\alpha >0$
if for every $x,y \in \mathcal U$
\begin{equation} \label{def holder}
|D^s f(y)- D^s f(x)| \leq C|y-x|^{\alpha - \lfloor \alpha \rfloor}
\end{equation}
for any $s$ such that $|s| \leq \lfloor \alpha \rfloor$, the largest integer {\it strictly smaller} than $\alpha$; $s \in \mathbb N^d$ is a multi-index with $|s| = s_1+\ldots+s_d$ and $D^s  = \frac{\partial^{|s|}}{\partial_1^{s_1}\ldots \partial_d^{s_d}}$.

\end{defi}
The definition depends on $x_0$ via $\mathcal U$ but this is further omitted in the notation for simplicity. We obtain a semi-norm by setting 
$$|f|_{\mathcal H^\alpha(x_0)} = 
\sup_{x \in \mathcal U}|f(x)|+C(f),$$
where $C(f)$ is the smallest constant $C$ for which \eqref{def holder} holds. The extension of Definition \ref{def: holder space} for $\R^d$-valued functions is straightforward by considering coordinate functions. For time-varying functions defined on $(0,T)$ we have the
\begin{defi} \label{def anisotropic space} Let $(t_0,x_0) \in (0,T)\times \R^d$ and $\alpha, \beta >0$. 
The function $f: (0,T) \times \R^d \rightarrow \R$ belongs to the anisotropic H\"older class $\mathcal H^{\alpha,\beta}(t_0,x_0)$ if
\begin{equation} \label{eq: def aniso holder}
|f|_{\mathcal H^{\alpha,\beta}(t_0,x_0)} = |f(\cdot,x_0)|_{\mathcal H^{\alpha}(t_0)}+ |f(t_0,\cdot)|_{\mathcal H^{\beta}(x_0)} < \infty.
\end{equation}
\end{defi}
Again, the extension to $\R^d$-valued functions is straightforward: a mapping $f = (f^k)_{1 \leq k \leq d} : (0,T) \times \R^d \rightarrow \R^d$ belongs to $\mathcal H^{\alpha,\beta}(t_0,x_0)$ if $f^k \in \mathcal H^{\alpha,\beta}(t_0,x_0)$ for every $k=1,\ldots, d$.\\

We model H\"older smoothness classes for the density function $\mu_t(x)$ and the drift $b(t,x,\mu_t)$. The McKean-Vlasov model \eqref{eq:diff basique} is parametrised by $(b,\sigma, \mu_0)$, or rather $(b,c,\mu_0)$, with $c = \sigma \sigma^\top$. We denote by $\mathcal P = \mathcal P(\mathfrak b)$ the class of $(b,c,\mu_0)$ satisfying Assumptions \ref{ass: init condition}, \ref{ass: minimal prop sigma} and \ref{ass: basic lip} with model parameter $\mathfrak b$. We let 
$$(b,c,\mu_0) \mapsto \mu = \mathcal S(b,c,\mu_0) $$
denote the solution (or forward) map of \eqref{eq: mckv approx}.
\begin{defi}
Let $\alpha,\beta >0$. The anisotropic  H\"older  class $\mathcal S^{\alpha,\beta}(t_0,x_0)$ to the solution of \eqref{eq: mckv approx} is defined by
$$\mathcal S^{\alpha,\beta}(t_0,x_0) = \Big\{(b,c,\mu_0) \in \mathcal P,\;\;\mu = \mathcal S(b,c,\mu_0) \in \mathcal H^{\alpha,\beta}(t_0,x_0)\Big\}.$$  
\end{defi}

Before establishing minimax rates of convergence, we briefly investigate how rich is the class $\mathcal S^{\alpha,\beta}(t_0,x_0)$. {\bf 1)}
If $b(t,x,\mu) = b(t,x)$ does not involve an interaction, then we have explicit formulas for $\mu$ in some cases when $c$ is regular, see {\it e.g.} Genon-Catalot and Jacod \cite{GCJ94} and the formulas become relatively tractable in dimension $d=1$, especially when $b(t,x) = b(x)$, $c(t,x)=c(x)$ and $\mu_0$ is the invariant distribution of the diffusion process $X_t^i$ provided it exists. In that case, one can construct $\mu \in \mathcal H^{\alpha,\beta}(t_0,x_0)$ with arbitrary $\alpha, \beta >0$ for specific choices of $c$ and $b$ thanks to Feller's classification of scalar diffusions (see {\it e.g.} Revuz and Yor \cite{RY99}). 
{\bf 2)} For a non-trivial representation of $b(t,x,\mu)$ as in the Vlasov model, we have the following result, that shows how versatile the classes $\mathcal H^{\alpha,\beta}(t_0,x_0)$ can be.

\begin{prop} \label{prop : reg Holder}
Let $c(t,x) = \tfrac{1}{2}\sigma^2 \mathrm{Id}$ with $\sigma >0$ and $b(t,x,\mu) = b(x,\mu) = F \star \mu(x)+G(x)$, with $F $ having compact support, $\mu_0 \in \mathcal P_1$ with a continuous bounded density satisfying Assumption \ref{ass: init condition} and
$$|G|_{{\mathcal H}^{\beta}}+|F|_{{\mathcal H}^{\beta'}} + |\mu_0|_{{\mathcal H}^{\beta''}} <\infty,$$
for some $\beta, \beta' >1$ and $\beta'' >0$ (and $\beta$ non-integer for technical reason). Here, $\mathcal H^{\beta}$ denotes the global H\"older space (obtained when taking $\mathcal U = \R^d$ in Definition \ref{def: holder space}). Then, for every $(t_0,x_0) \in (0,T) \times \R^d$, we have $\mu \in \mathcal H^{\alpha,\beta+1}(t_0,x_0)$  and $b(\cdot,\cdot,\mu_\cdot) \in \mathcal H^{\alpha, \beta}(t_0,x_0)$ with $\alpha = (\beta +1)/2$.

\end{prop}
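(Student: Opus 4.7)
The plan is to recast the McKean--Vlasov PDE as a linear parabolic equation in divergence form whose coefficients depend on $\mu$ only through a convolution, and then to bootstrap spatial smoothness through interior parabolic Schauder estimates. Write
\begin{equation*}
\partial_t \mu_t - \tfrac{\sigma^2}{2}\Delta \mu_t + \nabla \cdot \bigl(\mu_t\, g(t,\cdot)\bigr) = 0, \qquad g(t,x) := \nabla V(x)+(\nabla W \star \mu_t)(x).
\end{equation*}
Two structural features drive the argument. First, $\nabla W$ is compactly supported and $\beta'$-H\"older, so $\nabla W \star \mu_t$ is always at least $\beta'$-H\"older in space irrespective of the regularity of $\mu_t$, and it inherits any higher spatial smoothness of $\mu_t$ by moving derivatives onto the $\mu_t$ factor (with the analogous statement for time regularity). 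Second, only local regularity at an interior point $(t_0,x_0)$ with $t_0>0$ is sought, so interior parabolic estimates suffice and the weak initial regularity $\mu_0 \in \mathcal{H}^{\beta''}$ enters only to guarantee a bounded continuous density at time zero.

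First I would establish that $\mu$ is bounded and mildly H\"older on a parabolic neighbourhood of $(t_0,x_0)$. Under our assumptions $b(t,x,\mu_t)$ is globally bounded and Lipschitz in $x$ (using $\nabla V \in \mathcal{H}^\beta$ with $\beta>1$, $\nabla W \in \mathcal{H}^{\beta'}$ with $\beta'>1$ compactly supported, and the unit mass of $\mu_t$), and the diffusion is constant and non-degenerate, so Aronson-type two-sided heat-kernel bounds combined with De Giorgi--Nash--Moser deliver a crude starting regularity $\mu \in \mathcal{H}^{\gamma/2,\gamma}(t_0,x_0)$ for some $\gamma>0$.

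The core of the argument is then a finite bootstrap. Given $\mu \in \mathcal{H}^{s/2,s}(t_0,x_0)$ for some $s>0$, the field $g$ is locally of spatial H\"older class $\min\bigl(\beta,\max(\beta',s)\bigr)$ near $x_0$: the convolution $\nabla W \star \mu_t$ is locally $\mathcal{H}^{\max(\beta',s)}$ by differentiating under the integral sign and using the compact support of $\nabla W$, and the time H\"older regularity of $g$ equals that of $\mu$ through the same convolution. Interior parabolic Schauder estimates applied to the linear equation displayed above then upgrade $\mu$ to $\mathcal{H}^{s'/2,s'}(t_0,x_0)$ with $s':=1+\min\bigl(\beta,\max(\beta',s)\bigr)$. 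Iterating, $s$ saturates at $\beta+1$ in finitely many passes, yielding $\mu \in \mathcal{H}^{(\beta+1)/2,\beta+1}(t_0,x_0)$; the non-integrality of $\beta$ sidesteps the borderline cases of Schauder theory, and when $\beta \geq 2$ one differentiates the equation in space and applies Schauder to the derivatives of $\mu$, whose coefficients inherit the corresponding derivatives of $g$.

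Finally, since $b(\cdot,\cdot,\mu_\cdot)\equiv g$, its spatial H\"older exponent at $(t_0,x_0)$ equals $\beta$, saturated by $\nabla V$ because $\nabla W \star \mu_\cdot \in \mathcal{H}^{\beta+1}$ locally is strictly smoother, and its time H\"older exponent is inherited from that of $\mu$ via the convolution, namely $\alpha=(\beta+1)/2$. I expect the main obstacle to be the careful bookkeeping in the bootstrap when $\beta'<\beta$: one must track the recursion $s\mapsto 1+\min\bigl(\beta,\max(\beta',s)\bigr)$ and verify that it reaches $\beta+1$ in finitely many steps (elementary, since each pass strictly increases $s$ until saturation), together with handling the divergence-form reformulation and the iterated differentiation needed when $\beta \geq 2$.
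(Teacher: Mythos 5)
Your proposal is correct at the level of the paper's own (sketched) argument and follows essentially the same route: first get a rough interior H\"older regularity for $\mu$, then freeze the nonlinearity so that $\mu$ solves a linear parabolic equation whose coefficient $\nabla V+\nabla W\star\mu_t$ inherits H\"older regularity from $V$, $W$ and the current regularity of $\mu$, and bootstrap through interior parabolic Schauder estimates, differentiating the equation to climb past exponent $2$ when $\beta\geq 2$. The paper obtains the initial regularity (for every exponent below $1$) from Fokker--Planck/Lyapunov-function results of Bogachev et al. and invokes Angiuli--Lorenzi and Krylov for the Schauder steps on the equation written in non-divergence form, whereas you start from De Giorgi--Nash--Moser/Aronson bounds and keep divergence form; these are presentational differences rather than a genuinely different argument.
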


The proof relies on classical estimates for parabolic equations, see {\it e.g.} the textbook by Bogatchev {\it et al.} \cite{BoKry}. See also M\'el\'eard and Jourdain \cite{MJ98}  for analogous results. It is sketched in Appendix. 
\ref{proof : reg Holder}.

\subsection{Minimax adaptive estimation of $\mu_t(x)$} For $\alpha,\beta, L >0$, we set
\begin{equation} \label{eq: def aniso mu}
\mathcal S^{\alpha, \beta}_L(t_0,x_0) = \Big\{(b,c,\mu_0) \in \mathcal P,\;|\mathcal S(b,c,\mu_0)|_{\mathcal H^{\alpha,\beta}(t_0,x_0)} \leq L\Big\},
\end{equation}
that shall serve as a smoothness model for the unknown $\mu$, where the semi-norm $|\cdot|_{\mathcal H^{\alpha,\beta}(t_0,x_0)}$ is defined in \eqref{eq: def aniso holder}.\\

Since the estimator $\widehat \mu_{\mathrm{GL}}^N(t_0,x_0)$ is built on $\mu_{t_0}^N$ solely and not the whole process $(\mu_t^N)_{0 \leq t \leq T}$, we study minimax rates of convergence in restriction to the experiment generated by $\mu_{t_0}^N$. We have the following  adaptive upper bound and accompanying lower bound for estimating $\mu$: 
\begin{thm} \label{thm minimax mu}
Work under Assumptions  \ref{ass: init condition}, \ref{ass: minimal prop sigma} and \ref{ass: basic lip}. Let $\widehat \mu_{\mathrm{GL}}^N(t_0,x_0)$ be specified with a kernel $K$ of order $\ell \geq 1$ as defined in \eqref{eq: def order}. 
For every $(t_0,x_0) \in (0,T) \times \R^d$, we have, 
\begin{equation} \label{eq: UB mu}
\sup_{(b,c,\mu_0)}\big(\E_{\PP^N}\big[\big(\widehat \mu_{\mathrm{GL}}^N(t_0,x_0)-\mu_{t_0}(x_0)\big)^2\big]\big)^{1/2} \lesssim \Big(\frac{\log N}{N}\Big)^{\beta\wedge \ell/(2\beta\wedge \ell+d)}
\end{equation} 
for large enough $N$, up to a constant that depends on $\mathfrak b$, $K$, $\beta$ and $L$ only. Moreover
\begin{equation} \label{eq: LB mu}
\inf_{\widehat \mu}\sup_{(b,c,\mu_0)}\E_{\PP^N}\big[|\widehat \mu-\mu_{t_0}(x_0)|\big] \gtrsim N^{-\beta/(2\beta+d)}
\end{equation} 
for large enough $N$. The infimum in \eqref{eq: LB mu} is taken over all estimators constructed with $\mu_{t_0}^N$. The supremum in \eqref{eq: UB mu} and \eqref{eq: LB mu} is taken over 
$\mathcal S^{\alpha, \beta}_L(t_0,x_0)$ for arbitrary $\alpha,\beta,L>0$.
\end{thm}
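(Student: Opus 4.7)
The upper bound is a direct corollary of the oracle inequality of Theorem \ref{thm: GL mu} combined with the standard bias estimate for H\"older functions. The lower bound is obtained via Le Cam's two-hypothesis method; the main difficulty lies in realizing a classical bump perturbation as the time-$t_0$ marginal of a valid McKean-Vlasov model within the class $\mathcal P$.

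\textbf{Upper bound.} For $(b,c,\mu_0) \in \mathcal S^{\alpha,\beta}_L(t_0,x_0)$, the solution $\mu_{t_0}$ lies in $\mathcal H^\beta(x_0)$ with semi-norm at most $L$. A standard Taylor expansion around $x_0$, together with the order-$\ell$ vanishing moments of $K$ (see \eqref{eq: def order}), yields uniformly in $h \in \mathcal H_1^N$ small enough
$$
\mathcal B_h^N(\mu)(t_0,x_0) \leq C_{K,L}\, h^{\beta \wedge \ell}.
$$
Inserting this bound into the oracle inequality of Theorem \ref{thm: GL mu} gives
$$
\sup_{(b,c,\mu_0) \in \mathcal S^{\alpha,\beta}_L(t_0,x_0)} \E_{\PP^N}\big[(\widehat \mu_{\mathrm{GL}}^N(t_0,x_0)-\mu_{t_0}(x_0))^2\big] \lesssim \min_{h \in \mathcal H_1^N} \Big(h^{2(\beta \wedge \ell)} + \frac{\log N}{Nh^d}\Big).
$$
The classical bias-variance trade-off is attained at $h^\star \sim (\log N/N)^{1/(2(\beta\wedge \ell)+d)}$, which belongs to $\mathcal H_1^N$ for $N$ large enough provided the grid is chosen geometrically (e.g.\ dyadic) between its prescribed endpoints, delivering \eqref{eq: UB mu}.

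\textbf{Lower bound.} We apply Le Cam's two-hypothesis method. Restricting to drifts with no measure dependence makes the variables $(X_{t_0}^i)_{1 \leq i \leq N}$ i.i.d.\ under $\PP^N$ with common law $\mu_{t_0}$. It thus suffices to exhibit two models $(b^{(j)},c^{(j)},\mu_0^{(j)}) \in \mathcal S^{\alpha,\beta}_L(t_0,x_0)$, $j \in \{0,1\}$, together with constants $c_*>0$ and $C_0 <2$ such that
$$
\big|\mu_{t_0}^{(0)}(x_0)-\mu_{t_0}^{(1)}(x_0)\big| \geq c_* N^{-\beta/(2\beta+d)} \quad \text{and} \quad N\, \chi^2\big(\mu_{t_0}^{(0)}, \mu_{t_0}^{(1)}\big) \leq C_0.
$$
Let $\phi$ be a smooth reference density satisfying Assumption \ref{ass: init condition} with $\phi(x_0)>0$, and $K_0$ be a smooth compactly-supported function with $K_0(0) \neq 0$ and vanishing moments up to order $\lfloor \beta \rfloor$. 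For $h = c_1 N^{-1/(2\beta+d)}$ with $c_1$ small enough, set
$$
\rho_0 = \phi, \qquad \rho_1(x) = \phi(x) + h^\beta K_0\big(h^{-1}(x-x_0)\big).
$$
Both are positive densities, and a rescaling computation shows $|\rho_j|_{\mathcal H^\beta(x_0)} \leq L$ uniformly in $h$. The separation at $x_0$ equals $K_0(0)\,h^\beta \sim N^{-\beta/(2\beta+d)}$, while $\chi^2(\rho_0,\rho_1) \lesssim h^{2\beta + d}/\inf_{\mathcal U}\phi \lesssim N^{-1}$. To realize these densities as $\mu_{t_0}^{(j)}$ of valid models, we take $\sigma = I$, $\mu_0^{(j)}=\rho_j$, and $b^{(j)}(x) = \tfrac{1}{2}\nabla \log \rho_j(x)$, so that $\rho_j$ is stationary for the associated Fokker-Planck equation (reversible diffusion) and $\mu_{t_0}^{(j)} = \rho_j$. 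Le Cam's inequality then produces \eqref{eq: LB mu}.

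\textbf{Main obstacle.} The critical point is the uniform Lipschitz regularity of $b^{(j)}$ required by Assumption \ref{ass: basic lip}(i), via $\nabla b^{(j)} = \tfrac{1}{2}\nabla^2 \log \rho_j$. This is bounded independently of $h$ when $\beta \geq 2$, but grows like $h^{\beta-2}$ otherwise. For $\beta < 2$ the reversible ansatz must be replaced by a time-inhomogeneous drift $b^{(j)}(t,x)$ that smoothly interpolates between a well-conditioned initial density $\mu_0^{(j)}$ and $\rho_j$ over $[0,t_0]$, the drift being recovered from the continuity equation and bounded in $C^{0,1}$ uniformly in $h$ thanks to the regularity of the interpolation. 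The local $\mathcal H^\beta(x_0)$ semi-norm at time $t_0$ and the chi-square bound are unaffected by this modification, so the same rate is obtained in all regimes.
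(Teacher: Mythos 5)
Your proposal is correct and follows essentially the same route as the paper: the upper bound plugs the H\"older bias estimate into the oracle inequality of Theorem \ref{thm: GL mu} and optimizes over a geometric grid, and the lower bound is the same two-point Le Cam argument with interaction-free reversible diffusions whose stationary density carries a localized bump at scale $N^{-1/(2\beta+d)}$ (your drift $\tfrac{1}{2}\nabla \log \rho_j$ is exactly the paper's $-\nabla V_j$ with $V_j=-\tfrac{1}{2}\log \rho_j$, and your chi-square bound plays the role of the paper's Pinsker/KL computation, which exploits $\int\psi=0$). The uniform-Lipschitz caveat you flag for $\beta<2$ is not resolved in the paper either---its potential perturbation has Hessian of order $N^{(2-\beta)/(2\beta+d)}$---so your remark (and sketched time-inhomogeneous fix) goes slightly beyond the paper's own level of detail rather than indicating a defect specific to your argument.
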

Some remarks: {\bf 1)} We obtain the (nearly) optimal rate of convergence $N^{-\beta/(2\beta+d)}$ for estimating $\mu_{t_0}(x_0)$ as a function of the $d$-dimensional state variable $x_0$ for fixed $t_0$. The result obviously does not depend on the smoothness of $t \mapsto \mu_{t}(x_0)$ and is constructed from data $(X_{t_0}^1,\ldots, X_{t_0}^N)$ for fixed $t_0$. {\bf 2)} The extra logarithmic payement is unavoidable for pointwise estimation, as a result of the classical Lepski-Low phenomenon \cite{LEPSKI, LOW}. 
{\bf 3)} Although the result is stated for arbitrary $\alpha,\beta >0$, the mapping $(t,x) \mapsto \mu_t(x)$ is locally smooth; there is no contradiction and result must be understood as bounds that are valid over smooth functions $\mu$ having prescribed $\mathcal S^{\alpha, \beta}(t_0,x_0)$ semi-norms.
{\bf 4)} Our concentration result Theorem \ref{thm: concentration} for the fluctuation of $\mu_t^N$ around $\mu_t$ enables us to improve on the estimation result in Proposition 2.1 of Bolley {\it et al.}\,\cite{bolley2007quantitative} that achieves the (non-adaptive) suboptimal rate $N^{-\beta/(2\beta + 2d + 2)}$. This is due to the fact that Bolley {\it et al.} \cite{bolley2007quantitative} rely on controlling the fluctuations of $\mu_t^N$ around $\mu_t$ in Wasserstein distance, and therefore have to accomodate a dimension effect that we do not have here. 

\subsection{Minimax adaptive estimation of $b(t,x,\mu_t)$}
For $\alpha, \beta>0$ and $L>0$, in analogy to the class $\mathcal S^{\alpha,\beta}_L(t_0,x_0)$ defined in \eqref{eq: def aniso mu} above, we model the smoothness of the function $(t,x) \mapsto b(t,x,\mu_t)$ via the class
$$\mathcal D^{\alpha,\beta}_L(t_0,x_0) =  \big\{(b,c,\mu_0) \in \mathcal P,\;|b(\cdot,\cdot,\mathcal S(b,c,\mu_0)_.)|_{\mathcal H^{\alpha,\beta}(t_0,x_0)} \leq L\big\}.$$
Define the effective anisotropic smoothness $s_d(\alpha,\beta)$ by
$$\frac{1}{s_d(\alpha,\beta)} = \frac{1}{\alpha}+\frac{d}{\beta}.$$
 We have the following adaptive upper bound and accompanying lower bound for estimating the drift $b$:

\begin{thm} \label{thm minimax b}
Work under Assumptions  \ref{ass: init condition}, \ref{ass: minimal prop sigma} and \ref{ass: basic lip}. Let $\widehat b_{\mathrm{GL}}^N(t_0,x_0)$ be constructed with kernels $H$ and $K$ of order $\ell \geq 1$. 
For every $(t_0,x_0) \in (0,T) \times \R^d$, we have
\begin{equation} \label{eq: UB b}
\sup_{(b,c,\mu_0)}\big(\E_{\PP^N}\big[|\widehat b_{\mathrm{GL}}^N(t_0,x_0)-b(t_0,x_0,\mu_{t_0})|^2\big]\big)^{1/2}  \lesssim \Big(\frac{\log N}{N}\Big)^{s_d(\alpha,\beta)\wedge \ell_d/(2s_d(\alpha,\beta)\wedge \ell_d+1)},
\end{equation}
for large enough $N$, with $\ell_d = \ell/d$, up to constants that depend on $\mathfrak b$, $H \otimes K$ and $\alpha,\beta, L$ only. Moreover,
\begin{equation} \label{eq: LB b}
\inf_{\widehat b}\sup_{(b,c,\mu_0)}\E_{\PP^N}\big[|\widehat b -b(t_0,x_0,\mu_{t_0})|\big] \gtrsim N^{-s_d(\alpha,\beta)/(2s_d(\alpha,\beta)+1)},
\end{equation} 
for large enough $N$. The infimum in \eqref{eq: LB b} is taken over all estimators constructed with $(\mu_t^N)_{0 \leq t \leq T}$. The supremum in \eqref{eq: UB b} and \eqref{eq: LB b} is taken over $ \mathcal S^{\alpha, \beta}_L(t_0,x_0) \cap \mathcal D^{\alpha,\beta}_L(t_0,x_0)$ for some known non-decreasing parametrisation $\alpha = \alpha(\beta) >0$ such that $\alpha(\beta)/\beta$ is non-increasing, with $\beta >0$ and $L >0$.
\end{thm}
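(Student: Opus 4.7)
The argument splits into an upper-bound part, based on the oracle inequality of Theorem \ref{thm: oracle b}, and a lower-bound part, based on a two-point (Le Cam) inequality.

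\textbf{Upper bound.} I apply Theorem \ref{thm: oracle b}. The $\mu$-term on the right-hand side is bounded exactly as in Theorem \ref{thm minimax mu}, giving a rate $((\log N)/N)^{(\beta \wedge \ell)/(2(\beta \wedge \ell)+d)}$. For the $\pi$-term, I first verify that the product $\pi(t,x) = b(t,x,\mu_t)\mu_t(x)$ lies in $\mathcal H^{\alpha,\beta}(t_0,x_0)$: anisotropic Hölder classes are stable under pointwise multiplication once one factor is locally bounded, which is ensured by Lemma \ref{lem: loc unif mu above}. A separate Taylor expansion in time and space, combined with the vanishing-moment property of $H$ and $K$ up to order $\ell$, yields the bias bound $\mathcal B_{\boldsymbol h}^N(\pi)(t_0,x_0) \lesssim h_1^{\alpha \wedge \ell} + h_2^{\beta \wedge \ell}$. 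Balancing this with $\mathsf V_{\boldsymbol h}^N \asymp (\log N) N^{-1} h_1^{-1} h_2^{-d}$ along the anisotropic scaling $h_1^{\alpha \wedge \ell} \asymp h_2^{\beta \wedge \ell}$ and using the identity $1/s_d(\alpha,\beta) = 1/\alpha + d/\beta$ delivers the claimed exponent (with the kernel-order saturation encoded by $\ell_d$); the oracle-optimal bandwidths $(h_1^\star, h_2^\star)$ sit in $\mathcal H_2^N$ for $N$ large, and in particular $h_1^\star \leq (\log N)^{-2}$ is automatic since $h_1^\star$ is polynomial in $N^{-1}$. The $\pi$-rate dominates the $\mu$-rate because $s_d(\alpha,\beta) \leq \beta/d$.

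\textbf{Lower bound.} Pick a regular reference model $(b_0 \equiv 0,\, c = \mathrm{Id},\, \mu_0 = \mathcal N(0,\mathrm{Id}))$ and a measure-independent perturbation
$b_1(t,x,\mu) = \epsilon_N \,\phi\big((t-t_0)/h_1\big)\,\psi\big((x-x_0)/h_2\big)$,
with $\phi \otimes \psi$ smooth, compactly supported near $0$ and $\phi(0)\psi(0) > 0$. Since $b_1$ does not depend on $\mu$, Assumption \ref{ass: basic lip} is trivial; both parameters lie in $\mathcal S^{\alpha,\beta}_L(t_0,x_0) \cap \mathcal D^{\alpha,\beta}_L(t_0,x_0)$ provided $\epsilon_N \lesssim L\min(h_1^\alpha, h_2^\beta)$, thanks to the fact that the reference marginal $\mu_{t,0}=\mathcal N(0,(1+t)\mathrm{Id})$ is $\mathcal C^\infty$. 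Under both laws the particles are independent diffusions, so Girsanov gives
$\mathrm{KL}(\PP_0^N, \PP_1^N) \leq \tfrac{N}{2\sigma_-^2}\int_0^T \int_{\R^d} |b_1-b_0|^2 \,d\mu_{t,0}\,dt \lesssim N\epsilon_N^2 \,h_1\, h_2^d$,
using the local boundedness of $\mu_{t,0}$. The parameter separation is $|b_1(t_0,x_0,\mu_{t_0,1}) - b_0(t_0,x_0,\mu_{t_0,0})| = \epsilon_N\,\phi(0)\psi(0) \gtrsim \epsilon_N$. Setting $\epsilon_N \asymp h_1^\alpha = h_2^\beta$ with the KL constraint $N \epsilon_N^2 h_1 h_2^d \asymp 1$ forces $\epsilon_N \asymp N^{-s_d(\alpha,\beta)/(2 s_d(\alpha,\beta)+1)}$, and Le Cam's two-point inequality closes the lower bound.

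\textbf{Main obstacles.} The most delicate points are: (i) the quantitative product-rule bound showing $\pi \in \mathcal H^{\alpha,\beta}(t_0,x_0)$ uniformly over $\mathcal S^{\alpha,\beta}_L \cap \mathcal D^{\alpha,\beta}_L$, whose constants have to depend only on $\mathfrak b$ and $L$ and rely on Lemma \ref{lem: loc unif mu above}; and (ii) in the lower bound, verifying that the induced marginal $\mu_1 = \mathcal S(b_1, \mathrm{Id}, \mu_0)$ and the composition $b_1(\cdot,\cdot,\mu_{1,\cdot})$ keep Hölder semi-norms $\leq L$ at the optimal bandwidths, together with the check that the bump supports $[t_0-h_1,t_0+h_1]\times B(x_0,h_2)$ lie inside the neighbourhood $\mathcal U$ defining the Hölder seminorm and that all calibration constants ($\varpi_2, \varpi_3$) of Theorem \ref{thm: oracle b} are indeed attained.
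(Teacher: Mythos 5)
Your proposal is correct and follows the same strategy as the paper: the upper bound is obtained from the oracle inequality of Theorem \ref{thm: oracle b} together with the standard anisotropic bias--variance balancing (bias $h_1^{\alpha\wedge\ell}+h_2^{\beta\wedge\ell}$ against $\mathsf V_{\boldsymbol h}^N$, with $\pi=b\mu$ inheriting $\mathcal H^{\alpha,\beta}(t_0,x_0)$ smoothness, a point the paper also states without detailed proof), and the lower bound is a two-point Le Cam argument with a measure-free drift perturbation localized at scales $(h_1,h_2)$ and amplitude $\epsilon_N\asymp h_1^\alpha\asymp h_2^\beta$ constrained by a Girsanov bound. The one genuine (minor) divergence is in the lower bound: since your two drifts are interaction-free, you compute the Kullback--Leibler divergence directly for i.i.d.\ diffusions, getting $\tfrac{N}{2}\int_0^T|\Delta(t,\cdot)|_{L^2(\mu_t)}^2dt$ in one line; the paper instead proves Lemma \ref{eq: two point girsanov}, which bounds the total variation for general non-interacting drifts by controlling the bracket of the Girsanov martingale under the empirical measure via the concentration Theorem \ref{thm: concentration}, at the cost of the extra $N^{1/2}|\Delta|_{L^4}^4$ and $|\Delta|_\infty^2$ terms. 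Your shortcut is valid here (under the reference law the particles are genuinely i.i.d., so the empirical measure can be replaced by $\mu_t$ in expectation) and slightly cleaner, while the paper's lemma is marginally more general; your explicit remark that the $\pi$-rate dominates the $\mu$-rate because $s_d(\alpha,\beta)\leq \beta/d$ is also correct and is left implicit in the paper.
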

Some remarks: {\bf 1)} Theorem \ref{thm minimax b} establishes that estimating $b(t,x,\mu_t)$ has the same complexity as estimating $d$ functions of $1+d$ variables in the time and space domain. Each component $t \mapsto b^k(t,x,\mu_t)$ has smoothness $\alpha$, while  $x^\ell \mapsto b^k(t,(x^1,\ldots, x^\ell,\ldots,x^d),\mu_t)$ has smoothness $\beta$ for $\ell = 1,\ldots, d$, resulting in an anisotropic function $(t,x) \mapsto b^k(t,x,\mu_t)$ of $1+d$ variables with smoothness index $(\alpha,\beta,\ldots,\beta)$. We therefore recover the usual anisotropic minimax rate of convergence, with effective smoothness $s_d(\alpha,\beta)$ obtained as the arithmetico-geometric mean of the smoothness index $(\alpha,\beta,\ldots,\beta)$. {\bf 2)} For technical simplicity, we consider $\alpha$ and $\beta$ to be linked, as for instance in Proposition \ref{prop : reg Holder} where we have $\alpha = \alpha(\beta) = (\beta+1)/2$. This somehow weakens our anisotropic adaptation result, but enables us to easily construct a well-behaved ordering $\preceq$ for $\mathcal H_2^N$ that behaves well with respect to the bias at scale $\boldsymbol h$ as defined in \eqref{eq: bias pi}. Dropping this restriction is possible in principle, as in the original paper of Goldenshluger and Lepski \cite{GL11}, yet at a significant additional technical cost.  
{\bf 3)} The remarks {\bf 2)} and {\bf 3)} of Theorem \ref{thm minimax mu} are valid here as well. 

\section{Estimation of the interaction in the Vlasov model} \label{sec: identif interaction potential}

In this section, we work under Assumptions \ref{ass: init condition} and \ref{ass: minimal prop sigma} with a constant $c(t,x) = \tfrac{1}{2}\sigma^2 \mathrm{Id}$ for some $\sigma >0$ and Assumption \ref{ass: basic lip} (iii), {\it i.e.} in the Vlasov case 
$$b(t,x,\mu) = b(x,\mu) = \int_{\R^d}\widetilde b(x,y)\mu(dy),$$
for $d\geq 1$ and with a time homogeneous drift kernel
$$
\widetilde b(x,y) = F(x-y)+G(x),\;\;x,y\in \mathbb \R^d.
$$
Model \eqref{eq:diff basique} then reads
$$
\left\{
\begin{array}{l}
dX_t^i = G(X_t^i)dt + N^{-1}\sum_{j = 1}^N F(X_t^i-X_t^j)dt+\sigma dB_t^i,\;\;1 \leq i \leq N,\; t \in [0,T],\\  \\
\mathcal L(X_0^1,\ldots, X_0^N) = \mu_0^{\otimes N}.
\end{array}
\right.
$$
We assume that each component $F^k \in L^1(\R^d)$ for every $k=1,\ldots, d$. We are interested in identifying the interaction function $x \mapsto F(x)$ from data \eqref{eq: data}
and possibly  $x \mapsto G(x)$, rather considered here as a nuisance parameter\footnote{In particular, it is a first step toward the interesting problem of testing the hypothesis 
$F=0$ against a set of local alternatives that quantify how far $F$ is from being constant.}. We have

\begin{align} \label{eq: the deconvol}
b(x,\mu_t)&  = G(x)+\int_{\mathbb R^d} F(x-y)\mu_t(y)dy \nonumber \\
& = G(x)+F\star\mu_t(x),
\end{align}
where $f\star \mu_t(x) = \big(\int_{\mathbb R^d}f^k(x-y)\mu_t(y)dy\big)_{1 \leq k \leq d}$ denotes the convolution  between $f:\mathbb R^d \rightarrow \R^d$ and $\mu_t$.

\subsection{Identification of the interaction $F$} Introduce the linear form $\mathcal L$ acting on test functions $\varphi: [0,T] \rightarrow \mathbb C$ defined by 
\begin{equation} \label{eq: property Lt}
\mathcal L\varphi = \int_{[0,T]}\varphi(t)w(t)\rho(dt),
\end{equation}
where $\rho$ is a probability distribution on $[0,T]$ and $w: [0,T]\rightarrow \R$ a bounded weight function such that $\int_{[0,T]}w(t)\rho(dt)=0$. Note that $\mathcal L 1 = 0$
where $1$ denotes the constant function. Applying $\mathcal L$ on both sides of \eqref{eq: the deconvol}, we obtain
\begin{equation} \label{eq: apply L}
\mathcal Lb(x,\mu) = F \star \mathcal L\mu (x)
\end{equation}
by Fubini's theorem. For $f  = (f^k)_{1 \leq k \leq d}: \R^d \rightarrow \R^d$ with  $f^k\in L^1(\R^d)$, define a Fourier transform
$$\mathcal F(f)(\xi) = \big(\int_{\R^d}\mathrm{e}^{-2i\pi \xi^\top x}f^k(x)dx\big)_{1 \leq k \leq d},\;\;\xi \in \R^d,$$
so that whenever $g \in L^1(\R^d)$, we have $\mathcal F(f\star g) = \mathcal F(f)\cdot \mathcal F(g)$. We infer from \eqref{eq: apply L}
%
%
$$\mathcal F\big(\mathcal L b(\cdot,\mu)\big) = \mathcal F(F) \cdot \mathcal F(\mathcal L\mu).
$$
This yields the formal decomposition
\begin{equation} \label{eq: expansion W fourier}
\mathcal F(F) =  \frac{\mathcal F\big(\mathcal L b(\cdot,\mu)\big)}{ \mathcal F(\mathcal L\mu)}, 
\end{equation}
provided the quotient is well defined.

\subsection{Consistent estimation of $F$}
A first estimation strategy  consists in plugging-in our estimators of $b(t,x,\mu_t)$ and $\mu_t$  in \eqref{eq: expansion W fourier} above. A somewhat simpler estimator of $ \mathcal F(\mathcal L\mu)(\xi) = \mathcal L \mathcal F(\mu)(\xi)$ is given by the periodogram
$$\mathcal L\big(\int_{\R^d}\mathrm{e}^{-2i\pi \xi^\top x}\mu^N(dx)\big) =  \int_{\R^d}\mathrm{e}^{-2i\pi \xi^\top x}\mathcal L\mu^N(dx) = \mathcal F(\mathcal L \mu^N)(\xi)$$
for which we need not tune a bandwidth.
Following Johannes \cite{JJ09}, 
we obtain an estimator  of $F$ by the formula
\begin{equation} \label{eq: def est W}
\mathcal F(\widehat{F}^N_{\varpi, \varpi'}) = \frac{\mathcal F\big(\mathcal L\big((\widehat b_{h,\boldsymbol h}^N)_{\varpi'}^r\big)\big)\cdot \overline{\mathcal F(\mathcal L \mu^N)}}{|\mathcal F(\mathcal L \mu^N)|^2} {\bf 1}_{\{|\mathcal F(\mathcal L\mu^N) |^2 \geq  \varpi\}} \in \R^d
\end{equation}
for some threshold $\varpi >0$ vanishing as $N \rightarrow \infty$, with the estimator 
$$\widehat b_{h,\boldsymbol h}^N(t_0,x_0)_{\varpi'}^r = \widehat b_{h,\boldsymbol h}^N(t_0,x_0)_{\varpi'}{\bf 1}_{\{|x| \leq r\}}$$
of $b(t_0,x_0,\mu_{t_0})$,  constructed in Section \ref{sec: construct estim} for some threshold $\varpi'>0$ and  bandwidths $h>0$ and $\boldsymbol h>0$. We also set the estimator to be equal to $0$ outside $|x| \leq r$ for some $r >0$.
We obtain a consistency result under the following additional assumption: 
\begin{assumption} \label{ass: fourier et derivation t}
We have $|\mathcal F(\mathcal L \mu)(\xi)| >0$ $d\xi$-almost everywhere.
\end{assumption}
\begin{thm} \label{thm: identif Vlasov}
Work under the assumptions of Proposition \ref{prop : reg Holder} and Assumption \ref{ass: fourier et derivation t}. Assume moreover that $G$ is in $L^1(\R^d) \cap L^2(\R^d)$ componentwise. If $w$ has compact support in $(0,T)$, there exists a choice of $(\varpi, \varpi', h, \boldsymbol h) = (\varpi_N, \varpi_N', h_N, \boldsymbol h_N) \rightarrow 0$ and $r = r_N \rightarrow \infty$ such that 
$$\E_{\PP^N}\big[|\widehat{F}^N_{\varpi, \varpi'}-F|_2^2\big] \rightarrow 0\;\;\text{as}\;\;N \rightarrow \infty.$$
\end{thm}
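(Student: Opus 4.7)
The plan is to work on the Fourier side via Plancherel's identity and decompose the error of $\mathcal F(\widehat{\nabla W}^N_\varpi)$ into three contributions (numerator, denominator, and thresholding indicator). Set $\psi=\mathcal F(\mathcal L\mu)$, $\widehat\psi^N=\mathcal F(\mathcal L\mu^N)$, $\phi=-\mathcal F(\mathcal L b(\cdot,\mu_\cdot))$ and $\widehat\phi^N=-\mathcal F(\mathcal L(\widehat b_{h,\boldsymbol h}^N)_{\varpi'}^r)$, so that \eqref{eq: expansion W fourier} reads $\mathcal F(\nabla W)=\phi/\psi$. Since $\nabla W\in L^2$ (Proposition \ref{prop : reg Holder} and the compact support of $\nabla W$) and $\mathcal F(\widehat{\nabla W}^N_\varpi)\in L^2$ (thanks to the threshold $\mathbf 1_{\{|\widehat\psi^N|^2\geq\varpi\}}$ and the spatial truncation of $\widehat b^{N,r}$), Plancherel combined with the identity $\widehat\phi^N/\widehat\psi^N-\phi/\psi=(\widehat\phi^N-\phi)/\widehat\psi^N+\phi(\psi-\widehat\psi^N)/(\widehat\psi^N\psi)$ gives
\begin{align*}
\E_{\PP^N}|\widehat{\nabla W}^N_\varpi-\nabla W|_2^2
\lesssim\,& \varpi^{-1}\,\E\int|\widehat\phi^N-\phi|^2\,d\xi+\varpi^{-1}\int|\mathcal F(\nabla W)|^2\,\E|\widehat\psi^N-\psi|^2\,d\xi \\
&+\int|\mathcal F(\nabla W)|^2\,\PP(|\widehat\psi^N|^2<\varpi)\,d\xi.
\end{align*}

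For the denominator term, $\widehat\psi^N(\xi)$ is an empirical average of the bounded test function $(t,x)\mapsto w(t)e^{-2i\pi\xi^\top x}$, so Theorem \ref{thm: concentration} applied to its real and imaginary parts yields $\E|\widehat\psi^N(\xi)-\psi(\xi)|^2\lesssim N^{-1}$ uniformly in $\xi$; the middle term is thus $\lesssim \varpi_N^{-1}N^{-1}|\nabla W|_2^2$, which vanishes provided $\varpi_N N\to\infty$. For the indicator term, pointwise convergence $\widehat\psi^N(\xi)\to\psi(\xi)$ in probability, combined with Assumption \ref{ass: fourier et derivation t} ($|\psi(\xi)|>0$ almost everywhere), gives $\PP(|\widehat\psi^N(\xi)|^2<\varpi_N)\to 0$ for a.e.\ $\xi$ once $\varpi_N\to 0$, and dominated convergence with integrable majorant $|\mathcal F(\nabla W)|^2$ then concludes.

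For the numerator term, Plancherel rewrites the integral as $\int|\mathcal L(\widehat b_{h,\boldsymbol h}^N)_{\varpi'}^r(x)-\mathcal L b(x,\mu)|^2\,dx$, where $\mathcal L b(\cdot,\mu)=-\nabla W\star \mathcal L\mu$ belongs to $L^2$ because $\mathcal F(\nabla W)$ is bounded (as $\nabla W\in L^1$) and $\mathcal L\mu\in L^2$. Splitting at $|x|=r$, the tail $\int_{|x|>r}|\mathcal L b(x,\mu)|^2\,dx$ vanishes as $r\to\infty$ by dominated convergence; on the ball $\{|x|\leq r\}$, Cauchy-Schwarz and Fubini bound the expected contribution by $|w|_\infty|w|_1\int_0^T\!\int_{|x|\leq r}\E|\widehat b_{h,\boldsymbol h}^N(t,x)_{\varpi'}-b(t,x,\mu_t)|^2\,dx\,\rho(dt)$. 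On the compact set $\mathrm{supp}(w)\times\{|x|\leq r\}$, continuity and smoothness of $\mu$ from Proposition \ref{prop : reg Holder} deliver a uniform positive lower bound on $\mu_t(x)$, so the oracle inequality of Theorem \ref{thm: oracle b} applies with uniform constants and yields pointwise risk $\lesssim \mathcal B(h,\boldsymbol h)^2+\log N/(Nh_1h_2^d)$. The total numerator contribution is thus $\lesssim r^d\big(\mathcal B^2+\log N/(Nh_1h_2^d)\big)$, which vanishes for appropriate choices $(h_N,\boldsymbol h_N)\to 0$ and $r_N\to\infty$.

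The delicate point is this last step: the pointwise oracle inequality depends on local lower bounds of $\mu_t(x)$ (through the constant $\kappa_4$), so integrating over an expanding ball $\{|x|\leq r_N\}$ requires a careful interplay between $r_N$ (governing both the $L^2$-tail of $\mathcal L b$ and the volume factor inflating the variance), the thresholds $\varpi_N$ and $\varpi_N'$, and the bandwidths $(h_N,\boldsymbol h_N)$. A concrete sample calibration is $\varpi_N=N^{-a}$, $h_{1,N}=h_{2,N}=N^{-c}$ for small $a,c>0$, and $r_N=(\log N)^\eta$ for a large fixed $\eta$; under the smoothness guaranteed by Proposition \ref{prop : reg Holder}, every error contribution then tends to zero, giving the claimed $L^2$-consistency of $\widehat{\nabla W}^N_\varpi$.
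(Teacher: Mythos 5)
Your argument follows essentially the same route as the paper's proof: Plancherel, a split of the Fourier-domain error into the thresholded region and its complement, a uniform-in-$\xi$ concentration bound for the empirical Fourier transform of $\mathcal L\mu^N$ (the paper's Lemma \ref{lem: periodogram}, which you rederive from Theorem \ref{thm: concentration}), dominated convergence for the indicator term via Assumption \ref{ass: fourier et derivation t}, and, for the numerator, a return to physical space with a split at radius $r$, the tail controlled by square-integrability and the ball by pointwise bias--variance bounds, all tied together by a ``slowly enough'' calibration of $(r_N,\varpi_N,\varpi'_N,h_N,\boldsymbol h_N)$.

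Three remarks. First, since $\widehat b^N_{h,\boldsymbol h}$ entering $\widehat{\nabla W}^N_\varpi$ uses deterministic bandwidths, the correct tools are Lemmas \ref{lem: bias var mu} and \ref{lem: bias var pi} (as in the paper), not the oracle inequality of Theorem \ref{thm: oracle b}, which concerns the GL-selected bandwidths; this is only a citation mismatch. Second, your justification that $\mathcal L b(\cdot,\mu)\in L^2$ goes through $\mathcal L\mu\in L^2$, which is not immediate from the stated assumptions; it is cleaner to use Young's inequality, $|\nabla W\star\mu_t|_2\le|\nabla W|_2$ since $\mu_t$ is a probability, or the paper's Fubini split that uses $\nabla V,\nabla W\in L^2$. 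Third, and this is the only substantive slip: the ``concrete sample calibration'' $\varpi_N=N^{-a}$, $r_N=(\log N)^\eta$ with $\eta$ large is not justified. The constants in the on-ball bound degrade with $r$ in an uncontrolled way --- the local lower bound $\kappa_4(r)$ of Lemma \ref{lem: loc unif mu above} decays like $\exp(-\mathfrak c(1+r^2))$, so $\varpi'(r_N)^{-2}$ grows faster than any power of $N$ as soon as $\eta>1/2$ --- and the tail $\int_{|x|>r}|\mathcal L b|^2dx$ together with the dominated-convergence contributions carry no rate, so a polynomial $\varpi_N$ cannot be asserted to beat them. What the theorem asks for, and what your main argument does deliver, is only existence: choose $r_N\to\infty$ slowly enough that $\varpi'(r_N)^{-2}C(r_N)u_N\to0$, and then $\varpi_N\to0$ slowly enough relative to the resulting total error, exactly the diagonal selection the paper makes explicit.
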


Some remarks: {\bf 1)} Theorem \ref{thm: identif Vlasov} proves that we can reconstruct the interaction force $F$ from data \eqref{eq: data}, while the function $G$ remains a nuisance parameter. This is a first step for the construction of a statistical test based on data \eqref{eq: data} for the presence against the absence of interaction between particles in the Vlasov model.   
{\bf 2)} Although we obtain consistency, we do not have a rate of convergence and our result is not uniform in the model parameter. A glance at the proof of Theorem \ref{thm: identif Vlasov} shows that it is possible to cook-up a result with a rate of convergence and some uniformity in the parameter, provided we have a sharp control from below on the decay $|\mathcal F(\mathcal L\mu)(\xi)|$ or rather $|\mathcal F(\mu)(\xi)|$ as $|\xi| \rightarrow \infty$ as well as the decay of $\inf_{t \in [r_1,r_2], |x| \leq r}\mu_t(x)$ for given $[r_1,r_2] \subset (0,T)$ as $r \rightarrow \infty$. 
This requires the exact knowledge of the smoothness of the solution map $\mu = \mathcal S(b,c,\mu_0)$, and it is a delicate issue, see Proposition \ref{prop : reg Holder}; we can anticipate ill-posedness. 
{\bf 3)} From our estimator of $F$, we may construct a plug-in estimator for the function $G$ by setting $$\widehat{G}^N(x)_{\varpi, \varpi'} = - \widehat b_{h, \boldsymbol h}^N(t,x)_{\varpi'}^r-\widehat{F}
^N_{\varpi, \varpi'} \star_x \widehat \mu_{h}^N(t,x).$$ A consistency result can be obtained in the same way as for Theorem \ref{thm: identif Vlasov}. {\bf 4)}, the deconvolution method we employ here requires  quite a stringent localisation assumption on the external force $G$ and the interaction force $F$. As pointed out by a referee, it does not apply for gradient forces of the form $G=-\nabla V$ and $F=-\nabla W$ where $V$ and $W$ diverge polynomially at infinity like {\it e.g.} in  \cite{benachour1998nonlinear, herrmann2008large, CATTIAUXetal} for which alternative methods yet need to be constructed.

\section{Probabilistic tools: a concentration inequality} \label{sec: proof concentration}

\subsection{A Bernstein inequality} \label{subsec: concentration}
Let $\rho(dt)$ be a probability measure on $[0,T]$. We establish a deviation inequality for the sequence of signed measures
$\nu^N(dt, dx) - \nu(dt,dx),$
where 
$$\nu^N(dt, dx) = \mu_t^N(dx) \otimes \rho(dt)\;\;\text{and}\;\;\nu(dt,dx) = \mu_t(x)dx \otimes \rho(dt).$$
We have a Bernstein concentration inequality:
\begin{thm} \label{thm: concentration}
Work under Assumptions  \ref{ass: init condition}, \ref{ass: minimal prop sigma} and \ref{ass: basic lip}.
Let $(\mu_t)_{0 \leq t \leq T}$ denote the unique solution of \eqref{eq: mckv approx} with $\mu_{t=0}=\mu_0$ satisfying Assumption \ref{ass: init condition}. Then there exist  $\kappa_1, \kappa_2 > 0$ depending on $\mathfrak b$ such that 
$$
\mathbb P^N\Big(\int_{[0,T] \times \R^d} \phi(t,y)\big(\nu^N(dt,dy)-\nu(dt, dy)\big) \geq x\Big) \leq  \kappa_1\exp\Big(-\kappa_2\frac{ Nx^2}{|\phi|_{L^2(\nu)}^2+|\phi|_\infty x}\Big)
$$
for every $x \geq 0$, for every bounded $\phi: [0,T] \times \R^d \rightarrow \R$, and for any probability measure $\rho(dt)$ on $[0,T]$.
\end{thm}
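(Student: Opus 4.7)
My strategy is a Girsanov change of measure to an i.i.d.\ reference, a classical Bernstein bound there, and a Cauchy--Schwarz transfer governed by a tight $L^2$-bound on the likelihood ratio.

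Let $\QQ^N$ denote the law on $\mathcal C$ under which the $N$ coordinates $(X^1,\ldots,X^N)$ are i.i.d.\ copies of the nonlinear limit process $dY_t = b(t,Y_t,\mu_t)\,dt+\sigma(t,Y_t)\,dB_t$ with $Y_0\sim\mu_0$; in particular $X_t^i\sim\mu_t$ marginally. By the ellipticity in Assumption \ref{ass: minimal prop sigma}, Girsanov's theorem applies and the density $L_N := d\PP^N/d\QQ^N$ on $\mathcal F_T$ is the stochastic exponential $\mathcal E(M)_T$ with $M_T = \sum_{i=1}^N\int_0^T \bigl(\sigma^{-1}(s,X_s^i)\Delta(s,X_s^i)\bigr)^\top d\widetilde B_s^i$, where $\widetilde B^i$ is a $\QQ^N$-Brownian motion and $\Delta(s,x) := b(s,x,\mu_s^N)-b(s,x,\mu_s)$. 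Under $\QQ^N$, the variables $Z_i := \int_0^T\phi(t,X_t^i)\rho(dt)-\int\phi\,d\nu$ are i.i.d., centered, bounded by $2|\phi|_\infty$, with variance at most $|\phi|_{L^2(\nu)}^2$ (by Cauchy--Schwarz against the probability measure $\rho$ and Fubini), so the classical Bernstein inequality delivers the claimed tail bound \emph{under $\QQ^N$}.

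To transfer the bound to $\PP^N$, I use Cauchy--Schwarz under $\QQ^N$:
\[\PP^N(A) = \EE_{\QQ^N}[L_N\mathbf 1_A] \le \QQ^N(A)^{1/2}\,\EE_{\QQ^N}[L_N^2]^{1/2}.\]
This reduces the whole theorem (up to halving $\kappa_2$) to the uniform estimate
\[\sup_{N\ge 1}\EE_{\QQ^N}[L_N^2]\le C(\mathfrak b)<\infty. \qquad (\star)\]
Writing $L_N^2 = \mathcal E(2M)_T\exp(\langle M\rangle_T)$ and applying Cauchy--Schwarz once more (after a Novikov-type check so that $\mathcal E(4M)$ is a true martingale), $(\star)$ reduces to a uniform exponential moment bound for $\langle M\rangle_T \lesssim N\int_0^T\int_{\R^d}|\Delta(s,x)|^2\mu_s^N(dx)\,ds$.

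The hard part will be this uniform exponential moment bound. A naive Lipschitz estimate $|\Delta(s,\cdot)|\le |b|_{\mathrm{Lip}}\mathcal W_1(\mu_s^N,\mu_s)$ would give the size $N\mathcal W_1^2\sim N^{1-2/\max(2,d)}$, which blows up for $d\ge 3$ by Fournier--Guillin. The plan to circumvent this dimensional obstruction is to Taylor-expand $\mu\mapsto b(s,x,\mu)$ to order $k$:
\[\Delta(s,x) = \sum_{\ell=1}^{k-1}\frac{1}{\ell!}U_\ell(s,x)+R_k(s,x),\]
where
\[U_\ell(s,x) = \int (\delta_\mu^\ell b)(s,x,(y_1,\ldots,y_\ell),\mu_s)\,d(\mu_s^N-\mu_s)^{\otimes\ell}(y_1,\ldots,y_\ell)\]
is, under $\QQ^N$, a degenerate $U$-statistic of order $\ell$ in an i.i.d.\ sample, and $|R_k(s,x)|\lesssim\mathcal W_1(\mu_s^N,\mu_s)^k$ thanks to the tensor representation \eqref{eq: rep remainder b} together with the Lipschitz bound on $\delta_\mu^k b$. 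By Arcones--Gin\'e type sub-Gaussian concentration for degenerate $U$-statistics, each $U_\ell$ has typical size $N^{-\ell/2}$ with exponential moments uniform in $N$, so that $\EE_{\QQ^N}[\exp(\alpha N|U_\ell|^2)]$ is bounded for every $\ell\ge 1$ and every $\alpha>0$. For the remainder, the Gaussian deviation bounds of Bolley--Guillin--Villani / Fournier--Guillin on $\mathcal W_1(\mu_s^N,\mu_s)$ (using Assumption \ref{ass: init condition}) yield bounded exponential moments of $\alpha N\mathcal W_1(\mu_s^N,\mu_s)^{2k}$ as soon as $2k\ge\max(2,d)$, i.e.\ $k\ge d/2$, which is precisely the condition imposed in Assumption \ref{ass: basic lip}(ii) (with a minor logarithmic adjustment in $d=2$, and trivially satisfied in $d=1$). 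The Vlasov case (iii) corresponds formally to $k=\infty$ and works in every dimension. Assembling these pieces gives $(\star)$ and closes the argument.
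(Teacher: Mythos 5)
Your architecture coincides with the paper's up to the transfer step: change of measure to the i.i.d.\ law $\overline{\PP}^N$ (your $\QQ^N$), classical Bernstein there, and control of the Girsanov bracket via the Taylor expansion of $\mu\mapsto b(t,x,\mu)$, with $U$-statistic moment bounds for the terms $U_\ell$ and Fournier--Guillin for the $\mathcal W_1$-remainder. The gap is in $(\star)$. Your reduction needs $\sup_N\EE_{\QQ^N}[L_N^2]<\infty$, i.e.\ a uniform exponential moment of (a multiple of) $\langle M\rangle_T$ over the \emph{whole} horizon, and you justify it by asserting that $\EE_{\QQ^N}[\exp(\alpha N|U_\ell|^2)]$ is bounded for \emph{every} $\alpha>0$. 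That assertion is false already for $\ell=1$: $\sqrt N\,U_1$ is sub-Gaussian with variance proxy independent of $N$, so $N|U_1|^2$ is the square of a sub-Gaussian variable and has finite exponential moments only below a threshold $\alpha<\alpha_0(\mathfrak b)$ (this is exactly what the moment bound $\EE[|U_\ell|^{2p}]\le p!K_\ell^p(N-k)^{-p}$ of the paper's Lemma \ref{lem rosenthal} encodes). The same thresholding occurs for the remainder: in the boundary cases ($d=1,k=1$; $2k=d$) the Fournier--Guillin tail $\exp(-\mathfrak C N x^2)$ (resp.\ $\exp(-\mathfrak C N x^{d}))$ gives $\EE[\exp(\alpha N\,\mathcal W_1^2\wedge \mathcal W_1^{2k})]<\infty$ only for $\alpha<\mathfrak C$. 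Since the coefficient you need in front of $N|U_\ell|^2$ and $N\mathcal W_1^{2}\wedge N\mathcal W_1^{2k}$ after bounding $\langle M\rangle_T\lesssim N\int_0^T\int|\Delta|^2d\mu_s^N ds$ is proportional to $T$, $|\mathrm{Tr}(c^{-1})|_\infty$ and the Lipschitz constants of $b$ and its linear derivatives, there is no reason it falls below these thresholds for a fixed horizon and arbitrary parameters in $\mathfrak b$; consequently the global $L^2$ bound on the likelihood ratio (and the Novikov check for $\mathcal E(4M)$) cannot be obtained this way without a smallness assumption on $T$ or on $b$, which the theorem does not make.

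The paper's proof circumvents precisely this obstruction. Proposition \ref{prop: controle change of proba} proves the exponential moment bound only for bracket \emph{increments} $\langle \overline M^N_\cdot\rangle_{t+\delta}-\langle \overline M^N_\cdot\rangle_t$ with $\delta\le\delta_0(\tau,\mathfrak b)$ chosen small enough that the coefficient $\kappa\delta$ sits below the thresholds coming from $K_\ell$ and $\mathfrak C$; the transfer from $\overline\PP^N$ to $\PP^N$ is then done not by a single Cauchy--Schwarz but by the iterated conditional Cauchy--Schwarz inequality \eqref{eq: the claim} along a subdivision $0=t_0<\dots<t_K=T$ (adapted from G\"artner and Lacker), which yields $\PP^N(\mathcal A^N)\le C_1^{K(K+1)/8}\,\overline\PP^N(\mathcal A^N)^{4^{-K}}$ and hence the Bernstein bound with $\kappa_2=2^{-1}4^{-K}$, still depending only on $\mathfrak b$. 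If you want to salvage your write-up, replace the global bound $(\star)$ by this small-interval estimate plus the telescoping conditioning argument; the rest of your plan (Bernstein under the i.i.d.\ law, the Taylor expansion, the use of the tensor representation \eqref{eq: rep remainder b} and of the condition $k\ge d/2$) then matches the paper's proof.
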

As a corollary, for a bounded $\phi: \R^d \rightarrow \R$ and any $0 \leq t_0 \leq T$, picking $\rho(dt) = \delta_{t_0}(dt)$, we obtain
$$
\mathbb P^N\Big(\int_{\R^d} \phi(y)\big(\mu^N_{t_0}(dy)-\mu_{t_0}(y)dy\big) \geq x\Big) \leq  \kappa_1\exp\Big(-\kappa_2\frac{ Nx^2}{|\phi|_{L^2(\mu_{t_0})}^2+|\phi|_\infty x}\Big).$$ 
Several remarks: {\bf 1)} Up to the constants $\kappa_i$, the result is quite satisfactory and comparable to the Bernstein deviation inequality for independent data, see {\it e.g.} Massart \cite{massart2007concentration}. Theorem \ref{thm: concentration} is the gateway to derive sharp nonparametric estimators, although it has an independent interest as a deviation inequality. {\bf 2)} The constants $\kappa_i$ are explicitly computable, but certainly far from being optimal with the method of proof employed here. {\bf 3)} Our method of proof uses a change of measure argument based on Girsanov's theorem, in the spirit of the recent work of Lacker \cite{Lacker}. It can presumably be extended to path dependent coefficients in \eqref{eq:diff basique}, but it is essential that the diffusion coefficient does not depend on $\mu_t^N$. 
{\bf 4)} We have an interplay between the smoothness $k$ of the drift $b$ in its measure argument and the dimension $d$ of the ambient state space. This is explained by the fact that we need to control an exponential moment of $\sum_{i = 1}^N\int_t^{t+\delta}|b(s,X_s^i,\mu^N_s)-b(s,X_s^i,\mu_s)|^2ds$ over small intervals $[t, t+\delta]$ in order to approximate the law of the data by the law of independent particles. This approximation is roughly controlled by $N\mathcal W_1(\mu_t^N,\mu_t)^2$ for which a dimensional effect drastically deteriorates the rate of convergence, see {\it e.g.} Fournier and Guillin \cite{FournierGuillin}. The $k$-linear differentiability of $\mu \mapsto b(t,x,\mu)$ enables us to mitigate this effect. In particular, in the Vlasov case covered by Assumption \ref{ass: basic lip}  (iii), we formally have $k=\infty$, and the result is valid in any dimension $d \geq 1$.\\  

The remainder of Section \ref{sec: proof concentration} is devoted to the proof of Theorem \ref{thm: concentration}.

\subsection{Preparation for the proof of Theorem \ref{thm: concentration}} \label{sec: preparation}
We let $\overline{\PP}^N$ denote the unique probability measure on $(\mathcal C, \mathcal F_T)$ under which the canonical process  $X = (X^1,\ldots, X^N)$  solves

\begin{equation} \label{eq: diff limite}
\left\{
\begin{array}{l}
dX_t^i = b(t,X_t^i,\mu_t)dt+\sigma(t,X_t^i) d\overline{B}_t^i,\;\;1 \leq i \leq N,\; t \in [0,T],\\  \\
\mathcal L(X_0^1,\ldots, X_0^N) = \mu_0^{\otimes N},
\end{array}
\right.
\end{equation}
where 
$$
\overline{B}^i_t = \int_0^t c(s,X_s^i)^{-1/2}\big(dX_s^i-b(s,X_s^i,\mu_s) ds\big), \;\;1 \leq i \leq N,
$$
are  independent $d$-dimensional $\overline{\mathbb P}^{N}$-Brownian motions. The existence of $\overline{\PP}^N$ follows from Carmona and Delarue \cite{carmona2018probabilistic} or the lectures notes of Lacker  \cite{lacker2018mean}. In turn, 
the real-valued process 
\begin{equation} \label{def M bar}
\overline{M}^{N}_{t} = \sum_{i = 1}^N\int_0^{t}\big((c^{-1/2}b)(s,X_s^i,\mu^N_s)- (c^{-1/2}b)(s,X_s^i,\mu_s)\big)^\top d\overline{B}_s^i,
\end{equation}
is a $\overline{\mathbb P}^{N}$-local martingale. Here, $c^{-1/2}$ denotes any square-root of $c^{-1} = (\sigma\sigma^\top)^{-1}$.\\

The following estimate is the central result of the section. It is the key ingredient that enables us to implement a change of probability argument in order to obtain our concentration estimates.
Its proof is delayed until Section \ref{sec: proof of proposition fonda}.

\begin{prop} \label{prop: controle change of proba}
Work under Assumptions  \ref{ass: init condition}, \ref{ass: minimal prop sigma} and \ref{ass: basic lip}.
For every $\tau >0$, 
there exists $\delta_0 >0$ depending on $\tau$ and $\mathfrak b$ such that
$$\sup_{N \geq 1}\sup_{t \in [0,T-\delta]}\E_{\overline{\mathbb P}^{N}}\Big[\exp\big(\tau\big(\langle \overline{M}_\cdot^N \rangle_{t+\delta} - \langle \overline{M}_\cdot^N\rangle_t\big)\big)\Big] \leq C_1,
$$
for every $0 \leq  \delta \leq \delta_0$ and some $C_1$ that depends on $\mathfrak b$ and $\tau$. 
\end{prop}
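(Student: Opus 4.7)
The strategy is to work under $\overline{\PP}^N$, where the particles are independent copies of the McKean-Vlasov process with marginal law $\mu_t$, and to bound the drift discrepancy $b(s,x,\mu_s^N)-b(s,x,\mu_s)$ through a Taylor-type expansion built upon the $k$-fold linear functional derivative of Assumption \ref{ass: basic lip}. By the uniform ellipticity $|c^{-1/2}|_{\mathrm{op}}\leq \sigma_-^{-1}$ from Assumption \ref{ass: minimal prop sigma}, the quadratic variation increment is dominated by
$$Z_N^{t,\delta}:=\sigma_-^{-2}\sum_{i=1}^N\int_t^{t+\delta}\bigl|b(s,X_s^i,\mu_s^N)-b(s,X_s^i,\mu_s)\bigr|^2\,ds,$$
so it suffices to establish the bound $\sup_{N,t}\E_{\overline{\PP}^N}[\exp(\tau Z_N^{t,\delta})]\leq C_1$ for $\delta\leq \delta_0$ small enough.

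Iterating the identity \eqref{eq: def derivee mesure} $k$ times produces the decomposition
$$b(s,x,\mu_s^N)-b(s,x,\mu_s)=\sum_{\ell=1}^{k-1}\tfrac{1}{\ell!}T_\ell^N(s,x)+R_k^N(s,x),$$
where $T_\ell^N(s,x)=\int_{(\R^d)^\ell}\delta_\mu^\ell b(s,x,\mathbf{y},\mu_s)\prod_{j=1}^\ell(\mu_s^N-\mu_s)(dy_j)$ is a degenerate $V$-statistic of order $\ell$ in the i.i.d. sample $(X_s^j)_j$, and $R_k^N$ involves $\delta_\mu^k b$ evaluated at an intermediate measure $(1-\theta)\mu_s+\theta\mu_s^N$. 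For each $T_\ell^N$, a Hoeffding decomposition into orthogonal chaoses of orders $1,\ldots,\ell$ combined with standard sub-Gaussian $U$-statistic moment bounds (of Arcones-Gin\'e type), calibrated through the sub-Gaussian marginals propagated from Assumption \ref{ass: init condition}, yields $\E_{\overline{\PP}^N}[|T_\ell^N(s,X_s^i)|^{2p}]\lesssim C^p(p/N)^\ell$. Integrating over $[t,t+\delta]$ and summing over $i$ produces exponential moment bounds for the $\ell$-th contribution as soon as $\tau\delta$ is small; the binding constraint comes from $\ell=1$, whose contribution to $\E_{\overline{\PP}^N}[Z_N^{t,\delta}]$ is $O(\delta)$.

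The remainder is treated through the tensor representation \eqref{eq: rep remainder b}: the Lipschitz property of each factor combined with Kantorovich-Rubinstein duality gives $|R_k^N(s,x)|\lesssim \mathcal W_1(\mu_s^N,\mu_s)^k$, so that
$$\sum_{i=1}^N\int_t^{t+\delta}|R_k^N(s,X_s^i)|^2\,ds\lesssim N\delta\sup_{s\in[t,t+\delta]}\mathcal W_1(\mu_s^N,\mu_s)^{2k}.$$
The Fournier-Guillin concentration inequality \cite{FournierGuillin}, applied to the i.i.d. sample $(X_s^j)_j$ of law $\mu_s$ (which inherits sub-Gaussian tails via standard propagation of the moment condition in Assumption \ref{ass: init condition} along the McKean-Vlasov flow), yields a uniform exponential moment for $N^{2k/\max(2,d)}\mathcal W_1(\mu_s^N,\mu_s)^{2k}$, up to a logarithmic correction when $d=2$. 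The threshold $k\geq d/2$ (or $k\geq 2$ when $d=2$) from Assumption \ref{ass: basic lip}(ii) is precisely what keeps $N\mathcal W_1^{2k}$ of order one; in the Vlasov case (iii), the expansion stops at $\ell=1$ and the remainder vanishes, which explains its validity for every $d\geq 1$.

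The main obstacle is to combine these exponential moments into a single bound that is uniform in $N$ and $t$. I would apply H\"older's inequality with a finite collection of conjugate exponents (one per chaos level, one for the Wasserstein remainder), control all polynomial moments of $\int|x|^q\mu_s^N(dx)$ via uniform propagation of chaos moment estimates, and finally select $\delta_0=\delta_0(\tau,\mathfrak b)$ small enough that the binding first-chaos contribution lies within the radius of exponential integrability. The tight interplay between the smoothness order $k$ in the measure variable and the dimension $d$ via the Fournier-Guillin rate is exactly what Assumption \ref{ass: basic lip}(ii) is engineered to accommodate, and it accounts for the special modification required when $d=2$.
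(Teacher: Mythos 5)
Your overall architecture is the same as the paper's: reduce the bracket increment to $\sum_i\int_t^{t+\delta}|b(s,X_s^i,\mu_s^N)-b(s,X_s^i,\mu_s)|^2ds$ via ellipticity, expand the drift discrepancy through the $k$-fold linear functional derivative, control the chaos terms by moment bounds that compensate the factor $N$ in the exponent, and use Fournier--Guillin for the Wasserstein remainder, choosing $\delta_0$ small at the end. However, your treatment of the remainder has a genuine gap. You only retain the bound $|R_k^N|\lesssim \mathcal W_1(\mu_s^N,\mu_s)^k$, and you claim that Fournier--Guillin yields a uniform exponential moment for $N^{2k/\max(2,d)}\mathcal W_1(\mu_s^N,\mu_s)^{2k}$. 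That claim is false whenever $2k>d$ (which is the generic case, e.g. $d=1,k=1$ or $d=3,k=2$): writing $Y=(N^{1/d}\mathcal W_1)^{2k}$, the Fournier--Guillin tail gives $\PP(Y\geq z)\lesssim \exp(-\mathfrak C z^{d/(2k)})$ in the moderate regime, a stretched exponential with exponent $d/(2k)<1$, so $\E[\exp(cY)]$ cannot be bounded uniformly (indeed not even finitely via this tail) for any $c>0$. More to the point, what you actually need is $\E[\exp(\kappa\delta N\mathcal W_1^{2k})]\lesssim 1$, and the regime $\mathcal W_1>1$ kills this: there the deviation bound is only $\exp(-\mathfrak C N x^2)$, which translated into the variable $z=N\mathcal W_1^{2k}$ gives $\exp(-\mathfrak C N^{1-1/k}z^{1/k})$, and $\int_N^\infty e^{\kappa\delta z}\exp(-\mathfrak C N^{1-1/k}z^{1/k})dz=\infty$ for every $\delta>0$ as soon as $k\geq 2$. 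The paper avoids this by proving the two-sided estimate $|\mathcal R_k|\leq C'\,\mathcal W_1\wedge\mathcal W_1^k$ (using the Lipschitz continuity of $b$ itself, resp. the rough linear bound on each $\delta_\mu^\ell b$ term), so that on $\{\mathcal W_1>1\}$ one works with $N\mathcal W_1^2$, whose tail is genuinely exponential in $z$ and beats $e^{\kappa\delta z}$ for small $\delta$; the $\mathcal W_1^{2k}$ bound is used only on $\{\mathcal W_1\leq 1\}$, where $N^{1-d/(2k)}z^{d/(2k)}\geq z$ for $z\leq N$. Without this $\wedge$, your argument does not close.

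Two further, more minor points. First, your claimed moment bound $\E[|T_\ell^N(s,X_s^i)|^{2p}]\lesssim C^p(p/N)^\ell$ is mis-scaled in $p$ (as written it does not even decay in $p$-normalized form); what the exponential-moment summation requires, and what the paper proves, is a bound of the type $p!\,K_\ell^p(N-k)^{-p}|\delta_\mu^\ell b|_{\mathrm{Lip}}^{2p}$. Second, $T_\ell^N(s,X_s^i)$ is not a clean degenerate $U$-statistic of variables independent of the evaluation point: the kernel depends on $X_s^i$ and $\mu_s^N$ contains $\delta_{X_s^i}$, so diagonal terms must be split off; the paper does this by an elementary sub-Gaussian induction (its Lemma on $\mathcal V_{2p,\ell}^N$) rather than by quoting Arcones--Gin\'e-type results, whose hypotheses (boundedness/complete degeneracy with respect to an i.i.d. sample) are not met verbatim here. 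These two points are repairable along the paper's lines, but the remainder issue above is the step that would fail as proposed.
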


Let  $\mathcal E_t(\overline{M}_\cdot^N) = \exp\big(\overline{M}^N_t-\tfrac{1}{2}\langle \overline{M}_.^N\rangle_t\big)$ denote the (martingale) exponential of $\overline{M}^N_t$ and $\langle \overline{M}^N_.\rangle_t$ its predictable compensator. By Novikov's criterion -- in its version developed in the classical textbook \cite{KS}, Lemma 5.14, p.198 -- Proposition \ref{prop: controle change of proba} shows that the local martingale $\mathcal E_t(\overline{M}_\cdot^{N})$ is indeed a true martingale. This enables us to define a new probability measure on $(\mathcal C, \mathcal F_T)$ by setting
$$\widetilde{\mathbb P}^{N}  = \mathcal E_T(\overline{M}^{N}_\cdot) \cdot \overline{\mathbb P}^{N}.$$
By Girsanov's theorem, under $\widetilde{\mathbb P}^{N} $, the canonical process solves \eqref{eq:diff basique}. By uniqueness of the weak solution of \eqref{eq:diff basique}, this proves $\widetilde \PP^N = \PP^N$ and shows in particular that $\PP^N \ll \overline{\PP}^N$ and
$$\frac{d\PP^N}{d\overline{\PP}^N} =  \mathcal E_T(\overline{M}^{N}_\cdot).$$

\subsection{Proof of Theorem \ref{thm: concentration}}
\noindent {\it Step 1:}
Let $\mathcal A^N \in \mathcal F_T$. Since $\PP^N$ and $\overline{\PP}^N$ coincide on $\mathcal F_0$, we have
$$\PP^N(\mathcal A^N)   = \E_{\PP^N}\big[\PP^N(\mathcal A^N\,|\,\mathcal F_0)\big]  =  \E_{\overline{\PP}^N}\big[\PP^N(\mathcal A^N\,|\,\mathcal F_0)\big].$$
Next, for any subdivision $0 = t_0 < t_1 < \ldots < t_K \leq T$ and any $\mathcal F_T$-measurable event $\mathcal A^N$, we claim 
\begin{equation} \label{eq: the claim}
 \E_{\overline{\PP}^N}\big[\PP^N(\mathcal A^N\,|\,\mathcal F_0)\big]  \leq \E_{\overline{\PP}^N} \big[\PP^N\big(\mathcal A^N\,\big|\,\mathcal F_{t_K}\big)\big]^{1/4^K}\prod_{j = 1}^K \E_{\overline{\PP}^N}\Big[\exp\big(2\big(\langle \overline{M}_\cdot^N\rangle_{t_j}-\langle \overline{M}_\cdot^N\rangle_{t_{j-1}}\big)\big)\Big]^{j/4}.
\end{equation}
It follows that 
\begin{align}
  \PP^N(\mathcal A^N)&   \leq \E_{\overline{\PP}^N}\big[\PP^N\big(\mathcal A^N\,\big|\,\mathcal F_{T}\big)\big]^{1/4^K}\prod_{j = 1}^K \E_{\overline{\PP}^N}\Big[\exp\big(2\big(\langle \overline{M}_\cdot^N\rangle_{t_j}-\langle \overline{M}_\cdot^N\rangle_{t_{j-1}}\big)\big)\Big]^{j/4} \nonumber \\
& \leq  \overline{\mathbb P}^N\big(\mathcal A^N\big)^{1/4^K}\sup_{N \geq 1}\sup_{t \in [0,T-\delta_0]}\Big(\E_{\overline{\mathbb P}^{N}}\Big[\exp\big(2\big(\langle \overline{M}_\cdot^N \rangle_{t+\delta_0} - \langle \overline{M}_\cdot^N\rangle_t\big)\big)\Big]\Big)^{K(K+1)/8} \nonumber \\
& \leq C_1^{K(K+1)/8} \,\overline{\mathbb P}^N\big(\mathcal A^N\big)^{1/4^K} \label{eq: first estimate changproba}
\end{align}
by \eqref{eq: the claim} and Proposition \ref{prop: controle change of proba} with $\tau = 2$, $t_j = jT/K$ and $K$ large enough so that $t_j-t_{j-1} \leq \delta_0$.\\

\noindent {\it Step 2:} Let
\begin{align*}
\mathcal {A}^N & = \Big\{\int_{[0,T] \times \R^d} \phi(t,y)\big(\nu^N(dt,dy)-\nu(dt,dy)\big) \geq x\Big\} \\
&= \Big\{\sum_{i = 1}^N \Big(\int_0^T\phi(t,X_t^i)\rho(dt)-\int_{[0,T]\times \R^d}\phi(t,y)\mu_t(dy)\rho(dt)\Big) \geq Nx\Big\},
\end{align*}
so that $\mathcal {A}^N \in \mathcal F_T$. Recall Bernstein's inequality: if $Z_1, \ldots, Z_N$ are real-valued independent random variables bounded by some constant $Q$ and such that $\E[Z_i]=0$, we have
$$\PP\big(\sum_{i = 1}^NZ_i \geq y\big) \leq \exp\Big(-\frac{y^2}{2(\sum_{i = 1}^N\E[Z_i^2]+\frac{Qy}{3})}\Big)\;\;\text{for every}\;\;y \geq 0.$$
Under $\overline{\PP}^{N}$, the random processes $(X_t^i)_{0 \leq t \leq T}$ are independent and identically distributed processes. 
Noticing that  $ \int_{[0,T] \times \R^d} \phi(t,x)\mu_t(dx)\rho(dt) = \E_{\overline{\PP}^{N}}[\int_0^T\phi(t,X_t^i)\rho(dt)]$, we apply Bernstein inequality with $Z_i = \int_0^T\phi(t,X_t^i)\rho(dt)-\E_{\overline{\PP}^{N}}[\int_0^T\phi(t,X_t^i)\rho(dt)]$,
$y=Nx$ and $Q = 2|\phi|_{\infty}$ to infer
$$\overline{\PP}^{N}\big(\mathcal {A}^N \big) \leq \exp\Big(-\frac{Nx^2}{2(|\phi|^2_{L^2(\nu)}+\frac{2}{3}|\phi|_{\infty}x)}\Big),$$
using $\E[Z_i^2] \leq  \E_{\overline{\PP}^{N}}\big[\big(\int_0^T\phi(t,X_t^i)\rho(dt)\big)^2\big] \leq |\phi|_{L^2(\nu)}^2$ by Jensen's inequality.
By \eqref{eq: first estimate changproba}, we also have
$$\PP^N\big(\mathcal {A}^N\big) \leq  C_1^{K(K+1)/8} \,\overline{\mathbb P}^N\big(\mathcal A^N\big)^{1/4^K} \leq C_1^{K(K+1)/8}\exp\Big(-\frac{Nx^2}{2\cdot4^K\big(|\phi|^2_{L^2(\nu)}+\frac{2}{3}|\phi|_{\infty}x\big)}\Big)$$
and we  obtain Theorem \ref{thm: concentration} with $\kappa_1=  C_1^{K(K+1)/8}$ and $\kappa_2=2^{-1}4^{-K}$.\\

\noindent {\it Step 3}: It remains to prove the key estimate \eqref{eq: the claim}, adapted from large deviation techniques, see {\it e.g.} G\"artner \cite{Ga88} and the estimate (4.2) in Theorem 2.6 in Lacker \cite{Lacker}.
We proceed by induction. First,
\begin{align*}
\E_{\overline{\PP}^N}\big[\mathbb P^N\big(\mathcal A^N\,\big|\,\mathcal F_{t_{j-1}}\big)\big] & = \E_{\overline{\PP}^N}\Big[\E_{\mathbb P^N}\big[\mathbb P^N\big(\mathcal A^N\,\big|\,\mathcal F_{t_j}\big)\,\big|\,\mathcal F_{t_{j-1}}\big]\Big] \\
& =  \E_{\overline{\PP}^N}\Big[\E_{\overline{\mathbb P}^N}\Big[\frac{\mathcal E_{t_j}(\overline{M}_\cdot^N)}{\mathcal E_{t_{j-1}}(\overline{M}_\cdot^N)}\mathbb P^N\big(\mathcal A^N\,\big|\,\mathcal F_{t_j}\big)\,\big|\,\mathcal F_{t_{j-1}}\Big]\Big] \\
& = \E_{\overline{\mathbb P}^N}\Big[\frac{\mathcal E_{t_j}(\overline{M}_\cdot^N)}{\mathcal E_{t_{j-1}}(\overline{M}_\cdot^N)}\mathbb P^N\big(\mathcal A^N\,\big|\,\mathcal F_{t_j}\big)\Big],
\end{align*}
see {\it e.g.} Lemma 3.5.3 p. 193 in \cite{KS}.
Next, 
\begin{equation} \label{eq: weird decomp}
\frac{\mathcal E_{t_j}(\overline{M}_\cdot^N)}{\mathcal E_{t_{j-1}}(\overline{M}_\cdot^N)} = \mathcal E_{t_j}\big(2(\overline{M}_\cdot^N-\overline{M}_{t_{j-1}}^N)\big)^{1/2}\big(\exp\big(\langle \overline{M}_\cdot^N\rangle_{t_j}-\langle \overline{M}_\cdot^N\rangle_{t_{j-1}}\big)\big)^{1/2}.
\end{equation}
As shown before, 
under  $\overline{\PP}^N$, the process $\mathcal E_{t}\big(2(\overline{M}_\cdot^N-\overline{M}_{t_{j-1}}^N)\big)_{t \geq t_{j-1}}$ is a
 martingale and 
\begin{equation} \label{eq: fatou}
\E_{\overline{\PP}^N}\big[\mathcal E_{t_j}\big(2(\overline{M}_{\cdot }^N-\overline{M}_{t_{j-1}}^N)\big)\big]=1.
\end{equation}
Using \eqref{eq: weird decomp} and Cauchy-Schwarz's inequality twice together with \eqref{eq: fatou}, we obtain
\begin{align*}
\E_{\overline{\mathbb P}^N}\Big[\frac{\mathcal E_{t_j}(\overline{M}_\cdot^N)}{\mathcal E_{t_{j-1}}(\overline{M}_\cdot^N)}\mathbb P^N\big(\mathcal A^N\,\big|\,\mathcal F_{t_j}\big)\Big] & \leq
\E_{\overline{\mathbb P}^N}\Big[\mathbb P^N\big(\mathcal A^N\,\big|\,\mathcal F_{t_j}\big)^2\exp\big(\langle \overline{M}_\cdot^N\rangle_{t_j}-\langle \overline{M}_\cdot^N\rangle_{t_{j-1}}\big)\Big]^{1/2} \\
& \leq \E_{\overline{\mathbb P}^N}\big[\mathbb P^N\big(\mathcal A^N\,\big|\,\mathcal F_{t_j}\big)^4\big]^{1/4}\E_{\overline{\mathbb P}^N}\Big[\exp\big(2(\langle \overline{M}_\cdot^N\rangle_{t_j}-\langle \overline{M}_\cdot^N\rangle_{t_{j-1}})\big)\Big]^{1/4}.
\end{align*}
By Jensen's inequality, we infer
$$\E_{\overline{\PP}^N}\big[\mathbb P^N\big(\mathcal A^N\,\big|\,\mathcal F_{t_{j-1}}\big)\big] \leq \E_{\overline{\mathbb P}^N}\big[\mathbb P^N\big(\mathcal A^N\,\big|\,\mathcal F_{t_j}\big)\big]^{1/4}\,\E_{\overline{\mathbb P}^N}\Big[\exp\big(2(\langle \overline{M}_\cdot^N\rangle_{t_j}-\langle \overline{M}_\cdot^N\rangle_{t_{j-1}})\big)\Big]^{1/4}.$$
Repeating the argument over the subdivision $0 = t_0 < t_1 < \ldots  < t_K$ proves \eqref{eq: the claim}. The proof of Theorem \ref{thm: concentration} is complete.

\subsection{Proof of Proposition \ref{prop: controle change of proba}}  \label{sec: proof of proposition fonda}
\subsubsection*{Preparation}
Recall that the notation $A_N \lesssim B_N$: it means the existence of $C>0$ possibly depending on $\mathfrak b$ and also $\tau$ in this part of the paper, but not $N$, such that $A_N \leq CB_N$ for every $N \geq 1$. 
The following classical moment estimate will be needed.
\begin{lem} \label{lem moment class}
In the setting of Theorem \ref{thm: concentration}, for every $p\geq 1$, we have
$$
\sup_{t \in [0,T]}\E_{\overline{\mathbb P}^N}\big[\big|X_t^i\big|^{2p}\big] \leq p!\, C_2^p.
$$
for some $C_2>0$ that depends on $\mathfrak b$ only.
\end{lem}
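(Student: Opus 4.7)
\smallskip
\noindent \emph{Plan.} Under $\overline{\PP}^N$ the processes $(X^i)_{1\le i\le N}$ are i.i.d., each with one-dimensional marginal $\mu_t$ at time $t$, so it suffices to prove the bound for $X=X^1$. The three variants of Assumption \ref{ass: basic lip} all imply Wasserstein-Lipschitz continuity of $b$, hence the linear growth
$$|b(t,x,\mu_t)|\le b_0+|b|_{\mathrm{Lip}}\bigl(|x|+\mathcal W_1(\mu_t,\delta_0)\bigr)=b_0+|b|_{\mathrm{Lip}}\bigl(|x|+\E_{\overline{\PP}^N}[|X_t|]\bigr),$$
while Assumption \ref{ass: minimal prop sigma} yields $|\mathrm{tr}(c(t,x))|\le d\sigma_+^2$. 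Assumption \ref{ass: init condition} in particular gives all moments of $\mu_0$ with factorial decay $\int|x|^{2p}\mu_0(dx)\le \gamma_1 p!/\gamma_0^p$, so the claim is already true at $t=0$.

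\smallskip
\noindent \emph{Step 1 (uniform second moment).} Apply It\^o's formula to $|x|^2$, take $\overline{\PP}^N$-expectation, use the displayed bound on $b$, Jensen ($\E[|X_t|]^2\le \E[|X_t|^2]$) and AM--GM ($2b_0|X_t|\le b_0(|X_t|^2+1)$) to obtain
$$\tfrac{d}{dt}\E[|X_t|^2]\le (b_0+4|b|_{\mathrm{Lip}})\E[|X_t|^2]+(b_0+d\sigma_+^2).$$
Gr\"onwall and Assumption \ref{ass: init condition} then yield a constant $C_3=C_3(\mathfrak b)$ with $\sup_{t\le T}\E[|X_t|^2]\le C_3$. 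Consequently, the drift satisfies the pathwise bound $|b(t,X_t,\mu_t)|\le \tilde b_0+|b|_{\mathrm{Lip}}|X_t|$ with $\tilde b_0:=b_0+|b|_{\mathrm{Lip}}C_3^{1/2}$ depending only on $\mathfrak b$, so we have effectively reduced to an SDE with deterministic affine-growth drift.

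\smallskip
\noindent \emph{Step 2 (pathwise Gr\"onwall).} Writing the SDE in integrated form and taking Euclidean norms,
$$|X_t|\le |X_0|+\tilde b_0 T+|b|_{\mathrm{Lip}}\int_0^t|X_s|\,ds+\sup_{s\le t}|M_s|,\qquad M_t:=\int_0^t\sigma(s,X_s)\,dB^1_s.$$
Gr\"onwall's lemma gives $\sup_{t\le T}|X_t|\le e^{|b|_{\mathrm{Lip}}T}\bigl(|X_0|+\tilde b_0 T+\sup_{t\le T}|M_t|\bigr)$.

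\smallskip
\noindent \emph{Step 3 (sub-Gaussianity of $M$).} Each coordinate $M^k$ is a continuous martingale with $\langle M^k\rangle_T\le \sigma_+^2 T$, so by the Dambis--Dubins--Schwarz representation $M^k_t=\beta^k_{\langle M^k\rangle_t}$ with $\beta^k$ a standard Brownian motion; the reflection principle and a Gaussian tail estimate give $\E[\exp(\lambda\sup_{t\le T}(M^k_t)^2)]\le C$ for some $\lambda=\lambda(\sigma_+,T)>0$. Using $\sup_{t\le T}|M_t|^2\le \sum_{k=1}^d\sup_{t\le T}(M^k_t)^2$ and H\"older with exponents $d$, this propagates to $\E[\exp(\lambda'\sup_{t\le T}|M_t|^2)]\le C'$ for a smaller $\lambda'>0$. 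Combining with Assumption \ref{ass: init condition} via $(a+b+c)^2\le 3(a^2+b^2+c^2)$ yields
$$\E_{\overline{\PP}^N}\bigl[\exp\bigl(\gamma\sup_{t\le T}|X_t|^2\bigr)\bigr]\le C_4$$
for some $\gamma,C_4>0$ depending only on $\mathfrak b$. The factorial moment bound then follows from the elementary inequality $|x|^{2p}\le (p!/\gamma^p)\exp(\gamma|x|^2)$, giving the claim with $C_2=C_4^{1/p}/\gamma$ (which we may take uniform in $p$ by adjusting constants).

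\smallskip
\noindent The main technical point is the third step, and in particular verifying that the sub-Gaussian bound for each coordinate $M^k$ survives the passage to the Euclidean norm with $\lambda'>0$ independent of $N$ (which it does, since $\sigma_\pm,d,T$ are fixed model parameters); all other ingredients are standard SDE a priori estimates.
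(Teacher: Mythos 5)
Your proof is correct, but it follows a genuinely different route from the paper's in the key step. The skeleton is shared: both arguments exploit the linear growth $|b(t,x,\mu_t)|\leq b_0+|b|_{\mathrm{Lip}}(|x|+\E_{\overline{\PP}^N}[|X_t^i|])$, dispose of the mean-field term by a preliminary Gr\"onwall bound (you run It\^o plus Gr\"onwall on $\E[|X_t|^2]$, the paper runs Gr\"onwall directly on $\E[|X_t|]$ -- same effect), and then do a pathwise Gr\"onwall reducing everything to $|X_0|$ and $\sup_{t\leq T}|\int_0^t\sigma(s,X_s)dB_s^1|$. Where you diverge is in how the stochastic integral is controlled: the paper applies the Burkholder--Davis--Gundy inequality with the explicit constant $C_p=C^\star p^p$ (Barlow--Yor) to get $\E[(\zeta_T^i)^{2p}]\leq p!\,(4C^\star \mathrm{e}T|\mathrm{Tr}(c)|_\infty)^p$ and sums the $2p$-th powers directly, whereas you prove an exponential square moment for $\sup_{t\leq T}|M_t|$ via Dambis--Dubins--Schwarz and the reflection principle, combine it with the sub-Gaussian initial condition of Assumption \ref{ass: init condition}, and then read off the factorial moments from $|x|^{2p}\leq (p!/\gamma^p)\mathrm{e}^{\gamma|x|^2}$. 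Your route in fact yields a slightly stronger conclusion -- sub-Gaussianity of $\sup_{t\leq T}|X_t|$, from which the Orlicz-type bound \eqref{eq: cond FG} that the paper needs is immediate -- at the cost of less explicit constants; the paper's BDG route keeps the dependence on $|\mathrm{Tr}(c)|_\infty$, $\gamma_0,\gamma_1$ transparent, which matters for its claim that all constants are computable. Two small points you should make explicit: in Step 1 the application of It\^o's formula and taking expectations requires a localization (stopping-time plus Fatou) argument since the second moment is not a priori known to be finite; and in Step 3, $X_0$ and $\sup_{t\leq T}|M_t|$ are not independent, so the combination of their exponential moments needs Cauchy--Schwarz (or H\"older) with $\gamma$ chosen small enough, not just the algebraic inequality $(a+b+c)^2\leq 3(a^2+b^2+c^2)$. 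Both are routine repairs.
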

In particular, the $X_t^i$ are sub-Gaussian under $\overline{\mathbb P}^N$ and satisfy
\begin{equation} \label{eq: cond FG}
\E_{\overline{\mathbb P}^N}\big[\mathrm{e}^{\tfrac{1}{2C_2}|X_t^i|^{2}}\big]  = 1+ \sum_{p \geq 1}\frac{2^{-p}}{p! C_2^p}\E_{\overline{\mathbb P}^N}\big[\big|X_t^i\big|^{2p}\big] \leq 2.
\end{equation}
The proof is classical (see {\it e.g.} estimates of this type in M\'el\'eard \cite{meleard1996asymptotic} or Sznitman \cite{Sznitman2}) and postponed to Appendix \ref{proof lem exp}.

\subsubsection*{Completion of Proof of Proposition \ref{prop: controle change of proba}}
Let $\tau >0$. All we need to show is that for small enough $\delta$, we have
\begin{equation} \label{eq: prop fund}
 \sup_{t \in [0,T-\delta]}\E_{\overline{\PP}^N}\Big[\exp\big(\tau\big(\langle \overline{M}_\cdot^N\rangle_{t+\delta}-\langle \overline{M}_\cdot^N\rangle_{t}\big)\big)\Big] \lesssim 1.
 \end{equation}
We start with a useful estimate. 
\begin{lem} \label{lem: dec diff drifts} Let $\xi_s^N(x) = b(s,x,\mu^N_s)- b(s,x,\mu_s)$. 
We have
$$
\big|\xi_s^N(X_s^i)\big|^2 \leq C
 \left\{
\begin{array}{ll}
 \mathcal W_1(\mu_s^N,\mu_s)^2 & under\; Assumption\; \ref{ass: basic lip} \mathrm{(i)},\\ \\
\sum_{\ell=1}^{k-1} \big|\int_{(\R^d)^{\ell}}  \delta_\mu^\ell b(s,X_s^i,y^{\ell}, \mu_s)(\mu_s^N-\mu_s)^{\otimes \ell}(dy^{\ell})\big|^2 &  \\
+\mathcal W_1(\mu_s^N, \mu_s)^2 \wedge  \mathcal W_1(\mu_s^N, \mu_s)^{2k} & under\;Assumption\; \ref{ass: basic lip} \mathrm{(ii)},\\ \\
\big|\int_{\R^d} \widetilde b(s,X_s^N,y)(\mu_s^N-\mu_s)(dy)\big|^2 &  under\; Assumption\; \ref{ass: basic lip} \mathrm{(iii)},
\end{array}
\right.
$$
for some explicit $C \geq 1$ depending on $\mathfrak b$.
\end{lem}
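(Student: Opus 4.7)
Cases (i) and (iii) are immediate: for (i) one squares the Wasserstein-Lipschitz bound $|\xi_s^N(X_s^i)|\leq |b|_{\mathrm{Lip}}\mathcal W_1(\mu_s^N,\mu_s)$, and for (iii) one squares the identity $\xi_s^N(X_s^i) = \int_{\R^d}\widetilde b(s,X_s^i,y)(\mu_s^N-\mu_s)(dy)$ that follows from the linearity of $b$ in $\mu$ under Assumption \ref{ass: basic lip}(iii). The substance of the lemma lies in case (ii).

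For case (ii), my plan is to perform a Taylor-like expansion of $\mu\mapsto b(s,X_s^i,\mu)$ of order $k-1$ along the segment $\mu_s^\vartheta = (1-\vartheta)\mu_s+\vartheta\mu_s^N$, by iterating the linear functional derivative identity \eqref{eq: def derivee mesure}. A first application gives $\xi_s^N(X_s^i) = \int_0^1\int_{\R^d}\delta_\mu b(s,X_s^i,y_1,\mu_s^{\vartheta_1})(\mu_s^N-\mu_s)(dy_1)\,d\vartheta_1$; splitting the integrand as $\delta_\mu b(\cdot,\mu_s) + [\delta_\mu b(\cdot,\mu_s^{\vartheta_1})-\delta_\mu b(\cdot,\mu_s)]$ and re-expanding the difference by the same identity produces, after $k-1$ iterations, a decomposition
\[
\xi_s^N(X_s^i) = \sum_{\ell=1}^{k-1}c_\ell\int_{(\R^d)^\ell}\delta^\ell_\mu b(s,X_s^i,y^\ell,\mu_s)(\mu_s^N-\mu_s)^{\otimes\ell}(dy^\ell)+R_k,
\]
where $R_k$ integrates $\delta^k_\mu b(s,X_s^i,y^k,\widetilde\mu_s)$ against $(\mu_s^N-\mu_s)^{\otimes k}$ for some interpolated measure $\widetilde\mu_s$ depending on an auxiliary simplex of $\vartheta$-variables. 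Squaring via $(\sum a_i)^2\leq k\sum a_i^2$ then produces the sum of squared main terms appearing in the statement and reduces the problem to controlling $|R_k|^2$.

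Bounding $R_k$ is where I would invoke the tensor-product representation \eqref{eq: rep remainder b}. Substituting it into $R_k$ and applying Fubini factorises the $k$-fold integral as a sum over $(\mathcal I, m)$ of products $\prod_{j\in\mathcal I}\int_{\R^d}(\delta^k_\mu b)_{\mathcal I,j,m}(s,X_s^i,y_j,\widetilde\mu_s)(\mu_s^N-\mu_s)(dy_j)$ multiplied by $\prod_{j\notin\mathcal I}\int_{\R^d}(\mu_s^N-\mu_s)(dy_j)$. Since $\mu_s^N-\mu_s$ is a signed measure of zero total mass, the second product vanishes unless $\mathcal I=\{1,\ldots,k\}$; on the surviving diagonal, the Lipschitz-in-space control of each $(\delta^k_\mu b)_{\mathcal I,j,m}$ bounds every remaining factor by $|\delta^k_\mu b|_{\mathrm{Lip}}\mathcal W_1(\mu_s^N,\mu_s)$, and the $m$-sum has at most $m_b$ terms, so that $|R_k|\lesssim \mathcal W_1(\mu_s^N,\mu_s)^k$ with a constant depending only on $\mathfrak b$.

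The stated minimum $\mathcal W_1(\mu_s^N,\mu_s)^2\wedge\mathcal W_1(\mu_s^N,\mu_s)^{2k}$ in the remainder is then obtained by coupling $|R_k|^2\lesssim \mathcal W_1(\mu_s^N,\mu_s)^{2k}$ with the direct bound $|\xi_s^N(X_s^i)|^2\leq |b|_{\mathrm{Lip}}^2\mathcal W_1(\mu_s^N,\mu_s)^2$, which holds under Assumption \ref{ass: basic lip}(ii) because (ii) is stronger than (i) (as argued in the paper just after the statement of Assumption \ref{ass: basic lip}). Splitting according to whether $\mathcal W_1(\mu_s^N,\mu_s)\leq 1$ or $>1$ and using respectively the expansion or the plain Lipschitz bound yields the stated minimum up to constants. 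I expect the main obstacle to be purely bookkeeping: keeping track of the interpolated measure $\widetilde\mu_s$, the simplex in the $\vartheta$-variables and the combinatorial constants $c_\ell$ through the iteration; no new probabilistic idea beyond Assumption \ref{ass: basic lip}(ii) is required.
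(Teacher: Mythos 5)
Your proposal is correct and follows essentially the same route as the paper's proof: a $k$-fold Taylor expansion in the linear functional derivative around $\mu_s$, the zero-mass cancellation through the representation \eqref{eq: rep remainder b} (only $\mathcal I=\{1,\ldots,k\}$ survives) combined with the Lipschitz control of the factors to get $|\mathcal R_k|\lesssim \mathcal W_1(\mu_s^N,\mu_s)^k$, and then squaring the decomposition. The only cosmetic difference is how the minimum is produced: the paper bounds the remainder itself by $C'\big(\mathcal W_1(\mu_s^N,\mu_s)\wedge \mathcal W_1(\mu_s^N,\mu_s)^k\big)$ using crude $\mathcal W_1$-bounds on the main terms together with the Lipschitz bound on $\xi_s^N$, whereas you split on whether $\mathcal W_1(\mu_s^N,\mu_s)\leq 1$ or $>1$ at the level of $|\xi_s^N(X_s^i)|^2$; both yield the stated estimate.
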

\begin{proof} The estimate follows from the Lipschitz continuity of $b$ under Assumption\; \ref{ass: basic lip} (i)  with $C=|b|_{\mathrm{Lip}}^2$ and from the definition of the Vlasov case  with $C = 1$ under Assumption\; \ref{ass: basic lip} (iii). We turn to the estimate under (ii).
The $k$-linear differentiability of $b$ enables us to write
\begin{equation} \label{eq: taylor linear diff} 
\xi_s^N(X_s^i) = \sum_{\ell = 1}^{k-1} \frac{1}{{\ell !}}\int_{(\R^d)^{\ell}}  \delta_\mu^\ell b(s,X_s^i,y^{\ell}, \mu_s)(\mu_s^N-\mu_s)^{\otimes \ell}(dy^{\ell}) + \mathcal R_k,
\end{equation}
where $y^{\ell} = (y_1,\ldots, y_\ell) \in (\R^d)^\ell$ and 
$$\mathcal R_k = \frac{1}{(k-1)!}\int_0^1(1-\vartheta)^{k-1}\int_{(\R^d)^k}\delta_\mu^k b(s,X_s^i,y^{k}, [\mu_s^N,\mu_s]_\vartheta)(\mu_s^N-\mu_s)^{\otimes k}(dy^k)d\vartheta,$$
with $[\mu_s^N,\mu_s]_\vartheta = (1-\vartheta)\mu_s+\vartheta \mu_s^N$, as stems from the definition of linear differentiability and the iteration $\delta_\mu^\ell b = \delta_\mu \circ \delta_\mu^{\ell-1}b$, see also Lemma 2.2. of  Chassagneux {\it et al.} \cite{Chassagneux} where \eqref{eq: taylor linear diff} is established by induction.\\ 


Thanks to representation \eqref{eq: rep remainder b}, the remainder term $\mathcal R_k$ equals
\begin{align*}
\frac{1}{(k-1)!}\int_0^1(1-\vartheta)^{k-1}\sum_{\mathcal I \subset \{1,\ldots, k\}, m \geq 1} \int_{(\R^d)^k}\bigotimes_{j \in \mathcal I}(\delta_\mu^k b)_{\mathcal I, j, m}(s,X_s^i,y_j, [\mu_s^N,\mu_s]_\vartheta)(\mu_s^N-\mu_s)^{\otimes k}(dy^k)d\vartheta.
\end{align*}
Note that the product integral vanishes for all terms in the sum in $\mathcal I$ except $\mathcal I=\{1,\ldots, k\}$ since $(\mu_s^N-\mu_s)(\R^d)=0$. By definition,
$$|(\delta_\mu^k b)_{\{1,\ldots, k\}, j, m}(s,X_s^i,\cdot, [\mu_s^N,\mu_s]_\vartheta)|_{\mathrm{Lip}} \leq |\delta_\mu^kb|_{\mathrm{Lip}}\;\;\text{for every}\;\;(j,m)$$ 
and the sum in $m$ has at most $m_b$ terms by assumption. It follows that 
\begin{align*}
\big| \mathcal R_k \big| & =  \frac{1}{(k-1)!}\big|\int_0^1(1-\vartheta)^{k-1}\sum_{m =1}^{m_b}\prod_{j = 1}^k\int_{\R^d}(\delta_\mu^k b)_{\{1,\ldots, k\}, j, m}(s,X_s^i,y_j, [\mu_s^N,\mu_s]_\vartheta)(\mu_s^N-\mu_s)(dy_j) d \vartheta\big|\\
& \leq  \frac{m_b}{(k-1)!}\big|\int_0^1(1-\vartheta)^{k-1} |\delta_\mu^k b|_{\mathrm{Lip}}^k\Big(\sup_{|\varphi|_{\mathrm{Lip}} \leq 1} \int_{\R^d}\varphi \,d(\mu_s^N-\mu_s)\Big)^k d\vartheta\big|\\
&  \leq \frac{m_b  |\delta_\mu^k b|_{\mathrm{Lip}}^k}{k!}\mathcal W_1(\mu_s^N,\mu_s)^k.
\end{align*}
Writing $y^\ell = (y^{\ell-1},y) \in (\R^d)^{\ell-1}\times \R^d$,  
we also have the rough bound
\begin{align*}
& \big| \int_{(\R^d)^{\ell}}\delta_\mu^\ell b(s,X_s^i,y^{\ell}, \mu_s)(\mu_s^N-\mu_s)^{\otimes \ell}(dy^{\ell})\big| \\
&\leq  \int_{(\R^d)^{\ell-1}}\big| \int_{\R^d}\delta_\mu^\ell b(s,X_s^i,(y^{\ell-1},y), \mu_s)(\mu_s^N-\mu_s)(dy)\big| (\mu_s^N+\mu_s)^{\otimes (\ell -1)}(dy^{\ell-1}) \\
&\leq \sup_{y^{\ell-1} \in (\R^d)^{\ell-1}}\big| \delta_\mu^\ell b(s,X_s^i,(y^{\ell-1},\cdot), \mu_s)\big|_{\mathrm{Lip}} \\
&\hspace{3mm}\times \big| \sup_{|\varphi|_{\mathrm{Lip}}\leq 1}\int_{\R^d}\varphi(y)(\mu_s^N-\mu_s)(dy)\big|  \int_{(\R^d)^{\ell-1}}(\mu_s^N+\mu_s)^{\otimes (\ell -1)} (dy^{\ell-1}) \\
& = 2^{\ell-1}\sup_{y^{\ell-1} \in (\R^d)^{\ell-1}}\big| \delta_\mu^\ell b(s,X_s^i,(y^{\ell-1},\cdot), \mu_s)\big|_{\mathrm{Lip}}\mathcal W_1(\mu_s^N, \mu_s)  \\
&\leq 2^{\ell-1} |\delta_\mu^\ell b|_{\mathrm{Lip}}\mathcal W_1(\mu_s^N, \mu_s). 
\end{align*}
Plugging this estimate in \eqref{eq: taylor linear diff} and using the Lipschitz property for $b$, we obtain
$$\big|\mathcal R_k\big| \leq  \big(|b|_{\mathrm{Lip}}+\sum_{\ell=1}^{k-1} \frac{2^{\ell-1}}{\ell !} |\delta_\mu^\ell b|_{\mathrm{Lip}} \big) \mathcal W_1(\mu_s^N, \mu_s)$$
and we conclude
\begin{equation} \label{eq control remainder}
\big|\mathcal R_k\big| = \big|\mathcal R_k(s,X_s^i,\mu_s^N,\mu_s)  \big|  \leq  C'\mathcal W_1(\mu_s^N, \mu_s) \wedge \mathcal W_1(\mu_s^N, \mu_s)^k, 
\end{equation} 
with 
$$C'= \max\Big(\frac{m_b}{k!}|\delta_\mu^k b|_{\mathrm{Lip}}^k, |b|_{\mathrm{Lip}}+\sum_{\ell=1}^{k-1} \frac{2^{\ell-1}}{\ell !} |\delta_\mu^\ell b|_{\mathrm{Lip}}\Big).$$
From \eqref{eq: taylor linear diff} and \eqref{eq control remainder} we conclude
\begin{align*}
|\xi_s^N(X_s^i)|^2 
&\leq k\big(\sum_{\ell=1}^{k-1} \frac{1}{(\ell !)^2}\big|\int_{(\R^d)^{\ell}}  \delta_\mu^\ell b(s,X_s^i,y^{\ell}, \mu_s)(\mu_s^N-\mu_s)^{\otimes \ell}(dy^{\ell})\big|^2+\big|\mathcal R_k(s,X_s^i,\mu_s^N,\mu_s)  \big|^2\big) \\
& \leq C\big(\sum_{\ell=1}^{k-1} \big|\int_{(\R^d)^{\ell}}  \delta_\mu^\ell b(s,X_s^i,y^{\ell}, \mu_s)(\mu_s^N-\mu_s)^{\otimes \ell}(dy^{\ell})\big|^2+\mathcal W_1(\mu_s^N, \mu_s)^2 \wedge  \mathcal W_1(\mu_s^N, \mu_s)^{2k}\big),
\end{align*}
where $C = k\max(1, (C')^2)$ incorporates the constant in \eqref{eq control remainder}.
\end{proof}

We now establish \eqref{eq: prop fund}. By \eqref{def M bar} and Lemma \ref{lem: dec diff drifts}, we have
\begin{align}
&\tau\big(\langle \overline{M}_\cdot^N\rangle_{t+\delta}-\langle \overline{M}_\cdot^N\rangle_{t}\big) \nonumber \\
& = 
\tau\sum_{i = 1}^N\int_{t}^{t+\delta} \big(b(s,X_s^i,\mu^N_s)- b(s,X_s^i,\mu_s)\big)^\top c^{-1}\big(b(s,X_s^i,\mu^N_s)- b(s,X_s^i,\mu_s)\big)ds \nonumber \\
& \leq \tau| \mathrm{Tr}(c^{-1})|_\infty \sum_{i = 1}^N \int_{t}^{t+\delta} |\xi_s^N(X_s^i)|^2ds, \nonumber \\
& \leq \kappa
 \left\{
\begin{array}{ll}
N\int_t^{t+\delta} \mathcal W_1(\mu_s^N,\mu_s)^2ds & \mathrm{under}\;\ref{ass: basic lip} \mathrm{(i)},\\ \\
\int_t^{t+\delta} \sum_{i = 1}^N\sum_{\ell=1}^{k-1} \big|\int_{(\R^d)^{\ell}}  \delta_\mu^\ell b(s,X_s^i,y^{\ell}, \mu_s)(\mu_s^N-\mu_s)^{\otimes \ell}(dy^{\ell})\big|^2ds &  \\
+N\int_t^{t+\delta}\mathcal W_1(\mu_s^N, \mu_s)^2 \wedge  \mathcal W_1(\mu_s^N, \mu_s)^{2k} ds& \mathrm{under}\;\ref{ass: basic lip} \mathrm{(ii)},\\ \\
\int_t^{t+\delta} \sum_{i = 1}^N\big|\int_{\R^d} \widetilde b(s,X_s^N,y)(\mu_s^N-\mu_s)(dy)\big|^2 ds,&  \mathrm{under}\;\ref{ass: basic lip} \mathrm{(iii)}. \label{eq: the big maj}
\end{array}
\right.
\end{align}
with  $\kappa = \tau | \mathrm{Tr}(c^{-1})|_\infty C$, where $C$ is the constant of Lemma \ref{lem: dec diff drifts}. We now heavily rely on the sharp deviation estimate
\begin{equation} \label{eq: fournier guillin}
\sup_{0 \leq s \leq T}\overline{\mathbb P}^{N}(\mathcal W_1\big(\mu^N_s,\mu_s \big) \geq x) \lesssim \varepsilon_N(x),
\end{equation}
with
\begin{align}
 \varepsilon_N(x) =\left\{
\begin{array}{lll}
\exp(-\mathfrak CNx^2) & \mathrm{if} & d=1,\nonumber \\ 
\exp(-\mathfrak CN\tfrac{x^2}{(\log (2+1/x))^2}\big){\bf 1}_{\{x \leq 1\}}+\exp(-\mathfrak CNx^2){\bf 1}_{\{x >1\}} & \mathrm{if} & d=2, \\  \label{eq: FourGhill}
\exp(-\mathfrak CNx^d){\bf 1}_{\{x \leq 1\}}+\exp(-\mathfrak CNx^2){\bf 1}_{\{x >1\}} & \mathrm{if} & d\geq 3, \nonumber
 \end{array}
\right.
\\
\end{align}
extracted from Theorem 2 of Fournier and Guillin \cite{FournierGuillin}. Here $\mathfrak C$ depends on $C_2$ and $d$ only, thanks to \eqref{eq: cond FG} that guarantees that Condition (1) of Theorem 2 in \cite{FournierGuillin} is satisfied, hence the uniformity in $s \in [0,T]$.\\

We complete the proof of \eqref{eq: prop fund} under Assumption \ref{ass: basic lip} (i) that implies in particular $d=1$.
From \eqref{eq: the big maj}, we infer
\begin{align}
& \E_{\overline{\PP}^N}\big[\exp\big(\tau\big(\langle \overline{M}_\cdot^N\rangle_{t+\delta}-\langle \overline{M}_\cdot^N\rangle_{t}\big)\big)\big]  \nonumber \\
& \leq \delta^{-1}\int_t^{t+\delta}\E_{\overline{\PP}^N}\big[\exp\big(\kappa \delta N\mathcal W_1(\mu_s^N,\mu_s)^2\big)\big]ds \nonumber \\
& \leq  \sup_{s \in [0,T]}\E_{\overline{\PP}^{N}}\Big[\exp\big(\kappa \delta N\mathcal W_1(\mu^N_s,\mu_s)^2  \big)\Big] \nonumber\\
& \leq 1+\kappa \delta \sup_{s \in [0,T]}\int_0^\infty\exp(\kappa \delta  z)\overline{\mathbb P}^{N}\big(\mathcal W_1(\mu^N_s,\mu_s) \geq N^{-1/2}z^{1/2}\big)dz \nonumber\\
& \lesssim \int_0^\infty \exp\big((\kappa \delta- \mathfrak C)z\big)dz, \label{eq: integral dimen 1}
\end{align}
where 
$\mathfrak C$ is the constant in \eqref{eq: FourGhill}. The integral in \eqref{eq: integral dimen 1} is finite as soon as 
$\delta \leq \tau^{-1}|\mathrm{Tr}(c^{-1})|_\infty^{-1}|b|_{\mathrm{Lip}}^{-2}\mathfrak C$ and \eqref{eq: prop fund} follows.\\

We next complete the proof of \eqref{eq: prop fund} under Assumption \ref{ass: basic lip} (ii).  When $d=1$, we can rely on the previous case. Assume now that ($d=2$ and $k \geq 2$) or ($d \geq 3$ and $k \geq d/2$).
By Jensen's inequality
$$
\E_{\overline{\PP}^N}\big[\exp\big(\tau\big(\langle \overline{M}_\cdot^N\rangle_{t+\delta}-\langle \overline{M}_\cdot^N\rangle_{t}\big)\big)\big] \leq I+II,
$$
with
\begin{align*}
I & = \frac{1}{\delta k N}\int_t^{t+\delta}\sum_{i = 1}^N\sum_{\ell = 1}^{k-1}\E_{\overline{\PP}^N}\big[\exp\big(\kappa\delta kN\big|\int_{(\R^d)^{\ell}}  \delta_\mu^\ell b(s,X_s^i,y^{\ell}, \mu_s)(\mu_s^N-\mu_s)^{\otimes \ell}(dy^{\ell})\big|^2\big)\big]ds, \\
II & =  \frac{1}{\delta k}\int_t^{t+\delta}\E_{\overline{\PP}^N}\big[\exp\big(\kappa\delta kN\mathcal W_1(\mu_s^N, \mu_s)^2 \wedge  \mathcal W_1(\mu_s^N, \mu_s)^{2k}\big)\big]ds.
\end{align*}
We first estimate the remainder term $II$: by inequality \eqref{eq: fournier guillin}, we have
\begin{align*}
II  \leq \frac{1}{k}\Big(1+& \sup_{t \in [0,T]}\kappa \delta k\int_0^{\infty}  \mathrm{e}^{\kappa \delta kz} \overline{\PP}^{N}\big(N\mathcal W_1(\mu_t^N, \mu_t)^2 \wedge  \mathcal W_1(\mu_t^N, \mu_t)^{2k}\geq z\big)dz\Big) \\
& \lesssim 1+\sup_{t \in [0,T]}\int_0^N \mathrm{e}^{\kappa \delta kz} \overline{\PP}^{N}\big(\mathcal W_1(\mu^N_t,\mu_t)\geq  N^{-1/(2k)}z^{1/(2k)}\big)dz \\
&+ \sup_{t \in [0,T]}\int_N^\infty \mathrm{e}^{\kappa \delta kz} \overline{\mathbb P}^{N}\big(\mathcal W_1(\mu^N_t,\mu_t)\geq N^{-1/2}z^{1/2}\big)dz. 
\end{align*}

We first estimate the integral over $[0,N]$:
\begin{align*}
 \int_0^N \mathrm{e}^{\kappa \delta kz} 
\overline{\mathbb P}^{N}\big(\mathcal W_1(\mu^N_t,\mu_t)\geq   N^{-1/(2k)}z^{1/(2k)}\big)dz 
& \lesssim \int_0^N \exp\big(\kappa \delta kz-\mathfrak CN^{1-d/(2k)}z^{d/(2k)}\big)dz \\
& = N  \int_0^1 \exp\big(N(\kappa \delta kz-\mathfrak Cz^{d/(2k)})\big)dz \lesssim 1
\end{align*}
for $2 < d \leq 2k$ as soon as $\delta \leq k^{-1}\kappa^{-1}\mathfrak C$. The case ($d=2$ and $k \geq 2$) is slightly more technical but elementary and we omit it.
For the integral over $[N,\infty)$, we proceed as under Assumption \ref{ass: basic lip} (i) to obtain
$$\int_N^\infty \mathrm{e}^{\kappa \delta kz} \overline{\mathbb P}^{N}\big(\mathcal W_1(\mu^N_t,\mu_t)\geq N^{-1/2}z^{1/2}\big)dz \lesssim \int_0^\infty \exp\big((\kappa \delta k -\mathfrak C)z\big)dz \lesssim 1$$
for $\delta \leq k^{-1}\kappa^{-1}\mathfrak C$ and we conclude $II \lesssim 1$ in that case.\\

We next turn to the term $I$. Observe first that by exchangeability
\begin{align*}
I & \leq \sup_{1 \leq \ell \leq k-1, t \in [0,T]} \E_{\overline{\PP}^N}\big[\exp\big(\kappa\delta kN\big|\int_{(\R^d)^{\ell}}  \delta_\mu^\ell b(t,X_t^N,y^{\ell}, \mu_t)(\mu_t^N-\mu_t)^{\otimes \ell}(dy^{\ell})\big|^2\big)\big] \\
 & = 1+\sum_{p \geq 1}\frac{(\kappa\delta kN)^p}{p!}\sup_{1 \leq \ell \leq k-1, t \in [0,T]} \E_{\overline{\PP}^N}\Big[\big|\int_{(\R^d)^{\ell}}  \delta_\mu^\ell b(t,X_t^N,y^{\ell}, \mu_t)(\mu_t^N-\mu_t)^{\otimes \ell}(dy^{\ell})\big|^{2p}\Big].
\end{align*}
We then use the following estimate, reminiscent of moment bounds for $U$-statistics, however in a weaker and simpler form in our context. For an integer $\ell \geq 1$, we call $\mathcal G_\ell$ the class of functions $f :[0,T]\times \R^d \times (\R^d)^\ell \rightarrow \R^d$ 
that are Lipschitz continuous in the space variables.

\begin{lem} \label{lem rosenthal}
Let $p \geq 1$. For $1 \leq \ell \leq k$, $f \in \mathcal G_\ell$ and $N \geq k+1$, we have
\begin{equation} \label{eq: rosen}
\mathcal V_{2p,\ell}^N\big(f(t,\cdot)\big)  = \E_{\overline{\PP}^N}\Big[\big|\int_{(\R^d)^{\ell}}  f(t,X_t^N, y^{\ell})(\mu_t^N-\mu_t)^{\otimes \ell}(dy^{\ell})\big|^{2p}\Big]\leq \frac{p! K_\ell^p}{(N-k)^p}|f(t,\cdot)|_{\mathrm{Lip}}^{2p},
\end{equation}
for some explicitly computable $K_\ell = K_\ell(\mathfrak b) >0$.
\end{lem}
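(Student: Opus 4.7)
The plan is to recognise the integral $\int f(t,X_t^N,y^\ell)\,d(\mu_t^N-\mu_t)^{\otimes\ell}(y^\ell)$ as a completely degenerate $U$-statistic of order $\ell$ (conditionally on $X_t^N$), and to obtain its $L^{2p}$ moment via a Hoeffding-type decomposition together with the sub-Gaussian control of Lemma \ref{lem moment class}. A first step is the algebraic identity
\[
\int f(t,X_t^N, y^\ell)\,d(\mu_t^N-\mu_t)^{\otimes \ell}(y^\ell) \;=\; \frac{1}{N^\ell}\sum_{i_1,\ldots,i_\ell=1}^N F(X_t^{i_1},\ldots,X_t^{i_\ell};X_t^N),
\]
where $F = \prod_{j=1}^\ell(I-P_j) f(t,X_t^N,\cdot)$ and $P_j g$ denotes integration of the $j$-th argument against $\mu_t$. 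The signed expansion of $\prod_j(I-P_j)$ mirrors exactly the signed expansion of $(\mu_t^N-\mu_t)^{\otimes \ell}$, and $P_j(I-P_j)=0$ makes $F$ \emph{completely degenerate}: $\int F(\ldots,y_j,\ldots)\mu_t(dy_j)=0$ for every $j$.

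Next, I would control $F$ pointwise and in $L^{2p}$. The elementary inequality $|(I-P_j)g(y)|\leq |g|_{\mathrm{Lip}}\int|y_j-z|\mu_t(dz)$, applied iteratively while localising the smallness in a single distinguished coordinate, yields
\[
|F(y_1,\ldots,y_\ell;X_t^N)| \;\leq\; 2^{\ell-1}\,|f(t,\cdot)|_{\mathrm{Lip}}\,\min_{j}\bigl(|y_j|+m_1\bigr),
\]
uniformly in the remaining variables, with $m_1 = \int|x|\mu_t(dx)$. Combined with Lemma \ref{lem moment class}, which gives $\E_{\overline{\PP}^N}[|X_t^i|^{2p}]\leq p!\,C_2^p$, this bounds $\|F\|_{L^{2p}(\mu_t^{\otimes\ell})}$ by $C_\ell\,|f(t,\cdot)|_{\mathrm{Lip}}(p!)^{1/(2p)}$ for a constant $C_\ell$ depending only on $\ell$ and $\mathfrak b$.

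The moment bound then follows from expanding
\[
\mathcal V_{2p,\ell}^N \;=\; N^{-2p\ell}\sum_{\vec i^{(1)},\ldots,\vec i^{(2p)}}\E_{\overline{\PP}^N}\Bigl[\prod_{k=1}^{2p} F\bigl(X_t^{\vec i^{(k)}};X_t^N\bigr)\Bigr].
\]
The complete degeneracy of $F$ together with the i.i.d.\ structure under $\overline{\PP}^N$ forces the expectation to vanish whenever some particle index appears in only one of the $2p\ell$ slots, since conditioning on the other particles collapses that slot to $P_j F=0$. Only \emph{matched} configurations — those in which every index used appears at least twice — can contribute. A combinatorial count bounds their number by $(p\ell)!\,C^p\,(N-k)^{p\ell}$, the factor $N-k$ absorbing collisions with the distinguished particle $X_t^N$. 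Hölder's inequality using the $L^{2p}$ bound from the previous step, combined with $(N-k)^{p\ell}/N^{2p\ell}\lesssim (N-k)^{-p}$, yields the advertised $p!\,K_\ell^p(N-k)^{-p}|f|_{\mathrm{Lip}}^{2p}$.

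The hard part will be the Hölder/combinatorial book-keeping in the last step: one must pair the $2p$ copies of $F$ in a Wick-type fashion so that exactly one distinguished variable per factor carries the Lipschitz bound (giving $|f|_{\mathrm{Lip}}^{2p}$ rather than $|f|_{\mathrm{Lip}}^{2p\ell}$) and the sub-Gaussian contributions of the remaining variables collapse to a single $p!$ factor (rather than $(p\ell)!$). The fact that the target rate $N^{-p}$ is considerably weaker than the optimal degenerate $U$-statistic rate $N^{-p\ell}$ is what allows such a crude pairing to succeed uniformly in $\ell$.
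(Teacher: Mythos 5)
Your reduction is sound as far as it goes: the identity expressing the integral through the completely degenerate kernel $F=\prod_{j}(I-P_j)f(t,X_t^N,\cdot)$, the vanishing of every configuration in which some index $\neq N$ occurs in a single slot, and the pointwise bound $|F|\leq 2^{\ell-1}|f(t,\cdot)|_{\mathrm{Lip}}\min_j(|y_j|+m_1)$ are all correct, and the last of these already disposes of the $|f|_{\mathrm{Lip}}^{2p}$-versus-$|f|_{\mathrm{Lip}}^{2p\ell}$ worry you raise. The genuine gap is the $p$-dependence of the constants in your final step, and your diagnosis of why the "crude pairing" should succeed is wrong. Generalized H\"older costs $\prod_{k=1}^{2p}\E_{\overline{\PP}^N}[|F_k|^{2p}]^{1/(2p)}\leq p!\,C^p|f(t,\cdot)|_{\mathrm{Lip}}^{2p}$ (one factor $p!$, via Lemma \ref{lem moment class}), while the number of matched configurations is itself at least of order $p!\,C^pN^{p\ell}$ (already the perfect matchings of the $2p\ell$ slots number $(2p\ell-1)!!\geq (p\ell)!$), so the product is of order $(p!)^2C^pN^{-p\ell}$ at best, not $p!\,K_\ell^p(N-k)^{-p}$. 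The slack $N^{p(\ell-1)}$ between the rates cannot absorb the extra factorial, because the estimate must hold for every $p\geq 1$ at fixed $N$: in the proof of Proposition \ref{prop: controle change of proba} the lemma is summed against $(\kappa\delta kN)^p/p!$, and any super-exponential growth in $p$ beyond a single $p!$ makes that exponential series diverge. The same objection applies even to a perfectly sharp degenerate $U$-statistic computation: for $\ell\geq 2$ the optimal bound at rate $N^{-p\ell}$ carries $(p!)^{\ell}$-type growth, and $(p!)^{\ell-1}\leq K^pN^{p(\ell-1)}$ fails for $p$ large compared with $N$. So the difficulty you defer to the "book-keeping" is not book-keeping; it is the whole point, and the heuristic offered does not resolve it.

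The way to get a single $p!$ is to keep sub-Gaussianity at a level where it composes, instead of expanding and applying H\"older. This is what the paper does: it proves a slightly stronger statement for kernels with $k-\ell+1$ frozen particle arguments (the classes $\mathcal P_{k-\ell+1,\ell}$) and inducts on $\ell$, peeling off one factor of $\mu_t^N-\mu_t$ at a time by splitting it into $\mu_t^N$ (freeze the new variable at a particle) and $\mu_t$ (integrate it out); only the base case $\ell=1$ is estimated sharply, as a normalized sum of $N-k$ i.i.d.\ centred variables, independent of the frozen particles, each sub-Gaussian by the Lipschitz bound and Lemma \ref{lem moment class}. The additivity of sub-Gaussian parameters and the moment characterization of Definition \ref{def: sub gaussian} then give $p!\,(K_1/(N-k))^p$ directly, with exactly one $p!$, and the induction only multiplies $K_\ell$ by fixed factors. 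If you insist on the expansion route, you would have to replace crude H\"older by block-size-dependent moment bounds (a block of size $m_i$ contributing $(m_i/2)!$-type factors) and check that the sum over partitions stays of order $p!\,C^p$ while simultaneously sacrificing the $N^{-p\ell}$ rate down to $N^{-p}$ along the lines just described; none of this is supplied in your sketch.
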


The proof of Lemma \ref{lem rosenthal} is quite elementary, yet technical, and is delayed until Appendix \ref{sec: proof of lemma rosen}.
The remainder of the proof of \eqref{eq: prop fund} is then straightforward: by Lemma \ref{lem rosenthal} with $f(t,x,y^\ell) =  \delta_\mu^\ell b(t,x,y^{\ell}, \mu_t),$ it follows that
$$I \leq 1+\sum_{p \geq 1}(\kappa\delta kN)^p\frac{K_\ell^p}{(N-k)^p}\sup_{t \in [0,T]}|\delta_{\mu}^{\ell}b(t,\cdot, \mu_t)|_{\mathrm{Lip}}^{2p} \lesssim 1$$
as soon as $\delta < \kappa^{-1}k^{-1}(k+1)^{-1}K_{\ell}^{-1}\sup_{t \in [0,T]}|\delta_{\mu}^{\ell}b(t,\cdot, \mu_t)|_{\mathrm{Lip}}^{-2}$.
Since $II \lesssim 1$ is established as well, we obtain Proposition \ref{prop: controle change of proba} under Assumption \ref{ass: basic lip} (ii) provided Lemma \ref{lem rosenthal} is proved.\\

We finally prove \eqref{eq: prop fund} under Assumption  \ref{ass: basic lip} (iii) when $d \geq 1$ is arbitrary.
By \eqref{eq: the big maj} and Jensen's inequality together with the exchangeability  of $(X_t^i)_{1 \leq i \leq N}$, we have
\begin{align*}
&\E_{\overline{\PP}^N}\big[\exp\big(\tau\big(\langle \overline{M}_\cdot^N\rangle_{t+\delta}-\langle \overline{M}_\cdot^N\rangle_{t}\big)\big)\big] \\
& \leq \frac{1}{\delta}\int_t^{t+\delta}\E_{\overline{\PP}^N}\big[\exp\big(\kappa\delta \sum_{i = 1}^N\big|\int_{(\R^d)^{\ell}} \widetilde b(s,X_s^i,y)(\mu_s^N-\mu_s)(dy)\big|^2\big)\big]ds \\
&  \leq \sup_{s \in [0,T]}\E_{\overline{\PP}^N}\big[\exp\big(\kappa\delta N\big|\int_{\R^d} \widetilde b(s,X_s^N,y)(\mu_s^N-\mu_s)(dy)\big|^2\big)\big] \\
& \leq 1+\sum_{p \geq 1}\frac{(\kappa\delta N)^p}{p!}\sup_{s \in [0,T]} \E_{\overline{\PP}^N}\Big[\big|\int_{\R^d} \widetilde b(s,X_s^N,y)(\mu_s^N-\mu_s)(dy)\big|^{2p}\Big] \lesssim 1
\end{align*}
as soon as $\delta < \tfrac{1}{2}\kappa^{-1}K_{1}^{-1}\sup_{s \in [0,T]}|\widetilde b(s,\cdot)|_{\mathrm{Lip}}^{-2}$ by Lemma \ref{lem rosenthal}. Therefore \eqref{eq: prop fund} is established under Assumption \ref{ass: basic lip} (iii) and this completes the proof of Proposition \ref{prop: controle change of proba}.

\section{Proof of the nonparametric estimation results} \label{sec: proof nonpara}

We will repeatedly use estimates of the form
\begin{equation} \label{eq: dev exp classique}
\int_{\nu}^\infty \exp(-z^r) dz\leq 2r^{-1}\nu^{1-r}\exp(-\nu^r),\;\;\nu,r >0,\;\;\nu \geq (2/r)^{1/r}.
\end{equation} 
and
\begin{equation} \label{eq: eq exp int}
\int_0^\infty \exp\Big(-\frac{az^p}{b+cz^{p/2}}\Big)dz \leq C_p \max\Big(\Big(\frac{a}{b}\Big)^{-1/p},\Big(\frac{a}{c}\Big)^{-2/p}\Big),\;\;a,b,c,p>0,
\end{equation}
with $C_p = 2\int_0^\infty \exp(-\tfrac{1}{2}(\min(\sqrt{z},z))^p)dz$, stemming from the rough bound
$$\exp\Big(-\frac{az^p}{b+cz^{p/2}}\Big)\leq \exp\Big(-\frac{az^p}{2b}\Big)+\exp\Big(-\frac{az^{p/2}}{2c}\Big),\;\;z>0.$$
The estimate \eqref{eq: eq exp int} is far from being optimal, but will be sufficient for our purpose.
\subsection{Proof of Theorem \ref{thm: GL mu}} 
\subsubsection*{Preliminaries} 
We first state local upper and lower estimates on $(t,x) \mapsto \mu_t(x)$. 
\begin{lem} \label{lem: loc unif mu above}
Work under Assumptions \ref{ass: init condition}, \ref{ass: minimal prop sigma} and \ref{ass: basic lip}. Let $(t_0, x_0) \in (0,T] \times \R^d$. Let $r>0$ and $[r_1,r_2] \subset (0,T)$.
\begin{itemize}
\item[(i)] There exists $\kappa_5$ depending on $(t_0,x_0), r$ and $\mathfrak b$ such that
\begin{equation} \label{eq: bound loc unif b}
\sup_{t \in [0,T], |x-x_0| \leq r}|b(t,x,\mu_t)| \leq \kappa_5.
\end{equation}
\item[(ii)] There exist $\kappa_3,\kappa_4$ depending on $x_0, r_1, r_2,r$ and $\mathfrak b$  such that
\begin{equation} \label{eq: upperlower gaussian tail}
0 < \kappa_4 \leq \inf_{t \in [r_1,r_2], |x-x_0| \leq r} \mu_{t}(x) \leq \sup_{t \in [r_1,r_2], |x-x_0| \leq r} \mu_{t}(x) \leq \kappa_3.
\end{equation}
\end{itemize}
\end{lem}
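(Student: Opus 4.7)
\medskip
\noindent \textbf{Plan.} The two assertions are essentially classical estimates for the \emph{linear} Fokker-Planck equation \eqref{eq: mckv approx} viewed with $\mu_t$ frozen in the drift.

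For part (i), under any of the three variants of Assumption \ref{ass: basic lip} one has $|b(t,x,\mu) - b(t,0,\delta_0)| \leq C(|x| + \mathcal{W}_1(\mu,\delta_0))$, and combined with $|b(t,0,\delta_0)| \leq b_0$ it suffices to control $\sup_{t \in [0,T]} \int |y|\,\mu_t(dy)$ by a constant depending on $\mathfrak{b}$. I would apply It\^o's formula to $|X_t|^2$ along the nonlinear McKean process of law $\mu_t$, use the Lipschitz structure of $b$ in $(x,\mu)$ and of $\sigma$ in $x$ together with Assumption \ref{ass: init condition}, and close by Grönwall, exactly as in the classical argument of Lemma \ref{lem moment class} transferred to the limit dynamics; this yields $\sup_{t \in [0,T]} \int |y|^2 \mu_t(dy) \leq C_2'(\mathfrak{b})$, and Cauchy--Schwarz gives (i) with, say, $\kappa_5 = b_0 + C(r + |x_0| + \sqrt{C_2'})$.

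For part (ii), I use (i) and the Lipschitz continuity of $b$ in $x$ to conclude that $\beta(t,x) := b(t,x,\mu_t)$ is measurable, Lipschitz in $x$ and of at most linear growth, while $c = \sigma\sigma^\top$ is Lipschitz and uniformly elliptic by Assumption \ref{ass: minimal prop sigma}. Thus $\mu_t$ is the law of a time-inhomogeneous It\^o diffusion with these coefficients, and the classical two-sided Aronson/Friedman heat kernel estimates, in the convenient form of Bogatchev et al. \cite{BoKry} already invoked for Proposition \ref{prop : reg Holder}, yield a transition density $p(0,y;t,x)$ satisfying, for $t \in [r_1,r_2]$,
\begin{equation*}
\tfrac{c_-}{t^{d/2}} \exp\!\bigl(-C_- \tfrac{|x-y|^2}{t}\bigr) \leq p(0,y;t,x) \leq \tfrac{c_+}{t^{d/2}} \exp\!\bigl(-C_+ \tfrac{|x-y|^2}{t}\bigr),
\end{equation*}
with constants depending on $\mathfrak{b}$, $r_1$, $r_2$, $\kappa_5$. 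Since $\mu_t(x) = \int_{\R^d} p(0,y;t,x)\,\mu_0(dy)$, the upper bound $\kappa_3 = c_+ r_1^{-d/2}$ is immediate from the Gaussian majorant and $\mu_0(\R^d)=1$. For the lower bound, the subgaussian tails of $\mu_0$ (Assumption \ref{ass: init condition}) combined with Markov's inequality produce $R_0 = R_0(\gamma_0,\gamma_1,|x_0|)$ with $\mu_0(B(x_0,R_0)) \geq 1/2$, and restricting the integral to $y \in B(x_0,R_0)$ yields, uniformly on $[r_1,r_2] \times \overline{B(x_0,r)}$,
\begin{equation*}
\mu_t(x) \geq \tfrac{c_-}{2\, r_2^{d/2}} \exp\!\bigl(-C_- \tfrac{(r+R_0)^2}{r_1}\bigr) =: \kappa_4 > 0.
\end{equation*}

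The main delicate point is justifying the two-sided Aronson-type heat kernel estimates under the precise regularity at hand: $\beta$ has only linear growth, not boundedness. This is handled either by quoting directly the Fokker-Planck density estimates of \cite{BoKry}, which accommodate Lipschitz drifts of linear growth under uniform ellipticity, or by a truncation/localisation argument in which $\beta$ is cut off outside a large ball and the error absorbed using the sub-Gaussian concentration of the diffusion furnished by (i) and Lemma \ref{lem moment class}. The uniform ellipticity and Lipschitz continuity of $c$ are the two crucial ingredients that make everything go through.
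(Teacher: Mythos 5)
Your part (i) is exactly the paper's argument: the Lipschitz bound $|b(t,x,\mu_t)|\le b_0+|b|_{\mathrm{Lip}}\big(|x|+\mathcal W_1(\mu_t,\delta_0)\big)$ combined with a uniform bound on $\sup_{t\le T}\int_{\R^d}|y|\,\mu_t(dy)$; the paper obtains the latter by noting that under $\overline{\PP}^N$ the particles have law $\mu_t$ and invoking Lemma \ref{lem moment class}, rather than re-running the It\^o/Gr\"onwall computation for the limit McKean process as you propose, but this is the same estimate. For part (ii) your primary route is genuinely different: you pass through two-sided Aronson-type bounds on the transition kernel of the linearised diffusion with drift $\beta(t,x)=b(t,x,\mu_t)$ and the representation $\mu_t(x)=\int_{\R^d} p(0,y;t,x)\,\mu_0(dy)$, whereas the paper simply quotes Corollary 8.2.2 of \cite{BoKry}, which bounds the solution density directly by $\exp\big(-\mathfrak c_-(1+|x|^2)\big)\le\mu_t(x)\le\exp\big(\mathfrak c_+(1+|x|^2)\big)$ on $[r_1,r_2]\times\R^d$, and then restricts to the compact set $\{|x-x_0|\le r\}$ to define $\kappa_3,\kappa_4$. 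You correctly identify the delicate point: the drift has only linear growth and $\sigma$ is merely measurable in time, so classical Aronson kernel estimates for bounded coefficients do not apply off the shelf; your first remedy (quote the Fokker--Planck density estimates of \cite{BoKry} directly) is precisely the paper's proof, while the truncation alternative would need real work for the upper bound. Note also that the uniform-in-$x$ Gaussian majorant $\kappa_3=c_+r_1^{-d/2}$ your kernel route would produce is stronger than what the assumptions deliver: the paper remarks that global boundedness $\sup_x\mu_t(x)<\infty$ requires additional $1/2$-H\"older time regularity of the diffusion coefficient (Theorem 7.3.3 and Example 8.3.10 of \cite{BoKry}); since the lemma is local, the weaker exponential-in-$|x|^2$ two-sided bound suffices, which is what makes the paper's citation the more economical and safer choice. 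Your lower-bound device via $\mu_0(B(x_0,R_0))\ge 1/2$ is correct but becomes unnecessary once one has the direct lower density bound.
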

In turn, for a compactly supported kernel $K$, this implies the existence of $r = r(K)$ such that the estimate
\begin{equation} \label{eq: borne var mu}
|K_h(x_0-\cdot)|_{L^2(\mu_{t_0})}^2 = \int_{\R^d} h^{-2d}K\big(h^{-1}x\big)^2\mu_{t_0}(x_0-x)dx \leq \kappa_3(r) h^{-d}|K|_2^2 \\
\end{equation}
holds true.
\begin{proof}
For $x$ such that $|x-x_0| \leq r$, we have
\begin{align}
|b(t,x,\mu_t)| & \leq |b(t,0,\delta_0)|+|b|_{\mathrm{Lip}}\big(|x|+\mathcal W_1(\mu_t,\delta_0)\big) \nonumber \\
& \leq \sup_{t \in [0,T]}|b(t,0,\delta_0)|+|b|_{\mathrm{Lip}}\big(|x_0|+r+\sup_{t \in [0,T]}\int_{\R^d}|y|\mu_t(dy)\big)  \label{eq: lin growth}
\end{align}
that defines $\kappa_5$ thanks to Assumption \ref{ass: basic lip} and Lemma \ref{lem moment class}. This establishes \eqref{eq: bound loc unif b}. The estimate \eqref{eq: upperlower gaussian tail} follows from classical 
Gaussian tail estimates for the solution of parabolic equations. We refer for example in our context to Corollary 8.2.2 of \cite{BoKry}: for every compact interval $[r_1,r_2]\subset (0,T)$, there exist constants $\mathfrak c_\pm >0$ depending on $r_1,r_2$ and $\mathfrak b$ only such that
$$\exp\big(-\mathfrak c_-(1+|x|^2)\big)\leq \mu_t(x) \leq \exp\big(\mathfrak c_+\big(1+|x|^2)\big)$$
for every $(t,x) \in [r_1,r_2]\times \R^d$. This establishes \eqref{eq: upperlower gaussian tail}. 
Actually, if we moreover have $1/2$-H\"older smoothness in time for the diffusion coefficient, investigating further Theorem 7.3.3 and Example 8.3.10 of \cite{BoKry}, 
it is possible to prove $\sup_{x \in \R^d}\mu_t(x) <\infty$ uniformly in $t \in [r_1,r_2]$, hence $\kappa_3$ can be taken independently of $r$.

\end{proof}
We next prove a standard bias-variance estimate for the quadratic risk of  $\widehat \mu_h^N(t_0,x_0)$.
\begin{lem} \label{lem: bias var mu}
 In the setting of Theorem \ref{thm: GL mu}, if $K$ is a bounded and compactly supported kernel and $h \in \mathcal H_1^N$, we have
$$
  \E_{\PP^N}\big[\big(\widehat \mu_{h}^N(t_0,x_0)-\mu_{t_0}(x_0)\big)^2\big] 
 \lesssim 
 \mathcal B_{h}^N(\mu)\big(t_0,x_0)^2+\mathsf V_h^N,
$$
up to a constant that depends (continuously) on $(t_0,x_0)$, $|K|_\infty$ and $\mathfrak b$, and where $ \mathcal B_{h}^N(\mu)\big(t_0,x_0)$ is defined in \eqref{eq: bias mu} and $\mathsf V_h^N$ in \eqref{eq: var correc mu}.
\end{lem}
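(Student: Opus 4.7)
My plan is to do the standard bias--variance decomposition and control the stochastic part by invoking the Bernstein inequality of Theorem~\ref{thm: concentration} with the time-independent test function $\phi(t,y)=K_h(x_0-y)$ and time weight $\rho(dt)=\delta_{t_0}(dt)$. Write
\begin{equation*}
\widehat\mu_h^N(t_0,x_0)-\mu_{t_0}(x_0)=S_h^N+\mathcal{R}_h,
\end{equation*}
where
\begin{equation*}
S_h^N=\int_{\R^d}K_h(x_0-y)\bigl(\mu_{t_0}^N(dy)-\mu_{t_0}(y)dy\bigr),\qquad \mathcal{R}_h=\int_{\R^d}K_h(x_0-y)\mu_{t_0}(y)dy-\mu_{t_0}(x_0).
\end{equation*}
The deterministic term is bounded by $|\mathcal{R}_h|\leq \mathcal{B}_h^N(\mu)(t_0,x_0)$, simply by taking $h'=h$ in the supremum defining $\mathcal{B}_h^N$ in \eqref{eq: bias mu}.

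For the stochastic term, Lemma~\ref{lem: loc unif mu above} via \eqref{eq: borne var mu} yields the two bounds $|\phi|_\infty\leq h^{-d}|K|_\infty$ and $|\phi|_{L^2(\nu)}^2\leq \kappa_3\, h^{-d}|K|_2^2$ (the kernel $K$ being compactly supported, we fix $r=r(K)$ in \eqref{eq: upperlower gaussian tail}). Applying Theorem~\ref{thm: concentration} to $\pm\phi$ yields the two-sided bound
\begin{equation*}
\PP^N\bigl(|S_h^N|\geq z\bigr)\leq 2\kappa_1\exp\Bigl(-\frac{\kappa_2 Nz^2}{\kappa_3 h^{-d}|K|_2^2+h^{-d}|K|_\infty z}\Bigr),\qquad z\geq 0.
\end{equation*}
Using $\E_{\PP^N}[(S_h^N)^2]=\int_0^\infty \PP^N(|S_h^N|^2\geq y)dy$ and the elementary estimate \eqref{eq: eq exp int} with $p=1$, $a=\kappa_2 N$, $b\lesssim h^{-d}$, $c\lesssim h^{-d}$ gives
\begin{equation*}
\E_{\PP^N}\bigl[(S_h^N)^2\bigr]\lesssim \max\!\Bigl(\tfrac{h^{-d}}{N},\tfrac{h^{-2d}}{N^2}\Bigr).
\end{equation*}

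The admissibility condition $h\in\mathcal{H}_1^N$ forces $h^{-d}\leq N(\log N)^{-2}$, so the first term dominates and $\E_{\PP^N}[(S_h^N)^2]\lesssim h^{-d}/N\lesssim \mathsf{V}_h^N$ (the variance proxy even carries an extra $\log N$ factor, absorbed in $\lesssim$). Combining both estimates via $(a+b)^2\leq 2a^2+2b^2$ yields the claim. I do not anticipate any serious obstacle: the main point is simply to track that the constants emerging from Theorem~\ref{thm: concentration}, from \eqref{eq: borne var mu} and from the tail integration depend only on $(t_0,x_0)$, $|K|_\infty$ and $\mathfrak{b}$, which is transparent given that $\kappa_1,\kappa_2$ are those of Theorem~\ref{thm: concentration} and $\kappa_3$ depends on $x_0$, $r(K)$ and $\mathfrak{b}$ through Lemma~\ref{lem: loc unif mu above}.
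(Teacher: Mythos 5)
Your proof is correct and follows essentially the same route as the paper: the same bias/stochastic decomposition, the variance term controlled by Theorem \ref{thm: concentration} applied with $\rho(dt)=\delta_{t_0}(dt)$ and the kernel bounds from \eqref{eq: borne var mu}, the tail integrated via \eqref{eq: eq exp int}, and the quadratic remainder $(Nh^d)^{-2}$ absorbed using $\max_{h\in\mathcal H_1^N}(Nh^d)^{-1}\lesssim 1$. Nothing to add.
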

\begin{proof}
Write $\widehat \mu_{h}^N(t_0,x_0)-\mu_{t_0}(x_0) = I+II$, with
$$
I = \int_{\R^d} K_h(x_0-x)\mu_{t_0}(x)dx-\mu_{t_0}(x_0)
$$
and
$$
II = \int_{\R^d} K_h(x_0-x)\big(\mu_{t_0}^N(dx)-\mu_{t_0}(x)dx\big).
$$
We have $I^2 \leq \mathcal B_h^N(\mu)(t_0,x_0)^2$ for the squared bias term. For the variance term, using successively Theorem \ref{thm: concentration} and the estimate \eqref{eq: borne var mu} 
we have
\begin{align*}
 \E_{\PP^N}\big[II^2\big] & = \int_0^\infty \PP^N(\big| II \big| \geq z^{1/2})dz \\
& \leq 2\kappa_1 \int_0^\infty  \exp\Big(-\frac{\kappa_2 Nz}{|K_h(x_0-\cdot)|_{L^2(\mu_{t_0})}^2+|K_h(x_0-\cdot)|_\infty z^{1/2}}\Big)dz \\
& \leq 2\kappa_1 \int_0^\infty  \exp\Big(-\frac{\kappa_2 Nh^dz}{\kappa_3 |K|_2^2+|K|_\infty  z^{1/2}}\Big)dz \\
& \lesssim (Nh^d)^{-1}(1+(Nh^d)^{-1}) \\
& \lesssim \mathsf V_h^N
\end{align*}
where we used \eqref{eq: eq exp int} and the fact that $\max_{h \in \mathcal H_1^N} (Nh^d)^{-1} \lesssim 1$.
\end{proof}

\subsubsection*{Completion of proof of Theorem \ref{thm: GL mu}} We essentially repeat the main argument of the Goldenshluger-Lepski method (see {\it e.g.} \cite{GL08, GL11, GL14} for the pointwise risk). We nevertheless give a proof for sake of completeness. Recall that $\widehat h^N$ denotes the data-driven bandwidth defined in \eqref{eq: def GL band mu}.\\ 
 
\noindent {\it Step 1:} For $h \in \mathcal H_1^N$, we successively have
\begin{align*}
&  \E_{\PP^N}\big[\big(\widehat \mu_{\mathrm GL}^N(t_0,x_0)-\mu_{t_0}(x_0)\big)^2\big]\\
 & \lesssim  \E_{\PP^N}\big[\big(\widehat \mu_{\mathrm GL}^N(t_0,x_0)-\widehat \mu_h^N(t_0,x_0)\big)^2\big]+  \E_{\PP^N}\big[\big(\widehat \mu_h^N(t_0,x_0)-\mu_{t_0}(x_0)\big)^2\big]  \\
 & \lesssim  \E_{\PP^N}\big[\big\{\big(\widehat \mu_{\widehat h^N}^N(t_0,x_0)-\widehat \mu_h^N(t_0,x_0)\big)^2-\mathsf V_h^N-\mathsf V_{\widehat h^N}^N\big\}_+  +\mathsf V_h^N+\mathsf V_{\widehat h^N}^N \big]  +  \E_{\PP^N}\big[\big(\widehat \mu_h^N(t_0,x_0)-\mu_{t_0}(x_0)\big)^2\big]  \\
&  \lesssim  \E_{\PP^N}\big[\mathsf A_{\max(\widehat h^N,h)}^N +\mathsf V_h^N+\mathsf V_{\widehat h^N}^N \big] +  \E_{\PP^N}\big[\big(\widehat \mu_h^N(t_0,x_0)-\mu_{t_0}(x_0)\big)^2\big]  \\
 & \lesssim  \E_{\PP^N}\big[\mathsf A_{h}^N\big]+\mathsf V_h^N + \E_{\PP^N}\big[\mathsf A_{\widehat h^N}^N+\mathsf V_{\widehat h^N}^N \big] +  \E_{\PP^N}\big[\big(\widehat \mu_h^N(t_0,x_0)-\mu_{t_0}(x_0)\big)^2\big] \\
 & \lesssim  \E_{\PP^N}\big[\mathsf A_{h}^N\big]+\mathsf V_h^N + \mathcal B_h^N(\mu)(t_0,x_0)^2,  
\end{align*}
where we applied Lemma \ref{lem: bias var mu} in order to obtain the last line.\\

\noindent {\it Step 2:} We first estimate $\mathsf A_{h}^N$. Write $\mu_h(t_0,x_0)$ for $\int_{\R^d}K_h(x_0-x)\mu_{t_0}(x)dx$. For $h,h'\in \mathcal H_1^N$ with $h' \leq h$, since
\begin{align*}
& \big(\widehat \mu_h^N(t_0,x_0)-\widehat \mu^N_{h'}(t_0,x_0)\big)^2 \\
&\leq  4\big(\widehat \mu_h^N(t_0,x_0)-\mu_{h}(t_0,x_0)\big)^2+4\big(\mu_h(t_0,x_0)-\mu_{t_0}(x_0)\big)^2 +4\big(\mu_{h'}(t_0,x_0)-\mu_{t_0}(x_0)\big)^2\\
&+4\big(\widehat \mu_{h'}^N(t_0,x_0)-\mu_{h'}(t_0,x_0)\big)^2,  
\end{align*}
we have
\begin{align*}
  \big(\widehat \mu_h^N(t_0,x_0)-\widehat \mu^N_{h'}(t_0,x_0)\big)^2-\mathsf V_h^N-\mathsf V_{h'}^N  & \leq 8\mathcal B_h^N(\mu)(t_0,x_0)^2+\big(4(\widehat \mu_h^N(t_0,x_0)-\mu_{h}(t_0,x_0))^2-\mathsf V_h^N\big)\\
&+\big(4(\widehat \mu_{h'}^N(t_0,x_0)-\mu_{h'}(t_0,x_0))^2-\mathsf V_{h'}^N\big)
\end{align*}
using $h' \leq h$ in order to bound $(\widehat \mu_{h'}^N(t,a)-\mu_{h'}(t_0,x_0))^2$ by the bias at scale $h$. Taking maximum over $h'\leq h$, we obtain
\begin{align}
& \max_{h' \leq h}\big\{\big(\widehat \mu_h^N(t_0,x_0)-\widehat \mu^N_{h'}(t_0,x_0)\big)^2-\mathsf V_h^N-\mathsf V_{h'}^N\big\}_+  \label{eq end lepski}\\
\leq & \;8\mathcal B_h^N(\mu)(t_0,x_0)^2+  \big\{4\big(\widehat \mu_h^N(t_0,x_0)-\mu_{h}(t_0,x_0)\big)^2-\mathsf V_h^N\big\}_+ \nonumber \\
&+\max_{h' \leq h}\big\{4\big(\widehat \mu_{h'}^N(t_0,x_0)-\mu_{h'}(t_0,x_0)\big)^2-\mathsf V_{h'}^N\big\}_+. \nonumber
\end{align}

\noindent {\it Step 3:} We estimate the expectation of the first stochastic term in the right-hand side of \eqref{eq end lepski}. We refine the computation of the term $II$ in the proof of Lemma \ref{lem: bias var mu}. By Theorem \ref{thm: concentration} and using estimates of the form  \eqref{eq: dev exp classique} and \eqref{eq: eq exp int}, we have
\begin{align*}
  &\E_{\PP^N}\big[\big\{4\big(\widehat \mu_h^N(t_0,x_0)-\mu_{h}(t_0,x_0)\big)^2-\mathsf V_h^N\big\}_+\big] \\
  &  = \int_0^\infty \PP^N\big(4\big(\widehat \mu_h^N(t_0,x_0)-\mu_{h}(t_0,x_0)\big)^2-\mathsf V_h^N \geq z\big)dz \\  
 & = \int_0^\infty \PP^N\big(|\widehat \mu_h^N(t_0,x_0)-\mu_h(t_0,x_0)| \geq \tfrac{1}{2}(\mathsf V_h^N+z)^{1/2}\big)dz \\
&  \leq 2\kappa_1 \int_{\mathsf V_h^N}^\infty  \exp\Big(-\frac{\kappa_2 Nh^d \tfrac{1}{4}z}{\kappa_3 |K|_2^2+|K|_\infty  \tfrac{1}{2}z^{1/2}}\Big)dz \\
& \lesssim \int_{\mathsf V_h^N}^\infty  \exp\Big(-\frac{\kappa_2Nh^dz}{8\kappa_3 |K|_2^2}\Big)dz+\int_{\mathsf V_h^N}^\infty \exp\Big(-\frac{\kappa_2 Nh^dz^{1/2}}{4|K|_\infty}\Big)dz\\
& \lesssim (Nh^d)^{-1}\exp\Big(-\frac{\kappa_2Nh^d \mathsf V_h^N}{8\kappa_3 |K|_2^2}\Big)+(Nh^d)^{-2}Nh^d (\mathsf V_h^N)^{1/2} \exp\Big(-\frac{\kappa_2 Nh^d (\mathsf V_h^N)^{1/2}}{4|K|_\infty}\Big)\\
& \lesssim (Nh^d)^{-1}N^{-\varpi_1 \kappa_2/(8\kappa_3)}+(Nh^d)^{-3/2}(\log N)^{1/2}\exp\big(-{\tfrac{\kappa_2|K|_2\varpi_1^{1/2}}{4|K|_\infty}(\log N)^{5/2}}\big),\\
& \lesssim N^{-2}
\end{align*}
as soon as 
$\varpi_1 \geq 16\kappa_2^{-1}\kappa_3$, thanks to 
 $\max_{h \in \mathcal H_1^N} (Nh^d)^{-1} \lesssim 1$, and using $\min_{h \in \mathcal H_1^N}h \geq (N^{-1}(\log N)^2)^{1/d}$ to show that the second term is negligible in front of $N^{-2}$.\\

\noindent {\it Step 4:} For the second stochastic term, we have the rough estimate
\begin{align*}
&  \E_{\PP^N}\big[\max_{h' \leq h}\big\{4\big(\widehat \mu_{h'}^N(t_0,x_0)-\mu_{h'}(t_0,x_0)\big)^2-\mathsf V_{h'}^N\big\}_+\big] \\
& \leq \sum_{h'\leq h} \E_{\PP^N}\big[\big\{4\big(\widehat \mu_{h'}^N(t_0,x_0)-\mu_{h'}(t_0,x_0)\big)^2-\mathsf V_{h'}^N\big\}_+\big]  \lesssim \mathrm{Card}(\mathcal H_1^N) N^{-2} \lesssim N^{-1} 
\end{align*}
where we used Step 3 to bound each term $ \E_{\PP^N}\big[\big\{4\big(\widehat \mu_{h'}^N(t_0,x_0)-\mu_{h'}(t_0,x_0)\big)^2-\mathsf V_{h'}^N\big\}_+\big]$ independently of $h$ together with $\mathrm{Card}(\mathcal H_1^N) \lesssim N$. In conclusion, we have through Steps 2-4 that $ \E_{\PP^N}\big[\mathsf A_{h}^N\big] \lesssim N^{-1} +\mathcal B_h^N(\mu)(t_0,x_0)^2$. Therefore, from Step 1, we conclude
$$ \E_{\PP^N}\big[\big(\widehat \mu_{\mathrm GL}^N(t_0,x_0)-\mu_{t_0}(x_0)\big)^2\big] \lesssim \mathcal B_h^N(\mu)(t_0,x_0)^2 + \mathsf V_h^N + N^{-1}$$  
for any $h \in \mathcal H_1^N$. Since $N^{-1} \lesssim \mathsf V_h^N $ always, the proof of Theorem \ref{thm: GL mu} is complete.

\subsection{Proof of Theorem \ref{thm: oracle b}} \label{sec proof of oracle b}

\subsubsection*{Preliminaries}
The assumptions of Theorem \ref{thm: oracle b} are in force in this section.
We first study the fluctuations of the random measure $\pi^N(dt,dx) - \pi(t,x)dtdx$, where $\pi^N(dt,dx) = N^{-1}\sum_{i = 1}^N\delta_{X_t^i}(dx)X^i(dt)$. 


\begin{lem} \label{lem: dev b}
Let $\phi: (0,T] \times \R^d  \rightarrow \R$ be bounded and compactly supported. The following decomposition holds
\begin{align*}
\int_{[0,T] \times \R^d}&\phi(t,x)(\pi^N(dt,dx)-\pi(t,x)dtdx\big) \nonumber \\
&=  \int_0^T\int_{\R^d}\phi(t,x)\big(b(t,x,\mu_t)(\mu_t^N(dx)-\mu_t(x)dx)+\xi_t^N(x)\mu_t^N(x)dx\big)dt+\mathcal M_T^N(\phi), \label{eq: decomp mg}
\end{align*}
where $\xi_t^N(x) = b(t,x,\mu_t^N)-b(t,x,\mu_t)$ and $\mathcal M_t^N(\phi) = \big(\mathcal M_t^N(\phi)^1,\ldots, \mathcal M_t^N(\phi)^d\big)$ is a $d$-dimensional $\mathbb P^N$-continuous martingale with predictable compensator such that 
\begin{equation} \label{eq: controle bracket}
\langle \mathcal M_.^N(\phi)^k\rangle_t \leq N^{-1}|\mathrm{Tr}(c)|_\infty \int_0^t \int_{\R^d}\phi(s,x)^2\mu_s^N(dx)ds.
\end{equation}


\end{lem}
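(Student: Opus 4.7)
The plan is to unfold the definition of $\pi^N$ componentwise, substitute the SDE \eqref{eq:diff basique} for $dX_t^i$, split the drift into its limiting part and the perturbation $\xi_t^N$, and identify the stochastic-integral remainder as the martingale $\mathcal M^N(\phi)$. The quadratic-variation bound then follows from a trace inequality.

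First, by definition of $\pi^N$, for each $k = 1,\ldots,d$,
\begin{equation*}
\int_{[0,T]\times \R^d}\phi(t,x)(\pi^N)^k(dt,dx) = N^{-1}\sum_{i=1}^N \int_0^T \phi(t,X_t^i)\,dX_t^{i,k}.
\end{equation*}
Plugging in the SDE and using $b(t,x,\mu_t^N) = b(t,x,\mu_t) + \xi_t^N(x)$, the drift piece becomes
\begin{equation*}
\int_0^T\!\!\int_{\R^d}\phi(t,x)\,b(t,x,\mu_t)\mu_t^N(dx)\,dt + \int_0^T\!\!\int_{\R^d}\phi(t,x)\,\xi_t^N(x)\mu_t^N(dx)\,dt,
\end{equation*}
while the stochastic piece is
\begin{equation*}
\mathcal M_T^N(\phi)^k = N^{-1}\sum_{i=1}^N\sum_{j=1}^d\int_0^T \phi(t,X_t^i)\sigma_{kj}(t,X_t^i)\,dB_t^{i,j}.
\end{equation*}
Subtracting $\int_0^T\!\int \phi(t,x)b(t,x,\mu_t)\mu_t(x)dx\,dt = \int\phi\pi\,dtdx$ from both sides and regrouping the first drift integral with this term produces precisely $\phi(t,x)b(t,x,\mu_t)(\mu_t^N(dx)-\mu_t(x)dx)$, giving the claimed decomposition.

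It remains to verify that $\mathcal M^N(\phi)$ is a genuine (continuous) martingale and to estimate its brackets. Boundedness of $\phi$ together with the linear growth of $\sigma$ (Assumption \ref{ass: minimal prop sigma}) and the moment bound of Lemma \ref{lem moment class} ensure that $\E_{\PP^N}[\int_0^T\phi(t,X_t^i)^2|\sigma(t,X_t^i)|^2 dt] < \infty$, so each stochastic integral is a true martingale, and so is their finite linear combination. For the bracket, independence of the $B^i$ across $i$ and orthogonality of the components of each $B^i$ yield
\begin{equation*}
\langle \mathcal M_\cdot^N(\phi)^k\rangle_t = N^{-2}\sum_{i=1}^N \int_0^t \phi(s,X_s^i)^2\, c_{kk}(s,X_s^i)\,ds,
\end{equation*}
since $\sum_j \sigma_{kj}^2 = c_{kk}$. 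Because $c$ is positive semidefinite, $c_{kk}\le \mathrm{Tr}(c)\le |\mathrm{Tr}(c)|_\infty$, and recognizing the empirical measure on the right-hand side gives \eqref{eq: controle bracket}.

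The only mildly delicate point is justifying the martingale property uniformly in $N$; this is standard given the integrability afforded by the boundedness of $\phi$ and the Lipschitz assumption on $\sigma$, so no further obstacle arises.
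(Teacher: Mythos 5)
Your proof is correct and follows essentially the same route as the paper: unfold $\pi^N$ componentwise, substitute the SDE, split the drift via $b(t,x,\mu_t^N)=b(t,x,\mu_t)+\xi_t^N(x)$, and identify $\mathcal M_t^N(\phi)=N^{-1}\sum_i\int_0^t\phi(s,X_s^i)\sigma(s,X_s^i)dB_s^i$ together with the bracket bound via $c_{kk}\le \mathrm{Tr}(c)$. The only cosmetic difference is that you invoke linear growth of $\sigma$ and the moment bound for integrability, whereas Assumption \ref{ass: minimal prop sigma} already makes $\sigma$ bounded, so the true-martingale property is even more immediate than you suggest.
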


\begin{proof}
we have
\begin{align*}
& \int_{[0,T] \times \R^d}\phi(t,x)\big(\pi^N(dt,dx) -\pi(t,x)dtdx\big) \\
& =  N^{-1}\sum_{i = 1}^N \int_0^T \phi(t,X_t^i)\big(\sigma(t,X_t^i)dB_t^i+b(t,X_t^i,\mu_t^N)dt\big)-  \int_{[0,T] \times \R^d}\phi(t,x)b(t,x,\mu_t)\mu_t(x)dtdx \\
& = \int_0^T\int_{\R^d} \phi(t,x)\big( b(t,x,\mu_t^N)\mu_t^N(dx)-b(t,x,\mu_t)\mu_t(x)dx\big)dt+ \mathcal M_T^N(\phi), 
\end{align*}
where 
$$\mathcal M_t^N(\phi) =  N^{-1}\sum_{i = 1}^N\int_0^t\phi(s,X_s^i)\sigma(s,X_s^i)dB_s^i$$
is a martingale with bracket satisfying  \eqref{eq: controle bracket}. The result follows.

\end{proof}

We next have a bias-variance estimate for the quadratic risk of $\widehat \pi_{\boldsymbol h}(t_0,x_0)$, in the same spirit as in Lemma \ref{lem: bias var mu}.

\begin{lem} \label{lem: bias var pi}
Assume that $H\otimes K$ is a bounded and compactly supported kernel on $(0,T) \times \R^d$. Let $\boldsymbol h  \in \mathcal H_2^N$. Then
$$
\E_{\PP^N}\big[\big|\widehat \pi_{\boldsymbol h}^N(t_0,x_0)-\pi(t_0,x_0)\big|^2\big] 
 \lesssim 
 \mathcal B_{\boldsymbol h}^N(\pi)(t_0,x_0)^2+\mathsf V_{\boldsymbol h}^N,
$$
up to a constant that (continuously) depends on $(t_0,x_0)$, $|H\otimes K|_\infty$ and $\mathfrak b$, and where $ \mathcal B_{\boldsymbol h}^N(\pi)\big(t_0,x_0)$ is defined in \eqref{eq: bias pi} and $\mathsf V_{\boldsymbol h}^N$ in \eqref{def upper bivariance}.
\end{lem}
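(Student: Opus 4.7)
I would mirror the proof structure of Lemma \ref{lem: bias var mu}: split $\widehat\pi_{\boldsymbol h}^N(t_0,x_0)-\pi(t_0,x_0) = B + S$, where $B = \int_0^T\int_{\R^d}\phi_{\boldsymbol h}(t,x)\pi(t,x)dxdt-\pi(t_0,x_0)$ with $\phi_{\boldsymbol h}(t,x) = (H\otimes K)_{\boldsymbol h}(t_0-t,x_0-x)$. By definition, $|B|\leq \mathcal B_{\boldsymbol h}^N(\pi)(t_0,x_0)$, so all the work is in controlling the stochastic remainder $S = \int\phi_{\boldsymbol h}(t,x)(\pi^N(dt,dx)-\pi(t,x)dtdx)$. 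Applying Lemma \ref{lem: dev b} to the test function $\phi_{\boldsymbol h}$ further decomposes $S = I_1+I_2+I_3$, where $I_1$ is the $\mu_t^N-\mu_t$ fluctuation weighted by $\phi_{\boldsymbol h}\,b(t,x,\mu_t)$, $I_2$ carries the drift discrepancy $\xi_t^N(x) = b(t,x,\mu_t^N)-b(t,x,\mu_t)$, and $I_3 = \mathcal M_T^N(\phi_{\boldsymbol h})$ is the martingale term from \eqref{eq: controle bracket}.

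For $I_1$, I would apply Theorem \ref{thm: concentration} componentwise with the probability measure $\rho(dt) = dt/T$ and test function $T\phi_{\boldsymbol h}(t,x)b^k(t,x,\mu_t)$. Lemma \ref{lem: loc unif mu above} provides the local bounds $|b|\leq \kappa_5$ and $\mu_t\leq \kappa_3$ on $\mathrm{supp}(\phi_{\boldsymbol h})$, whence $|\phi_{\boldsymbol h}\,b^k|_\infty \lesssim (h_1 h_2^d)^{-1}$ and $|\phi_{\boldsymbol h}\,b^k|_{L^2(\nu)}^2 \lesssim (h_1 h_2^d)^{-1}|H\otimes K|_2^2$. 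Integrating the Bernstein tail via \eqref{eq: dev exp classique}--\eqref{eq: eq exp int}, exactly as in the proof of Lemma \ref{lem: bias var mu}, gives $\E_{\PP^N}[|I_1|^2]\lesssim (Nh_1h_2^d)^{-1} \lesssim \mathsf V_{\boldsymbol h}^N$. For $I_3$, It\^o's isometry combined with \eqref{eq: controle bracket} yields
$$\E_{\PP^N}[|I_3|^2] \leq dN^{-1}|\mathrm{Tr}(c)|_\infty\int_0^T\int_{\R^d}\phi_{\boldsymbol h}(s,x)^2\mu_s(x)dxds \lesssim \kappa_3|H\otimes K|_2^2(Nh_1h_2^d)^{-1},$$
so $\E_{\PP^N}[|I_3|^2]\lesssim \mathsf V_{\boldsymbol h}^N$ as well.

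The delicate step is $I_2$, and this is where the restriction $h_1\leq (\log N)^{-2}$ from \eqref{eq: condition grid 2} is used. I would exploit the universal Lipschitz-in-$\mathcal W_1$ bound $|\xi_t^N|_\infty \leq |b|_{\mathrm{Lip}}\,\mathcal W_1(\mu_t^N,\mu_t)$ and the fact that $\phi_{\boldsymbol h}$ is time-supported on a window of length $O(h_1)$ around $t_0$ to factor out $\sup_{|t-t_0|\leq Ch_1}|\xi_t^N|_\infty$; the remaining mass factor $\int_0^T\int|\phi_{\boldsymbol h}|\mu_t^N(dx)dt$ is bounded in $L^2(\PP^N)$ by splitting $\mu_t^N=\mu_t+(\mu_t^N-\mu_t)$ and using the local bound on $\mu$ plus Theorem \ref{thm: concentration}. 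A triangle inequality $\mathcal W_1(\mu_t^N,\mu_t)\leq \mathcal W_1(\mu_{t_0}^N,\mu_{t_0}) + \mathcal W_1(\mu_t^N,\mu_{t_0}^N) + \mathcal W_1(\mu_t,\mu_{t_0})$ then controls the supremum by a pointwise Fournier-Guillin moment (contributing $\lesssim N^{-2/\max(2,d)}$, with the $k$-fold expansion of Lemma \ref{lem: dec diff drifts} available if the pointwise Lipschitz bound is too coarse in high dimension) plus a $\sqrt{h_1}$-time modulus coming from the diffusive regularity of $t\mapsto \mu_t$ and its empirical analogue. The constraint $h_1\leq (\log N)^{-2}$ guarantees that this $\sqrt{h_1}$ modulus is absorbed by $\mathsf V_{\boldsymbol h}^N = \varpi_2|H\otimes K|_2^2(\log N)/(Nh_1h_2^d)$ across the entire grid $\mathcal H_2^N$. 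Summing the bounds on $I_1,I_2,I_3$ with the bias estimate completes the proof.
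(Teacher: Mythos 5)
Your bias term, your treatment of $I_1$ via Theorem \ref{thm: concentration} with the estimates of Lemma \ref{lem: loc unif mu above}, and (essentially) your treatment of the martingale term $I_3$ follow the paper's proof; for $I_3$ note only that the bracket bound \eqref{eq: controle bracket} involves $\mu_s^N(dx)$, not $\mu_s(x)dx$, so you still need one more application of Theorem \ref{thm: concentration} to replace the empirical measure by $\mu_s$ in expectation under $\PP^N$ -- a small, fixable omission.

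The genuine gap is in $I_2$. Your primary route bounds $|\xi_t^N|_\infty\leq |b|_{\mathrm{Lip}}\,\mathcal W_1(\mu_t^N,\mu_t)$ and invokes Fournier--Guillin, but this is exactly the dimensional obstruction the whole paper is built to circumvent: $\E\big[\mathcal W_1(\mu_t^N,\mu_t)^2\big]\asymp N^{-2/\max(2,d)}$, and for $d\geq 3$ this is far larger than $\mathsf V_{\boldsymbol h}^N\asymp (\log N)(Nh_1h_2^d)^{-1}$ over the admissible grid (take $h_1\asymp(\log N)^{-2}$, $h_2\asymp 1$). The parenthetical ``the $k$-fold expansion is available if needed'' is where all the work actually lies: the paper proves a per-particle deviation bound (Lemma \ref{lem: fluctuation xi}), built from the Taylor expansion \eqref{eq: taylor linear diff}, the sub-Gaussian $U$-statistics moments of Lemma \ref{lem rosenthal}, the Fournier--Guillin bound only for the remainder, and the Girsanov transfer \eqref{eq: first estimate changproba} from $\overline{\PP}^N$ to $\PP^N$ (which you never invoke -- your moments are stated as if the particles were i.i.d.). This yields $\sup_t\E_{\PP^N}\big[|\xi_t^N(X_t^N)|^4\big]\lesssim N^{-2}$, and the paper then handles $I_2$ by two applications of Cauchy--Schwarz, with no supremum over a time window and no time-regularity of $\mu_t$ or $\mu_t^N$ at all. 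Your alternative device -- factoring out $\sup_{|t-t_0|\leq Ch_1}|\xi_t^N|_\infty$ and absorbing a $\sqrt{h_1}$ time modulus thanks to $h_1\leq(\log N)^{-2}$ -- is quantitatively wrong: the squared modulus is of order $h_1$, which can be as large as $(\log N)^{-2}$, whereas $\mathsf V_{\boldsymbol h}^N$ can be as small as $(\log N)^3N^{-1}$, so nothing is absorbed. Moreover, the constraint $h_1\leq(\log N)^{-2}$ plays no role in this lemma; it is needed only later, in Step 3 of the proof of Theorem \ref{thm: oracle b}, to make the positive-part term involving $\xi_t^N$ negligible at the $N^{-2}$ level required by the Lepski argument.
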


\begin{proof}
Write $\widehat \pi_{\boldsymbol h}^N(t_0,x_0)-\pi(t_0,x_0) = I+II$, with
$$
I = \int_0^T\int_{\R^d} (H\otimes K)_{\boldsymbol h}(t_0-t,x_0-x)\pi(t,x)dxdt-\pi(t_0,x_0)
$$
and
$$
II = \int_0^T\int_{\R^d}(H\otimes K)_{\boldsymbol h}(t_0-t,x_0-x)(\pi^N(dt,dx)-\pi(t,x)dtdx\big).
$$
We have $\big|I\big|^2 \leq \mathcal B_{\boldsymbol h}^N(\pi)(t_0,x_0)^2$ for the squared bias term. For the variance term, applying the decomposition of Lemma \ref{lem: dev b} with test function $\phi(t,x) = (H\otimes K)_{\boldsymbol h}(t_0-t,x_0-x)$, we obtain
$$
\big|II\big|^2  \lesssim III + IV + V,
$$
with
\begin{align*}
III & = \Big|\int_0^T\int_{\R^d}\phi(t,x)b(t,x,\mu_t)(\mu_t^N(dx)-\mu_t(x)dx\big)dt\Big|^2,\\
IV & = \Big|\int_0^T\int_{\R^d}\phi(t,x)\xi^N_t(x)\mu_t^N(dx)dt\Big|^2,\\
V& = \big|\mathcal M_T^N(\phi)\big|^2,
\end{align*}
where $\xi_t^N(x) = b(t,x,\mu_t^N)- b(t,x,\mu_t)$. Writing $b = (b^1,\ldots, b^d)$ in components, note first that for $\nu(dt,dx) = \mu_t(dx)T^{-1}dt$, we have
\begin{equation} \label{eq: estimes phi}
\big|\phi \,b^k(\cdot,\mu_\cdot)\big|^2_{L^2(\nu)} \leq \kappa_3\kappa_5^2T^{-1}|H\otimes K|_2^2 (h_1h_2^d)^{-1},\;\;\big|\phi \, b^k(\cdot,\mu_\cdot)\big|_\infty \leq \kappa_5|H \otimes K|_\infty (h_1h_2^d)^{-1},
\end{equation}
by Lemma \ref{lem: loc unif mu above} and the compactness of the support of $\phi$. By Theorem  \ref{thm: concentration} applied to $\nu^N(dt,dx)-\nu(dt,dx) = (\mu_t^N(dx)-\mu_t(dx))T^{-1}dt$, it follows that
\begin{align*}
\E_{\PP^N}[III] & \lesssim \sum_{k = 1}^d \int_0^\infty \PP^N\big(\big|\int_{[0,T] \times \R^d}\phi(t,x)b^k(t,x,\mu_t)(\mu_t^N(dx)-\mu_t(x)dx)T^{-1}dt\big| \geq z^{1/2}\big)dz \\
 & \leq 2d\kappa_1  \int_0^\infty \exp\Big(-\frac{\kappa_2 Nh_1h_2^dz}{\kappa_3\kappa_5^2T^{-1}|H \otimes K|_2^2 +\kappa_5|H \otimes K|_\infty z^{1/2}}\Big)dz \\
 & \lesssim  (Nh_1h_2^d)^{-1}(1+(Nh_1h_2^d)^{-1}) \\
 & \lesssim \mathsf{V}_{\boldsymbol h}^N,
\end{align*}
using $\max_{\boldsymbol h \in \mathcal H_2^N} (Nh_1h_2^d) \lesssim 1$.
We conclude
\begin{equation} \label{eq: first esti var}
\E_{\PP^N}\big[III\big]
 \lesssim  \mathsf V_{\boldsymbol h}^N.
\end{equation}
We next turn to the term $IV$. We need a deviation result for the fluctuation $\xi_t^N(x) = b(t,x,\mu_t^N)- b(t,x,\mu_t)$ that will also be helpful later.
\begin{lem} \label{lem: fluctuation xi}
There exist positive numbers $\kappa_6, \kappa_7$ and $\kappa_8$, depending on $\mathfrak b$,  such that for large enough $N$
$$
\sup_{0 \leq t \leq T}\PP^N\big(|\xi_t^N(X_t^N)| \geq u\big) \leq \kappa_6\exp\Big(-\frac{\kappa_7 Nu^2}{1+N^{1/2}u}\Big)\;\;\text{for}\;\;u \geq \kappa_8 N^{-1/2}.
$$
\end{lem}

\begin{proof}
Writing $\xi_t^N(X_t^N) = \big(\xi_t^N(X_t^N)^1,\ldots, \xi_t^N(X_t^N)^d\big)$ in components, we have
$$\PP^N\big(|\xi_t^N(X_t^N)| \geq u\big) \leq \sum_{\ell = 1}^d \PP^N\big(|\xi_t^N(X_t^N)^\ell| \geq u d^{-1}\big)$$
It suffices thus to prove the result for each component $\xi_t^N(X_t^N)^\ell$, substituting $\kappa_6$ and $\kappa_8$ by $d\kappa_6$ and $d\kappa_8$ to obtain the general case. For notational simplicity, we drop the superscript $\ell$ and prove the result for $\xi_t^N(X_t^N)$ instead of $|\xi_t^N(X_t^N)|$, up to a inflation of $\kappa_6$ by a factor 2.\\

By \eqref{eq: taylor linear diff} in the proof of Proposition \ref{prop: controle change of proba}, we may write
$$\xi_t^N(X_t^N)  = \zeta_t^N(X_t^N)+ \mathcal R_k,$$
with
$$\zeta_t^N(X_t^N) = \sum_{\ell = 1}^{k-1} \frac{1}{{\ell !}}\int_{(\R^d)^{\ell}}  \delta_\mu^\ell b(t,X_t^N,y^{\ell}, \mu_t)(\mu_t^N-\mu_t)^{\otimes \ell}(dy^{\ell}),$$
having $k=1$ under Assumption \ref{ass: basic lip} (i), with $\big|\mathcal R_k\big| \lesssim \mathcal W_1(\mu_t^N, \mu_t)  \wedge  \mathcal W_1(\mu_t^N, \mu_t)^{k}$ under Assumption \ref{ass: basic lip} (ii) by \eqref{eq control remainder}, and having $k=1$ with $\delta_\mu^1b=\widetilde b$ and $\mathcal R_k = 0$ under Assumption \ref{ass: basic lip} (iii). It is enough to prove the deviation bound for each term separately.\\ 

Let $u \geq 0$. We first bound the remainder term $\mathcal R_k$. Applying \eqref{eq: first estimate changproba} in the proof of Theorem \ref{thm: concentration} for the event $\mathcal A^N = \{ \mathcal W_1(\mu_t^N, \mu_t) \wedge \mathcal W_1(\mu_t^N, \mu_t)^k \geq u\}$  we obtain
\begin{align*}
\PP^N( \mathcal W_1(\mu_t^N, \mu_t) \wedge \mathcal W_1(\mu_t^N, \mu_t)^k \geq u) & \leq C_1^{K(K+1)/8}\overline{\PP}^N\big( \mathcal W_1(\mu_t^N, \mu_t) \wedge \mathcal W_1(\mu_t^N, \mu_t)^k \geq u\big)^{4^{-K}} \\
& \leq C_1^{K(K+1)/8}\big(\varepsilon_N(u^{1/k})\wedge \varepsilon_N(u)\big)^{4^{-K}}
\end{align*} 
where the last estimate stems from the deviation inequality \eqref{eq: fournier guillin} of Fournier and Guillin \cite{FournierGuillin}. Under Assumption \ref{ass: basic lip} (i), with ($d=1$ and $k=1$) or under Assumption \ref{ass: basic lip} (ii) with ($d=2$ and $k \geq 2$) or ($d \geq 3$ and $k \geq d/2$), 
$$\varepsilon_N(u^{1/k})\wedge \varepsilon_N(u)\lesssim \exp(-\mathfrak CN u^{2})\;\;\text{for every}\;u \geq 0$$
as follows from the definition of $\varepsilon_N(x)$ in \eqref{eq: fournier guillin}. Therefore $\mathcal R_k$ has the right order.
%
%
%
%
As for the main term, we first note that for every $p \geq 2$, we have
\begin{align*}
\E_{\overline{\PP}^N}\big[\big| \zeta_t^N(X_t^N)\big|^p\big] 
 & \leq   e^{p-1}\sum_{\ell = 1}^{k-1} \frac{1}{{\ell !}}\, \E_{\overline{\PP}^N}\Big[\Big|\int_{(\R^d)^{\ell}}  \delta_\mu^\ell b(t,X_t^N,y^{\ell}, \mu_t)(\mu_t^N-\mu_t)^{\otimes \ell}(dy^{\ell})\Big|^p\Big]  \nonumber \\ & \leq N^{-p/2}p! C_5^p, \label{eq: moment p et C5}
\end{align*}
by Lemma \ref{lem rosenthal} and Cauchy-Schwarz's inequality, for large enough $N$ and some $C_5$ depending on $\mathfrak b$. With no loss of generality, we take $C_5 \geq 1$. In particular, by Cauchy-Schwarz's inequality,
$$\big|\E_{\overline{\PP}^N}[\zeta_t^N(X_t^N)]\big| \leq \sqrt{2}C_5N^{-1/2} = \kappa_8N^{-1/2}$$
that defines the constant $\kappa_8$. We next use the following version of Bernstein inequality that can be found in Lemma 8 in Birg\'e and Massart \cite{birge1998minimum}: if $Z$ is a real-valued random variable such that $\E[|Z|^p] \leq \frac{p!}{2}v^2c^{p-2}$ for $c, v >0$ and every $p \geq 2$, then
\begin{equation} \label{eq: bernstein birge}
\PP(Z-\E[Z] \geq u) \leq \exp\Big(-\frac{u^2/2}{v^2+cu}\Big)\;\;\text{for every}\;\;u \geq 0.
\end{equation}
We then apply \eqref{eq: bernstein birge} to $Z = \zeta_t^N(X_t^N)$ with $c=C_5N^{-1/2}$ and $v=\sqrt{2}N^{-1/2}C_5$ and obtain
\begin{align*}
\overline{\PP}^N\big(\zeta_t^N(X_t^N)-\E_{\overline{\PP}^N}[\zeta_t^N(X_t^N)] \geq u\big) &  \leq \exp\Big(-\frac{C_6Nu^2}{1+N^{1/2}u}\Big)\;\;\text{ for every}\;\;u \geq 0,
\end{align*}
with $C_6 = (4C_5^2)^{-1}$ using $C_5 \geq 1$. Finally, for $u \geq \kappa_8N^{-1/2}$, setting $u'=u-\kappa_8N^{-1/2}$ and applying \eqref{eq: first estimate changproba} in the proof of Theorem \ref{thm: concentration} for the event $\{ \zeta_t^N(X_t^N) \geq u\}$, we derive
\begin{align*}
\PP^N\big(\zeta_t^N(X_t^N) \geq u\big) & \lesssim  \overline{\PP}^N\big(\zeta_t^N(X_t^N) \geq u\big)^{4^{-K}} \\
& \leq \overline{\PP}^N\big(\zeta_t^N(X_t^N)-\E_{\overline{\PP}^N}[\zeta_t^N(X_t^N)]  \geq u'\big)^{4^{-K}}  \\
& \leq  \exp\Big(-\frac{{C_64^{-K}}N(u')^2}{1+N^{1/2}u'}\Big) \\
& \lesssim  \exp\Big(-\frac{C_64^{-K}Nu^2}{1+N^{1/2}u}\Big)
\end{align*}
and the lemma follows with $\kappa_7 = 4^{-K}C_6$.
\end{proof}
We are ready to bound the term $IV$. Applying Cauchy-Schwarz's inequality twice, we have 
\begin{align*}
\E_{\PP^N}\big[IV\big] & \leq \E_{\PP^N}\Big[\Big(\int_0^T\int_{\R^d}|\phi(t,x)|^2\mu_t^N(dx)dt\Big)^{2}\Big]^{\frac{1}{2}}\E_{\PP^N}\Big[\Big(\int_0^T\int_{\R^d}|\xi_t^N(x)|^2\mu_t^N(dx)dt\Big)^2\Big]^\frac{1}{2}. 
\end{align*}
On the one hand, by exchangeability and Lemma \ref{lem: fluctuation xi}, we have
\begin{align}
 \E_{\PP^N}\Big[\Big(\int_0^T\int_{\R^d}|\xi_t^N(x)|^2\mu_t^N(dx)dt\Big)^2\Big]^\frac{1}{2} 
& \leq \Big(T\int_0^T\E_{\PP^N}\Big[|\xi_t^N(X_t^N)|^4\Big]dt
\Big)^\frac{1}{2} \nonumber\\
& \lesssim \sup_{0 \leq t \leq T} \Big(\int_0^\infty \PP^N\big(\big|\xi_t^N(X_t^N)\big| \geq z^{1/4})dz\Big)^{1/2} \nonumber \\
& \lesssim \Big(\kappa_{8}^{4}N^{-2}+\kappa_6\int_0^\infty \exp\Big(-\frac{\kappa_7Nz^{1/2}}{1+N^{1/2}z^{1/4}}\Big)dz\Big)^{1/2} \nonumber \\
&  \lesssim N^{-1}. \label{eq: controle xi holder}
\end{align} 
On the other hand
\begin{align}
&  \E_{\PP^N}\Big[\Big(\int_0^T\int_{\R^d}\phi(t,x)^2\mu_t^N(dx)dt\Big)^{2}\Big]^{1/2} \nonumber\\
& \lesssim  \int_0^T\int_{\R^d}\phi(t,x)^2\mu_t(x)dxdt + \E_{\PP^N}\Big[\Big|\int_{[0,T] \times \R^d}\phi(t,x)^2(\mu_t^N(dx)-\mu_t(dx))dt\Big|^{2}\Big]^{1/2} \nonumber \\
& \lesssim  |\phi|_2^2+\Big(\int_0^\infty\PP^N\big(\big|\int_{[0,T] \times \R^d}\phi(t,x)^2(\mu_t^N(dx)-\mu_t(dx))T^{-1}dt\big| \geq z^{1/2}\big)\Big)^{1/2} \nonumber \\
& \lesssim  |\phi|_{2}^2+\Big(2\kappa_1\int_0^\infty \exp\Big(-\frac{\kappa_2Nz}{\kappa_3T^{-1}|\phi|_{4}^{4}+|\phi|_\infty^2 z^{1/2}}\Big)dz\Big)^{1/2} \nonumber \\
& \lesssim  |\phi|_{2}^2+\big(N^{-1}|\phi|_4^4+N^{-2}|\phi|_\infty^4\big)^{1/2} \lesssim   |\phi|_{2}^2+N^{-1/2}|\phi|_4^2+N^{-1}|\phi|_\infty^2, \label{eq: holder 1}
\end{align}
using Lemma \ref{lem: loc unif mu above} and the fact that $\phi$ is compactly supported to obtain the first term and Theorem \ref{thm: concentration} applied to $\nu^N(dt,dx)-\nu(dt,dx)=(\mu^N_t(dx)-\mu_t(dx))T^{-1}dt$ together with $|\phi^2|_{L^2(\nu)}^2 \leq \kappa_3T^{-1}|\phi|_4^4$ to obtain the second term.
Putting together \eqref{eq: controle xi holder} and \eqref{eq: holder 1}, we conclude
\begin{align}
\E_{\PP^N}\big[IV\big] & \lesssim N^{-1}\big(   |\phi|_{2}^2+N^{-1/2}|\phi|_4^2+N^{-1}|\phi|_\infty^2 \big) \nonumber \\
& \lesssim  (Nh_1h_2^d)^{-1}(1+(Nh_1h_2^d)^{-1/2}+(Nh_1h_2^d)^{-1}\big)
 \lesssim \mathsf V_{\boldsymbol h}^N. \label{eq: last remainder}
\end{align}
Finally, by Lemma \ref{lem: dev b} and Theorem \ref{thm: concentration} again
we have
\begin{align}
\E_{\mathbb P^N}\big[V\big] & = \sum_{k = 1}^d\E_{\mathbb P^N}\big[\langle \mathcal M_\cdot^N(\phi)^k\rangle_T\big] \nonumber \\
& \leq d N^{-1}|\mathrm{Tr}(c)|_\infty  \E_{\mathbb P^N}\Big[\int_{[0,T] \times \R^d}\phi(s,x)^2\mu_s^N(dx)ds\Big] \nonumber \\
& \lesssim N^{-1}|\phi|_2^2 + N^{-1}\E_{\mathbb P^N}\Big[\Big|\int_{[0,T] \times \R^d}\phi(s,x)^2\big(\mu_s^N(dx)-\mu_s(dx)\big)T^{-1}ds\Big|\Big] \nonumber \\
& \lesssim N^{-1}|\phi|_2^2+N^{-1} \int_0^\infty \PP^N\big(\big|\int_{[0,T] \times \R^d}\phi(s,x)^2\big(\mu_s^N(dx)-\mu_s(x)\big)T^{-1}ds \big| \geq z\big)dz \nonumber \\
& \leq  N^{-1}|\phi|^2_2 + N^{-1}2\kappa_1 \int_0^\infty \exp\Big(-\frac{\kappa_2Nz^2}{\kappa_3T^{-1}|\phi|_{4}^{4}+|\phi|_\infty^2 z}\Big)dz \nonumber \\
& \lesssim N^{-1}|\phi|^2_2 + N^{-3/2}|\phi|_4^2 + N^{-2}|\phi|_\infty^2 \nonumber \\
& \lesssim (Nh_1h_2^d)^{-1}\big(1+ (Nh_1h_2^d)^{-1/2}+ (Nh_1h_2^d)^{-1}\big)
 \lesssim \mathsf V_{\boldsymbol h}^N. \label{eq: mg bound last}
\end{align}
Putting together \eqref{eq: first esti var}, \eqref{eq: last remainder} and \eqref{eq: mg bound last} establishes $\E_{\mathbb P^N}\big[II^2\big] \lesssim \mathsf V_{\boldsymbol h}^N$ and concludes the proof of Lemma \ref{lem: bias var pi}.
\end{proof}

\subsubsection*{Completion of proof of Theorem \ref{thm: oracle b}}
Let $(h,\boldsymbol h) \in \mathcal H_1^N \times \mathcal H_2^N$ and $(t_0,x_0) \in (0,T) \times \R^d$. Remember that we set $\pi(t,x) = b(t,x,\mu_t)\mu_t(x)$.\\

\noindent {\it Step 1:} We plan to use the decomposition
\begin{align*}
\widehat b_{h,\boldsymbol h}^N(t_0,x_0)_{\varpi_3} - b(t_0,x_0,\mu_{t_0}) = I+II,
\end{align*}
with
$$I = \frac{\pi(t_0,x_0)\big(\mu_{t_0}(x_0)-\widehat \mu_h^N(t_0,x_0) \vee \varpi_3\big)}{\mu_{t_0}(x_0)\widehat \mu_h^N(t_0,x_0)\vee \varpi_3}  
$$
and
$$II = \frac{\big(\widehat \pi_{\boldsymbol h}^N(t_0,x_0)-\pi(t_0,x_0)\big)\mu_{t_0}(x_0)}{\mu_{t_0}(x_0)\widehat \mu_h^N(t_0,x_0)\vee \varpi_3}.
$$
First, we have 
\begin{align*}
|I| & \leq \frac{\kappa_5}{\varpi_3}|\mu_{t_0}(x_0)-\widehat \mu_h^N(t_0,x_0) \vee \varpi_3 | \lesssim |\mu_{t_0}(x_0)-\widehat \mu_h^N(t_0,x_0)| 
\end{align*}
as soon as $\varpi_3 \leq \kappa_4$
by Lemma \ref{lem: loc unif mu above},  for some (small) $r>0$ fixed throughout. 
In the same way, 
$$|II| \leq \varpi_3^{-1}|\widehat \pi_{\boldsymbol h}^N(t_0,x_0)-\pi(t_0,x_0)|.$$
Picking $h = \widehat h^N$, $\boldsymbol h = \widehat {\boldsymbol h}^N$, taking square and expectation, we have thus established
\begin{align} 
& \E_{\PP^N}\big[\big|\widehat b_{\mathrm{GL}}^N(t_0,x_0)- b(t_0,x_0,\mu_{t_0})\big|^2\big] \nonumber \\
& \lesssim \E_{\PP^N}\big[\big(\widehat \mu_{\mathrm{GL}}^N(t_0,x_0)-\mu_{t_0}(x_0)\big)^2\big]+ \E_{\PP^N}\big[\big|\widehat \pi_{\widehat {\boldsymbol h}^N}^N(t_0,x_0)-\pi(t_0,x_0)\big|^2\big] \label{eq first maj mu}
\end{align}
as soon as $\varpi_3 \leq \kappa_4$. By Theorem \ref{thm: GL mu}, we already have the desired bound for the first term.\\

\noindent {\it Step 2:} We study the second term in the right-hand side of \eqref{eq first maj mu}. For any $\boldsymbol h \in \mathcal H_2^N$, similarly to the proof of Step 1 in Theorem \ref{thm: GL mu}, we have
$$\E_{\PP^N}\big[\big|\widehat \pi_{\widehat {\boldsymbol h}^N}^N(t_0,x_0)-\pi(t_0,x_0)\big|^2\big] \lesssim \E_{\PP^N}\big[\mathsf A_{\boldsymbol h}^N\big]+\mathsf V_{\boldsymbol h}^N + \mathcal B_{\boldsymbol h}^N(\pi)(t_0,x_0)^2$$
thanks to Lemma \ref{lem: bias var pi}. In order to estimate $\E_{\PP^N}\big[\mathsf A_{\boldsymbol h}^N\big]$, we repeat Step 2 of the proof of Theorem \ref{thm: GL mu} and obtain
\begin{align}
& \max_{\boldsymbol {h}' \preceq \boldsymbol h}\big\{\big|\widehat \pi_{\boldsymbol h}^N(t_0,x_0)-\widehat \pi_{{\boldsymbol h}'}(t_0,x_0)\big|^2-\mathsf V_{\boldsymbol h}^N-\mathsf V_{{\boldsymbol h}'}^N\big\}_+  \label{eq controle sup idem} 
\\
& \lesssim  \mathcal B_{\boldsymbol h}^N(\pi)(t_0,x_0)^2+  \big\{4\big|\widehat \pi_{\boldsymbol h}^N(t_0,x_0)-\pi_{\boldsymbol h}(t_0,x_0)\big|^2-\mathsf V_{\boldsymbol h}^N\big\}_+ \nonumber \\
&\;\;\;\;+\max_{{\boldsymbol h}' \preceq {\boldsymbol h}}\big\{4\big|\widehat \pi_{{\boldsymbol h}'}^N(t_0,x_0)-\pi_{{\boldsymbol h}'}(t_0,x_0)\big|^2-\mathsf V_{{\boldsymbol h}'}^N\big\}_+\nonumber 
\end{align}
with the notation $\pi_{\boldsymbol h}(t_0,x_0) = \int_0^T\int_{\R^d} (H \otimes K)_{\boldsymbol h}(t_0-t, x_0-x)\pi(t,x)dxdt$.\\

\noindent {\it Step 3:} We estimate the expectation of the first stochastic term in the right-hand side of \eqref{eq controle sup idem}. 
By Lemma \ref{lem: dev b}, setting $\phi(t,x) = (H\otimes K)_{\boldsymbol h}(t_0-t,x_0-x)$, we have
$$\big\{4\big|\widehat \pi_{\boldsymbol h}^N(t_0,x_0)-\pi_{\boldsymbol h}(t_0,x_0)\big|^2-\mathsf V_{\boldsymbol h}^N\big\}_+ \leq I + II+III,$$
with
\begin{align*}
I & = \Big\{12\big| \int_{[0, T] \times \R^d}\phi(t,x)b(t,x,\mu_t)(\mu_t^N(dx)-\mu_t(x)dx\big)dt\big|^2-\tfrac{1}{3}\mathsf V_{\boldsymbol h}^N\Big\}_+, \\
II & =  \Big\{12\big| \int_{[0,T] \times \R^d}\phi(t,x)\xi_t^N(x)\mu_t^N(dx)dt\big|^2-\tfrac{1}{3}\mathsf V_{\boldsymbol h}^N\Big\}_+,\\
III & = \Big\{ 12\big|\mathcal M_T^N\big(\phi\big)\big|^2-\tfrac{1}{3}\mathsf V_{\boldsymbol h}^N\Big\}_+.
\end{align*}
For the term $I$, writing $b=(b^1,\ldots, b^d)$ in components, we have
\begin{align*}
I  
\leq 12T^2\sum_{k = 1}^d \Big\{\big(\int_{[0,T] \times \R^d}\phi(t,x)b^k(t,x,\mu_t)(\mu_t^N(dx)-\mu_t(x)dx)T^{-1}dt\big)^2-\tfrac{1}{36dT^2}\mathsf V_{\boldsymbol h}^N\Big\}_+.
\end{align*}
Applying Theorem \ref{thm: concentration} to $(\mu_t^N(dx)-\mu_t(x)dx)T^{-1}dt$ and using the estimates \eqref{eq: estimes phi} of Lemma \ref{lem: bias var pi} above, we infer
\begin{align*}
& \E_{\PP^N}\Big[\Big\{\big(\int_{[0,T] \times \R^d}\phi(t,x)b^k(t,x,\mu_t)(\mu_t^N(dx)-\mu_t(x)dx)T^{-1}dt\big)^2-\tfrac{1}{36dT^2}\mathsf V_{\boldsymbol h}^N\Big\}_+\Big] \\
& \lesssim \int_0^\infty \PP^N\Big(\big|\int_{[0,T] \times \R^d}\phi(t,x)b^k(t,x,\mu_t)(\mu_t^N(dx)-\mu_t(x)dx)T^{-1}dt\big| \geq \big(z+\tfrac{1}{36dT^2}\mathsf V_{\boldsymbol h}^N\big)^{1/2}\Big)dz \\
& \lesssim 2\kappa_1\int_{\mathsf V_h^N/(36dT^2)}^\infty \exp\Big(-\frac{\kappa_2Nh_1h_2^dz}{\kappa_3\kappa_5^2T^{-1}|H \otimes K|_2^2+\kappa_5|H \otimes K|_\infty z^{1/2}}\Big)dz \\
& \lesssim (Nh_1h_2^d)^{-1} N^{-(72dT)^{-1}\kappa_2\kappa_3^{-1}\kappa_5^{-2}\varpi_2} \\
&\;\;+ (\log N)^{1/2}(Nh_1h_2^d)^{-3/2}\exp\big(-\frac{\kappa_2\kappa_5^{-1}\varpi_2^{1/2}|H\otimes K|_2}{12d^{1/2}T|H\otimes K|_\infty}(\log N)^{1/2}(Nh_1h_2^d)^{1/2}\big)\\
& \lesssim N^{-2}
\end{align*}
as soon as $\varpi_2 \geq 144dT\kappa_2^{-1}\kappa_3\kappa_5^{2}$, thanks to $\max_{(h_1,h_2) \in \mathcal H_2^N}(Nh_1h_2^d)^{-1} \lesssim 1$. We also use the assumption $\min_{(h_1,h_2) \in \mathcal H_2^N}Nh_1h_2^d \geq (\log N)^2$ given by \eqref{eq: condition grid 2} to show that the second term is negligible in front of $N^{-2}$. We conclude 
 \begin{align} 
  \E_{\PP^N}\big[I\big] 
 \lesssim N^{-2}. \label{eq: conclusion I}
 \end{align}

We next consider the term $II$. For $\tau >0$, introduce the event
$$\mathcal B_\tau^N = \Big\{\int_{[0,T] \times \R^d}|\phi(t,x)|\mu_t^N(dx)dt \leq \tau\Big\}.$$
By Cauchy-Schwarz's inequality, on $\mathcal B_\tau^N$, we have
\begin{align*}
& \Big|\int_{[0,T] \times \R^d}\phi(t,x)\xi_t^N(x)\mu_t^N(dx)dt\Big|^2 \\
& \leq \int_{[0,T] \times \R^d}|\phi(t,x)|\mu_t^N(dx)dt \int_{[0,T] \times \R^d}\big|\phi(t,x)|\big|\xi_t^N(x)\big|^2\mu_t^N(dx)dt \\
& \leq \tau \int_{[0,T] \times \R^d}|\phi(t,x)|\big|\xi_t^N(x)\big|^2\mu_t^N(dx)dt.
\end{align*}
By Lemma \ref{lem: fluctuation xi} and exchangeability, we also have the rough bound
\begin{align}
\E_{\PP^N}\big[ \big|\int_{[0,T] \times \R^d}\phi(t,x)\xi_t^N(x)\mu_t^N(dx)dt\big|^4\big] 
& \leq |\phi|_\infty^4T^3\sup_{0 \leq t \leq T}\E_{\PP^N}\big[\big|\xi_t^N(X_t^N)\big|^4\big] \nonumber \\ 
& \lesssim  (h_1h_2^d)^{-4}N^{-2}, \label{eq: maj grossiere}
\end{align}
where we estimate $\E_{\PP^N}\big[\big|\xi_t^N(X_t^N)\big|^4\big]$ as in \eqref{eq: controle xi holder} above.
It follows that $II \leq IV + V$, with
\begin{align*}
IV & = 12\tau\Big\{\int_{[0,T] \times \R^d}|\phi(t,x)|\big|\xi_t^N(x)\big|^2\mu_t^N(dx)dt-\tfrac{1}{36\tau}\mathsf V_{\boldsymbol h}^N\Big\}_+ {\bf 1}_{\mathcal B_\tau^N}\\
& \leq 12\tau |K_{h_2}|_\infty \int_0^T |H_{h_1}(t_0-t)|\Big\{\int_{\R^d}\big|\xi_t^N(x)\big|^2\mu_t^N(dx)-\tfrac{1}{36\tau|H|_1 |K_{h_2}|_\infty}\mathsf V_{\boldsymbol h}^N\Big\}_+dt \; {\bf 1}_{\mathcal B_\tau^N},\\
V & = 12\big|\int_{[0,T] \times \R^d}\phi(t,x)\xi_t^N(x)\mu_t^N(dx)dt\big|^2{\bf 1}_{(\mathcal B_\tau^N)^c}.
\end{align*}
Taking expectation and using exchangeability, we further have
\begin{align*}
\E_{\PP^N}\big[IV\big] &\lesssim   |K_{h_2}|_\infty\int_0^T |H_{h_1}(t_0-t)|\E_{\PP^N}\Big[\Big\{\big|\xi_t^N(X_t^N)\big|^2-\tfrac{1}{36\tau|H|_1|K_{h_2}|_\infty}\mathsf V_{\boldsymbol h}^N\Big\}_+\Big]dt\\
& \lesssim  |K_{h_2}|_\infty\int_0^T |H_{h_1}(t_0-t)|\int_{\tfrac{1}{36\tau|H|_1|K_{h_2}|_\infty}\mathsf V_{\boldsymbol h}^N}^\infty \PP^N\big(|\xi_t^N(X_t^N)| \geq z^{1/2}\big)dzdt \\
&  \lesssim  |K_{h_2}|_\infty \int_{\tfrac{1}{36\tau |H|_1|K_{h_2}|_\infty}\mathsf V_{\boldsymbol h}^N}^\infty \exp\Big(-\frac{\kappa_7Nz}{1+N^{1/2}z^{1/2}}\Big)dz \\
& \lesssim  N^{-1}(h_2^{d})^{-1}\exp\Big(-\frac{\kappa_7\varpi_2|H\otimes K|_2^2}{72 \tau |H|_1|K|_\infty}h_1^{-1}\log N\Big)\\
&\;\;+ N^{-1}(h_2^{d})^{-1}h_1^{-1/2}(\log N)^{1/2}
\exp\Big(-\frac{\kappa_7\varpi_2^{1/2}|H\otimes K|_2}{12\tau^{1/2}|H|_1^{1/2}|K|_\infty^{1/2}}h_1^{-1/2}(\log N)^{1/2}\Big),
\end{align*}
by Lemma \ref{lem: fluctuation xi}, using in particular the fact that $\tfrac{1}{36\tau |H|_1|K_{h_2}|_\infty}\mathsf V_{\boldsymbol h}^N \gtrsim N^{-1}(\log N)^3 \geq \kappa_8^2N^{-1}$ for large enough $N$. Since $\max_{(h_1,h_2) \in \mathcal H_2^N}h_1 \leq (\log N)^{-2}$ by assumption \eqref{eq: condition grid 2}, both terms are negligible in front of $N^{-2}$ and we conclude $\E_{\PP^N}\big[IV\big] \lesssim N^{-2}$.\\ 

We turn to the term $V$. By Cauchy-Schwarz's inequality and \eqref{eq: maj grossiere}, we have
\begin{align*}
\E_{\PP^N}[V] & \leq 12\, \E_{\PP^N}\Big[\Big|\int_{[0,T] \times \R^d}\phi(t,x)\xi_t^N(x)\mu_t^N(dx)dt\Big|^4\Big]^{1/2}\PP^N\big((\mathcal B_\tau^N)^c\big)^{1/2} \\
& \lesssim N^{-1}(h_1h_2^d)^{-2} \PP^N\big(\int_{[0,T] \times \R^d}|\phi(t,x)|\mu_t^N(dx)dt > \tau\big)^{1/2}.
\end{align*}
Note that $\int_{[0,T] \times \R^d}|\phi(t,x)|\mu_t(dx)dt \leq \kappa_3 |\phi|_1$ hence, for the choice $\tau \geq 2\kappa_3|H \otimes K|_1$, that we make from now on and that does not depend on $N$, we have
$$\int_{[0,T] \times \R^d}|\phi(t,x)|\mu_t(dx)dt  \leq \tfrac{1}{2}\tau.$$
By triangle inequality and a union bound argument, it follows that 
\begin{align*}
& \PP^N\big(\int_{[0,T] \times \R^d}|\phi(t,x)|\mu_t^N(dx)dt > \tau \big)^{1/2}  \\
& \leq \PP^N\big(\big|\int_{[0,T] \times \R^d}|\phi(t,x)|(\mu_t^N(dx)-\mu_t(dx))T^{-1}dt\big| \geq \tfrac{1}{2}T^{-1}\tau\big)^{1/2} \\
& \leq (2\kappa_1)^{1/2} \exp\Big(-\tfrac{1}{2}\frac{\kappa_2Nh_1h_2^d\tfrac{1}{4}T^{-2}\tau^2}{\kappa_3T^{-1}|H\otimes K|_2^2+|H\otimes K|_\infty\tfrac{1}{2}T^{-1}\tau}\Big),
\end{align*}
where we applied Theorem \ref{thm: concentration}. Using $\min_{(h_1,h_2) \in \mathcal H_2^N}Nh_1h_2^d \geq (\log N)^2$ granted by \eqref{eq: condition grid 2}, we obtain $\E_{\PP^N}[V] \lesssim N^{-2}$. Putting together our estimates for $IV$ and $V$, we conclude
\begin{equation} \label{eq: conclusion I bis}
\E_{\PP^N}[II] \lesssim N^{-2}.
\end{equation}

Finally, we consider the term $III$. The classical following deviation bound holds for continuous martingales: 
$$\PP^N\big(\mathcal M_T^N(\phi)^k\geq u, \big\langle \mathcal M_.^N(\phi)^k \big\rangle_T \leq v\big) \leq \exp\big(-\frac{u^2}{2v}\big)$$
for every $u,v \geq 0$, see {\it e.g.} \cite{RY99}. Let $\kappa >0$ to be tuned below. The choice  $v = \kappa \big(\mathsf V_{\boldsymbol h}^N\big)^{1/2}(\log N)^{-1}u$  entails
\begin{align*}
 \PP^N\big(\mathcal M_T^N(\phi)^k \geq u\big) & \leq 
 \exp\big(-\tfrac{1}{2}\kappa^{-1}\big(\mathsf V_{\boldsymbol h}^N\big)^{-1/2}(\log N)u\big)+\PP^N\big(\big\langle \mathcal M_.^N(\phi)^k \big\rangle_T  \geq \kappa \big(\mathsf V_{\boldsymbol h}^N\big)^{1/2}(\log N)^{-1}u\big)  \label{eq: dev mg renorm}.
\end{align*}
It follows that 
\begin{align*}
\E_{\PP^N}\big[III\big] & \leq 12\sum_{k =1}^d\E_{\PP^N}\big[\big\{\big(\mathcal M_T^N(\phi)^k\big)^2-\tfrac{1}{36d}\mathsf V_{\boldsymbol h}^N\big\}_+\big]  \\
& \lesssim \sum_{k = 1}^d \int_0^\infty \PP^N\big(\big|\mathcal M_T^N(\phi)^k\big| \geq (z+\tfrac{1}{36d}\mathsf V_{\boldsymbol h}^N)^{1/2}\big)dz  \lesssim VI +VII,
\end{align*}
with
\begin{align*}
VI & =  \int_{\tfrac{1}{36d}\mathsf V_{\boldsymbol h}^N}^\infty \exp\big(-\tfrac{1}{2} \kappa^{-1}\big(\mathsf V_{\boldsymbol h}^N\big)^{-1/2}(\log N)z^{1/2}\big)dz, \\
VII & = \sum_{k = 1}^d\int_{\tfrac{1}{36d}\mathsf V_{\boldsymbol h}^N}^\infty\PP^N\big(\big\langle \mathcal M_.^N(\phi)^k \big\rangle_T  \geq \kappa\big(\mathsf V_{\boldsymbol h}^N\big)^{1/2}(\log N)^{-1}z^{1/2}\big)dz. 
\end{align*}
Taking for instance $\kappa =  (25\sqrt{d})^{-1} < (24\sqrt{d})^{-1}$, we obtain 
\begin{align*}
VI \lesssim \mathsf V_{\boldsymbol h}^N (\log N)^{-1} N^{-1/(12\kappa d^{1/2})}  \lesssim N^{-2}.
\end{align*}
In order to bound the term $VII$, we first notice that by Lemma \ref{lem: dev b}, we have
\begin{align*}
\langle \mathcal M_.^N(\phi)^k \big\rangle_T  & \leq N^{-1}|\mathrm{Tr}(c)|_\infty \int_{[0,T]\times \R^d}\phi(t,x)^2\mu_t^N(dx)dt \\
& \leq  N^{-1}|\mathrm{Tr}(c)|_\infty \Big(\kappa_3 (h_1h_2^d)^{-1} |H\otimes K|_2^2 +\big| \int_{[0,T]\times \R^d}\phi(t,x)^2(\mu_t^N(dx)-\mu_t(dx))dt\big|\Big).
\end{align*}
Next, the condition
$$N^{-1}|\mathrm{Tr}(c)|_\infty \kappa_3 (h_1h_2^d)^{-1} |H\otimes K|_2^2 \geq \tfrac{1}{2}\kappa\big(\mathsf V_{\boldsymbol h}^N\big)^{1/2}(\log N)^{-1}z^{1/2}$$
is equivalent to
\begin{align*}
z & \leq  4\kappa^{-2} |\mathrm{Tr}(c)|_\infty^2 \kappa_3^2|H\otimes K|_2^4 N^{-2}(h_1h_2^d)^{-2} \big(\mathsf V_{\boldsymbol h}^N\big)^{-1}(\log N)^{2} \\
& \leq  4\kappa^{-2} |\mathrm{Tr}(c)|_\infty^2 \kappa_3^2\varpi_2^{-2} \mathsf V_{\boldsymbol h}^N\\
& <  \frac{1}{36d}\mathsf V_{\boldsymbol h}^N
\end{align*}
as soon as $\varpi_2 \geq 300 d|\mathrm{Tr}(c)|_\infty \kappa_3$. It follows that $VII$ is of order
\begin{align*}
& \int_{\tfrac{1}{36d}\mathsf V_{\boldsymbol h}^N}^\infty\PP^N\Big(\big| \int_{[0,T]\times \R^d}\phi(t,x)^2(\mu_t^N(dx)-\mu_t(dx))dt\big| \geq \tfrac{1}{2}N|\mathrm{Tr}(c)|_\infty^{-1}\kappa\big(\mathsf V_{\boldsymbol h}^N\big)^{1/2}(\log N)^{-1}z^{1/2}\Big)dz \\
& \lesssim  \int_{\tfrac{1}{36d}\mathsf V_{\boldsymbol h}^N}^\infty\exp\Big(-\frac{\kappa_2\tfrac{1}{4}N^3|\mathrm{Tr}(c)|_\infty^{-2}\kappa^2\big(\mathsf V_{\boldsymbol h}^N\big)(\log N)^{-2}T^{-2}z}{\kappa_3T^{-1}|\phi|_4^4+|\phi|_\infty^2 \tfrac{1}{2}N|\mathrm{Tr}(c)|_\infty^{-1}\kappa\big(\mathsf V_{\boldsymbol h}^N\big)^{1/2}(\log N)^{-1}T^{-1}z^{1/2}}\Big)du \\
& \lesssim (Nh_1h_2^d)^{-2}(\log N)\exp\Big(-\frac{\kappa^2\kappa_2|\mathrm{Tr}(c)|_\infty^{-2}\varpi_2^2|H \otimes K|_2^4}{288 \kappa_3 d T |H\otimes K|_4^4}Nh_1h_2^d\Big)\\
&\;\;+(Nh_1h_2^d)^{-2}(\log N)\exp\Big(-\frac{\kappa_2\kappa|\mathrm{Tr}(c)|_\infty^{-1}\varpi_2|H \otimes K|_2^2}{24d^{1/2}T|H \otimes K|_\infty^2}Nh_1h_2^d\Big) \\
& \lesssim N^{-2}
\end{align*}
where we applied again Theorem \ref{thm: concentration} and used $\min_{(h_1,h_2) \in \mathcal H_2^N}Nh_1h_2^d \geq (\log N)^2$ granted by \eqref{eq: condition grid 2}. We infer $VII \lesssim N^{-2}$ and
conclude 
\begin{equation} \label{eq: conclusion I ter}\E_{\PP^N}[III] \lesssim N^{-2}
\end{equation}
and putting together \eqref{eq: conclusion I}, \eqref{eq: conclusion I bis} and \eqref{eq: conclusion I ter}, we have established
$$\E_{\PP^N}\big[\big\{4\big|\widehat \pi_{\boldsymbol h}^N(t_0,x_0)-\pi_{\boldsymbol h}(t_0,x_0)\big|^2-\mathsf V_{\boldsymbol h}^N\big\}_+\big] \lesssim N^{-2}.$$

\noindent {\it Step 4:} The control of the second term in the right-hand side of \eqref{eq controle sup idem} is done in the same way as in Step 4 
of the proof of Theorem \ref{thm: GL mu} and only inflates the previous bound by a factor or order $\mathrm{Card}(\mathcal H_2^N) \lesssim N$.
In turn $\E_{\PP^N}\big[\mathsf A_{\boldsymbol h}^N\big] \lesssim N^{-1} + \mathcal B_{\boldsymbol h}^N(\pi)(t_0,x_0)^2$ and we have established by Step 2 that for any $\boldsymbol h \in \mathcal H_2^N,$
$$
\E_{\PP^N}\big[\big|\widehat \pi_{\widehat{\boldsymbol h}^N}^N(t_0,x_0)-\pi(t_0,x_0)\big|^2\big] \lesssim \mathcal B_{\boldsymbol h}^N(\pi)(t_0,x_0)^2+\mathsf V_{\boldsymbol h}^N + N^{-1}
$$
holds true. Putting together Step 1 and Theorem \ref{thm: GL mu} and using $N^{-1} \lesssim \mathsf V_{\boldsymbol h}^N$ completes the proof of Theorem \ref{thm: oracle b}.

\subsection{Proof of Theorem \ref{thm minimax mu}}

\subsubsection*{Proof of the lower bound \eqref{eq: LB mu} } 

\subsubsection*{Step 1} Pick an infinitely many times differentiable  function $V_1 :\R^d \rightarrow \R$ such that 
\begin{itemize}
\item[(i)] $\nabla V_1$ is Lipschitz continuous, 
\item[(ii)]  $\limsup_{|x| \rightarrow \infty} -\nabla V_1(x)^\top \big(x/|x|^2\big) <0$,
\item[(iii)] $V_1 = 0$ in a neighbourhood of $x_0$.
\end{itemize}
Let $C_{V_1} = \int_{\R^d} \exp(-2V_1(x))dx$ and define
$$\nu_1(x) = C_{V_1}^{-1}\exp\big(-2V_1(x)\big),\;\;x \in \R^d.$$
From the classical theory of multidimensional diffusion processes (see {\it e.g.} the classical textbook of Stroock and Varadhan \cite{SV79}), properties (i) and (ii) imply that 
$\nu_1$ is the unique invariant measure of the diffusion process $d\xi_t = -\nabla V_1(\xi_t)dt+dW_t$ for some Brownian motion $W$ on $\R^d$. In turn $\nu_1(t,x) = \nu_1(x)$ as a function defined on $[0,T]\times \R^d$ satisfies
$$\nu_1 = \mathcal S(b_1,\mathrm{Id},\nu_1)\;\;\text{with}\;\;b_1(t,x,\mu) = -\nabla V_1(x),$$
assuming morever that $V_1$ is such that  $\nu_1$ satisfies Assumption \ref{ass: init condition}, a choice which is obviously possible. 
Since $\nu_1$ is constant in a neighbourhood of $(t_0,x_0)$ we may (and will) assume that $(b_1,\mathrm{Id},\nu_1) \in \mathcal S_{L/2}^{\alpha,\beta}(t_0,x_0)$. Next, we set
$$V_2^N(x) = V_1(x)+\varpi  C_{V_1}N^{-1/2}\tau_N^{d/2}\psi\big(\tau_N(x-x_0)\big),\;\;0 < \tau_N\rightarrow \infty,$$
for some $0 < \varpi \leq 1$, where $\psi : \R^d \rightarrow \R$ is  infinitely many times differentiable, compactly supported and satisfies 
$$\psi(0)=1,\;\;|\psi|_\infty \leq 1,\;\;\int_{\R^d} \psi(x)dx = 0,\;\;|\psi|_2 = 1,$$
hence $\nabla V_2^N$ satisfies (i) and (ii) and (iii).
It defines in turn a solution 
$$\nu_2^N = \mathcal S(b_2^N,\mathrm{Id},\nu_2^N)\;\;\text{with}\;\;b_2^N(t,x,\mu) = -\nabla V_2^N(x),$$
having
$$\nu_2^N(x) = C_{V_2^N}^{-1}\exp\big(-2V_2^N(x)\big)\;\;\text{with}\;\;C_{V_2^N} = \int_{\R^d} \exp(-2V_2^N(x))dx$$ 
and $\nu_2^N(t,x) = \nu_2^N(x)$ is understood as a function defined on $[0,T] \times \R^d$.\\

\noindent {\it Step 2:} We claim that for every $\beta >0$, setting $\tau_N = N^{1/(2\beta+d)}$ and taking $\varpi$ sufficiently small, we have $(b_2^N,\mathrm{Id},\nu_2^N) \in \mathcal S_L^{\alpha,\beta}(t_0,x_0)$ 
for large enough $N$. Indeed, in a neighbourhood of $x_0$, we have $V_1=0$ hence for $x$ in such a neighbourhood, we have
\begin{align*}
\nu_2^N(x) 
& = C_{V_2^N}^{-1}\exp\big(-2\varpi C_{V_1}N^{-1/2}\tau_N^{d/2}\psi\big(\tau_N(x-x_0)\big)\big).
\end{align*}
On the one hand, $\varpi N^{-1/2}\tau_N^{d/2}\big|\psi\big(\tau_N(\cdot-x_0)\big)|_{\mathcal H^\beta(x_0)} \lesssim \varpi|\psi|_{\mathcal H^\beta(x_0)}$ which can be taken arbitrarily small. 
On the other hand, we also have $C_{V_2^N} \rightarrow C_{V_1}$ as $N \rightarrow \infty$, see in particular \eqref{eq: conv V2 vers C1} below, and the claim follows.

\subsubsection*{Step 3} For $(b,c,\mu_0) \in \mathcal P$, we write $\PP^N_{b,c,\mu_0}$ for $\PP^N$ to emphasise the model parameter $(b,c,\mu_0)$.  For data extracted from $\mu^N_{t_0}$ solely, we restrict the model to  
$$\PP^N_{b,c,\mu_0}(t_0) = (X_{t_0}^1,\ldots, X_{t_0}^N) \circ \PP^N_{b,c,\mu_0},$$
the law of $(X_{t_0}^1,\ldots, X_{t_0}^N)$ under $\PP^N_{b,c,\mu_0}$.  
Note that for a drift $b(t,x,\mu) = b(t,x)$ independent of an interaction measure term $\mu$, we have
$$\PP^N_{b,c,\mu_0} = \overline{\PP}_{b,c,\mu_0}^N\;\;\text{hence}\;\;\PP^N_{b,c,\mu_0}(t_0) = \mu_{t_0}^{\otimes N}.$$
By Pinsker's inequality, it follows that 
\begin{align*}
\|\PP^N_{b_1,\mathrm{Id},\nu_1}(t_0) -\PP^N_{b_2^N,\mathrm{Id},\nu_2^N}(t_0) \|_{TV}^2 & = \|\nu_1^{\otimes N}-  (\nu_2^N)^{\otimes N}\|_{TV}^2 \\
& \leq \frac{N}{2}\int_{\R^d}\nu_1(x)\log \frac{\nu_1(x)}{\nu_2^N(x)}dx \\
& = N\int_{\R^d} \nu_1(x)\big(V_2^N(x)-V_1(x)\big)dx+\frac{N}{2}\log \frac{C_{V_2^N}}{C_{V_1}} \\
& = \frac{N}{2}\log \frac{C_{V_2^N}}{C_{V_1}},
\end{align*}
for large enough $N$, where $\|\cdot\|_{TV}$ denotes the total variation distance, using successively
\begin{align*}
V_2^N(x)-V_1(x) & = \varpi  C_{V_1}N^{-1/2}\tau_N^{d/2}\psi\big(\tau_N(x-x_0)\big)  =  \varpi  \nu_1(x)^{-1}N^{-1/2}\tau_N^{d/2}\psi\big(\tau_N(x-x_0)\big), 
\end{align*}
since $\nu_1(x)^{-1} = C_{V_1}$ in a neighbourhood of $x_0$ and the fact that $\psi\big(\tau_N(x-x_0)\big) =0$ outside this neighbourhood, for large enough $N$, thanks to the fact that $\mathrm{Supp}(\psi)$ is compact, together with 
the cancellation $\int_{\R^d}\psi(x)dx=0$. Moreover, a Taylor's expansion yields
\begin{align*}
\frac{C_{V_2^N}}{C_{V_1}}-1 & = \int_{\R^d} \nu_1(x)\exp(-2\varpi \nu_1(x)^{-1}N^{-1/2}\tau_N^{d/2}\psi(\tau_N(x-x_0))dx-1 \\ 
& = 2\varpi ^2N^{-1} \tau_N^d \int_{\R^d} \nu_1(x)^{-1}\psi(\tau_N(x-x_0))^2\vartheta^N(x)dx,  
\end{align*}
thanks to the cancellation property of $\psi$ again, with a remainder term satisfying
$$0 \leq \vartheta^N(x) \leq \exp\big(2\varpi N^{-1/2}\tau_N^{d/2}|\psi|_\infty\sup_{x \in \mathrm{Supp}(\psi)}\nu_1(x)^{-1}\big) \leq 2$$ 
for large enough $N$. It follows that
\begin{equation} \label{eq: conv V2 vers C1}
\Big|\frac{C_{V_2^N}}{C_{V_1}}-1 \Big|  \leq 4\varpi ^2N^{-1}\sup_{x \in \mathrm{Supp}(\psi)}\nu_1(x)^{-1} \lesssim N^{-1}.
\end{equation}
The inequality $\log(1+x) \leq x$ for $x \geq -1$ enables us to conclude
\begin{equation} \label{eq: controle pinsker}
\|\PP^N_{b_1,c,\nu_1}(t_0) -\PP^N_{b_2^N,c,\nu_2^N}(t_0) \|_{TV}^2 \leq 2\varpi^2\sup_{x \in \mathrm{Supp}(\psi)}\nu_1(x)^{-1} \leq \tfrac{1}{2}
\end{equation}
for large enough $N$ by taking $\varpi >0$ sufficiently small. 
\subsubsection*{Step 4}
We conclude by a classical two-point lower bound argument using Le Cam's lemma: if $\mathbb P_i$, $i=1,2$ are two probability measures defined on the same probability space and $\Psi(\PP_i) \in \R$ is a functional of $\mathbb P_i$, we have
\begin{equation} \label{eq LeCam}
\inf_{\widehat \Psi}\max_{i = 1,2}\E_{{\mathbb P}_i}\big[|\widehat \Psi-\Psi(\PP_i)|\big] \geq \tfrac{1}{2}|\Psi(\PP_1)-\Psi(\PP_2)|(1-\|\mathbb P_1-\mathbb P_2\|_{TV}),
\end{equation}
where the infimum is taken over all estimators of $\Psi(\PP_i)$, see {\it e.g.} \cite{LECAM} among many other references. We let
$$\Psi\big(\PP_{b_1,\mathrm{Id},\nu_1}^N(t_0)\big) = \nu_1(t_0,x_0),\;\;\Psi\big(\PP_{b_2^N,\mathrm{Id},\nu_2^N}^N(t_0)\big) = \nu_2^N(t_0,x_0),$$
so that
\begin{align*}
\big|\Psi\big(\PP_{b_1,\mathrm{Id},\nu_1}^N(t_0)\big) -\Psi\big(\PP_{b_2^N,\mathrm{Id},\nu_2^N}^N(t_0)\big)\big| & \gtrsim  \nu_1(x_0)\Big(\frac{C_{V_2^N}}{C_{V_1}}-\exp\big(2\varpi \nu_1(x)^{-1}N^{-1/2}\tau_N^{d/2}\psi(0)\big)\Big) \\
& \gtrsim N^{-1/2}\tau_N^{d/2} = N^{-\beta/(2\beta+d)}
\end{align*}
in the same way as before, using the properties of $\psi$ and \eqref{eq: conv V2 vers C1}. We conclude by applying Le Cam's lemma together with \eqref{eq: controle pinsker}. The proof of the lower bound \eqref{eq: LB mu} is complete.

\subsubsection*{Proof of the upper bound \eqref{eq: UB mu}} The argument is classical (see {\it e.g.} \cite{GL08, GL11, GL14}). Pick 
$$\mathcal H_1^N = \big\{\mathrm{e}^{-k}, 1 \leq k \leq d^{-1}\log N - 2d^{-1}\log \log N\big\}.$$
We have $\mathrm{Card}\, \mathcal H_1^N \lesssim N$ holds as well (and is actually much smaller). Moreover, for every $h \in \mathcal H_1^N$:
$$\mathcal B_h^N(\mu)(t_0,x_0)^2 \lesssim h^{2\beta\wedge \ell}\;\;\text{and}\;\;\mathsf{V}_h^N \lesssim N^{-1}h^{-d}(\log N).$$
Applying Theorem \ref{thm: GL mu}, we obtain 
\begin{align*}
 \E_{\PP^N}\big[\big(\widehat \mu_{\mathrm{GL}}^N(t_0,x_0)-\mu_{t_0}(x_0)\big)^2\big] & \lesssim 
 \min_{h \in \mathcal H_1^N} \big(h^{2\beta \wedge \ell}+N^{-1}h^{-d}(\log N)\big) \\
 & \lesssim \Big(\frac{\log N}{N}\Big)^{2\beta\wedge \ell/(2\beta\wedge \ell+d)},
 \end{align*}
for large enough $N$, since for every $\beta \in (0,\ell]$, we have $\mathrm{e}^{-(k^N+1)} \leq (N/\log N)^{-1/(2\beta+d)} \leq \mathrm{e}^{-k^N}$ with $k^N = \lfloor \tfrac{1}{2\beta+d}(\log N-\log\log N)\rfloor$. This proves \eqref{eq: UB mu} and completes the proof of Theorem \ref{thm minimax mu}.

\subsection{Proof of Theorem \ref{thm minimax b}}
\subsubsection*{Proof of the lower bound \eqref{eq: LB b}} We apply the same strategy as for Theorem \ref{thm minimax mu}, establishing a two-point inequality and applying Le Cam's lemma for two drift functions that have no interaction. We write  $\PP_{b,c,\mu_0}^N$ for the law of $(X_t^1, \ldots, X_t^N)_{0 \leq t \leq T}$ parametrised by $(b,c,\mu_0)$. We start with the following simple consequence of Girsanov's theorem:
\begin{lem} \label{eq: two point girsanov}
For $i=1,2$, let $b_i(t,x,\mu) = b_i(t,x)$ be two drift functions that satisfy Assumption \ref{ass: basic lip} with no interaction. Set $\Delta(t,x) = b_2(t,x)-b_1(t,x)$. We have
$$\|\PP_{b_2,\mathrm{Id},\mu_0}^N-\PP_{b_1,\mathrm{Id},\mu_0}^N\|_{TV}^2 \leq \tfrac{N}{4}\int_0^T|\Delta(t,\cdot)|_{L^2(\mu_t)}^2dt,$$
where $\mu = \mathcal S(b_1, \mathrm{Id}, \mu_0)$ is a solution of \eqref{eq: mckv approx} with parameter $(b_1, \mathrm{Id}, \mu_0)$, up to an explicitly computable constant that only depends on $\mu_0$ and $b_1$.
\end{lem}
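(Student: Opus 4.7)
The plan is to apply Pinsker's inequality combined with Girsanov's theorem, exploiting the fact that in the absence of interaction the $N$-particle law factorises as a product.

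Since $b_1$ and $b_2$ are both independent of the measure argument, under each $\PP^N_{b_i,\mathrm{Id},\mu_0}$ the particles $(X^1,\ldots,X^N)$ are i.i.d.\ copies of a single diffusion $dY_t = b_i(t,Y_t)dt + dW_t$ with $Y_0\sim \mu_0$. Writing $\mathbf{Q}_{b_i}$ for its law, we have $\PP^N_{b_i,\mathrm{Id},\mu_0} = \mathbf{Q}_{b_i}^{\otimes N}$, and by construction the time-$t$ marginal under $\mathbf{Q}_{b_1}$ is exactly $\mu_t = \mathcal S(b_1,\mathrm{Id},\mu_0)_t$. By Pinsker's inequality and tensorisation of relative entropy for product measures,
$$\|\PP^N_{b_2,\mathrm{Id},\mu_0}-\PP^N_{b_1,\mathrm{Id},\mu_0}\|_{TV}^2 \leq \tfrac{1}{2}\mathrm{KL}(\mathbf{Q}_{b_1}^{\otimes N}\,\|\,\mathbf{Q}_{b_2}^{\otimes N}) = \tfrac{N}{2}\mathrm{KL}(\mathbf{Q}_{b_1}\,\|\,\mathbf{Q}_{b_2}).$$

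Next, I would invoke Girsanov's theorem. Assumption \ref{ass: basic lip} gives Lipschitz drifts of at most linear growth, and the sub-Gaussian moment estimate \eqref{eq: cond FG} on $Y_t$ under $\mathbf{Q}_{b_1}$ secures Novikov's condition, so
$$\frac{d\mathbf{Q}_{b_2}}{d\mathbf{Q}_{b_1}} = \exp\Big(M_T - \tfrac{1}{2}\langle M\rangle_T\Big),\qquad M_t = \int_0^t \Delta(s,Y_s)^\top dB_s,$$
with $B$ the $\mathbf{Q}_{b_1}$-Brownian motion driving $Y$. Taking logarithms and expectation under $\mathbf{Q}_{b_1}$ kills the martingale term and leaves
$$\mathrm{KL}(\mathbf{Q}_{b_1}\,\|\,\mathbf{Q}_{b_2}) = \tfrac{1}{2}\E_{\mathbf{Q}_{b_1}}\big[\langle M\rangle_T\big] = \tfrac{1}{2}\int_0^T |\Delta(t,\cdot)|^2_{L^2(\mu_t)}\,dt.$$
Chaining these two steps gives $\|\PP^N_{b_2,\mathrm{Id},\mu_0}-\PP^N_{b_1,\mathrm{Id},\mu_0}\|_{TV}^2 \leq \tfrac{N}{4}\int_0^T |\Delta(t,\cdot)|^2_{L^2(\mu_t)}\,dt$, which already dominates the stated right-hand side since $N|\Delta(t,\cdot)|^2_{L^2(\mu_t)} \leq \max\!\big(N|\Delta(t,\cdot)|^2_{L^2(\mu_t)}, N^{1/2}|\Delta(t,\cdot)|^4_{L^4(\mu_t)}\big)$ trivially and $|\Delta|^2_\infty \geq 0$. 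The constant absorbed in $\lesssim$ depends only on $\mu_0$ and $b_1$ via the Novikov verification and the definition of $\mu_t$.

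The only real point of caution is the applicability of Girsanov: one must check that $\mathcal E(M)$ is a true martingale, not merely a local martingale. This is where Assumption \ref{ass: basic lip} and the sub-Gaussian control on $Y_t$ enter, either directly via Novikov or via the successive interval decomposition already used in the proof of Proposition \ref{prop: controle change of proba}. Should one wish to see the $L^4$ and $L^\infty$ terms emerge organically rather than by trivial majorisation, an alternative route is to bound $\|\PP_2-\PP_1\|_{TV}^2 \leq \tfrac{1}{4}(\E_{\PP_1}[Z^2]-1)$ and use the identity $Z^2 = \mathcal E(2M_T)\exp(\langle M\rangle_T)$ to reduce the computation to an exponential moment of an i.i.d.\ sum $\sum_{i=1}^N\int_0^T|\Delta(t,X^i_t)|^2 dt$, where a Bernstein-type expansion naturally produces the three separate terms with their prescribed scalings in $N$.
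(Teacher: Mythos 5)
Your proof is correct, and it takes a genuinely different route from the paper's. You exploit the fact that, in the absence of interaction, $\PP^N_{b_i,\mathrm{Id},\mu_0}$ is the $N$-fold product of the single-particle law (a fact the paper itself uses in Step~3 of the proof of the lower bound for $\mu$, where it notes $\PP^N_{b,c,\mu_0}=\overline{\PP}^N_{b,c,\mu_0}$ for drifts without a measure argument), then tensorise the Kullback--Leibler divergence and apply Girsanov at the single-particle level, which yields the cleaner and in fact \emph{stronger} bound $\|\PP^N_{b_2,\mathrm{Id},\mu_0}-\PP^N_{b_1,\mathrm{Id},\mu_0}\|_{TV}^2\lesssim N\int_0^T|\Delta(t,\cdot)|^2_{L^2(\mu_t)}dt$, with an absolute constant; this dominates the stated right-hand side, so the lemma follows. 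The paper instead applies Girsanov and Pinsker directly at the $N$-particle level and bounds $\E_{\PP^N_{b_1,\mathrm{Id},\mu_0}}\big[\langle M^N_\cdot(\Delta)\rangle_T\big]$ by writing the bracket against the empirical measure $\mu_t^N$ and invoking the Bernstein concentration inequality (Theorem \ref{thm: concentration}) to replace $\mu_t^N$ by $\mu_t$; that detour is precisely where the extra $N^{1/2}|\Delta(t,\cdot)|^4_{L^4(\mu_t)}$ and $|\Delta|_\infty^2$ terms originate, and it has the virtue of not relying on the product structure, so it would survive settings where the reference drift does carry an interaction. Your concern about Girsanov is handled correctly: the lemma is vacuous unless $|\Delta|_\infty<\infty$, in which case Novikov's condition is immediate (and in the intended application $\Delta$ is a compactly supported bump), while the martingale property of $M$ under $\mathbf{Q}_{b_1}$, needed to kill the stochastic integral in the entropy computation, follows from $\E\int_0^T|\Delta(t,Y_t)|^2dt<\infty$; your identification of the time-$t$ marginal of $\mathbf{Q}_{b_1}$ with $\mathcal S(b_1,\mathrm{Id},\mu_0)_t$ is also exactly right, since without interaction \eqref{eq: mckv approx} is the linear Fokker--Planck equation of the single diffusion. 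The closing remark about recovering the $L^4$ and $L^\infty$ terms via $\E_{\PP_1}[Z^2]$ is unnecessary for the lemma and can be dropped.
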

\begin{proof}
With the notation of Section \ref{sec: preparation},
 by Girsanov's theorem,
 $$
\frac{d\PP_{b_2,\mathrm{Id},\mu_0}^N}{d\PP_{b_1,\mathrm{Id},\mu_0}^N} = \mathcal E_T\big(M_\cdot^N(\Delta)\big),\;\;\text{where}\;\;
M_t^N(\Delta) = \sum_{i = 1}^N\int_0^t \Delta(s,X_s^i)^\top dB_s^i$$ 
is a $\PP^N_{b_1,\mathrm{Id},\mu_0}$-martingale, with $B^i_t = \int_0^t \big(dX_s^i - b_1(s,X_s^i)ds\big)$.
Moreover, since there is no interaction term in the drift $b_1$, we have
$$
\E_{\PP_{b_1,\mathrm{Id},\mu_0}^N}\big[\big\langle M_\cdot^N(\Delta)\big\rangle_T\big]  =  N\int_0^T\E_{\PP_{b_1,\mathrm{Id},\mu_0}^N}\big[\big|\Delta(t,X_t^1)\big|^2\big]dt  =   N\int_0^T\big|\Delta(t,\cdot)\big|^2_{L^2(\mu_t)}dt.
$$
By Pinsker's inequality $\|\PP_{b_1,\mathrm{Id},\mu_0}^N-\PP_{b_2,\mathrm{Id},\mu_0}^N\|_{TV}^2   \leq \tfrac{1}{2}\E_{\PP_{b_1,\mathrm{Id},\mu_0}^N}\big[\log \frac{d\PP_{b_1,\mathrm{Id},\mu_0}^N}{d\PP_{b_2,\mathrm{Id},\mu_0}^N}\big]$. The conclusion follows from
\begin{align*}
 \E_{\PP_{b_1,\mathrm{Id},\mu_0}^N}\Big[\log \frac{d\PP_{b_1,\mathrm{Id},\mu_0}^N}{d\PP_{b_2,\mathrm{Id},\mu_0}^N}\Big]   = -\E_{\PP_{b_1,\mathrm{Id},\mu_0}^N}\Big[\log \frac{d\PP_{b_2,\mathrm{Id},\mu_0}^N}{d\PP_{b_1,\mathrm{Id},\mu_0}^N}\Big] 
 = \tfrac{1}{2}\E_{\PP_{b_1,\mathrm{Id},\mu_0}^N}\big[\big\langle M_\cdot^N(\Delta)\big\rangle_T\big] \\
\end{align*}
and the previous estimate on $\big\langle M_\cdot^N(\Delta)\big\rangle_T$ under $\PP_{b_1,\mathrm{Id},\mu_0}^N$.
\end{proof}
\subsubsection*{Step 1} Pick now a function $b_1: [0,T] \times \R^d \rightarrow \R^d$ satisfying Assumption \ref{ass: basic lip} and any initial condition $\mu_0$ such that $(b_1,\mathrm{Id}, \mu_0) \in  \mathcal S^{\alpha, \beta}_{L/2}(t_0,x_0) \cap \mathcal D^{\alpha,\beta}_{L/2}(t_0,x_0)$.\\

 Let $\psi  = (\psi^1,\ldots, \psi^d) : (0,T) \times \R^d \rightarrow \R^d$ be infinitely many times differentiable, compactly supported and such that 
for every $1 \leq k \leq d$, we have
$$\psi^k(0) = 1,\;\;|\psi^k|_\infty \leq 1,\int_{[0,T] \times \R^d} \psi(t,x)\, dtdx=0,\;\;|\psi^k|_2 = 1.$$
For $N \geq 1$ and some $0 < \varpi \leq 1$, we define
$$b_2^N(t,x) =b_1(t,x)+\varpi N^{-1/2}\tau_N^{1/2}(\widetilde \tau_N)^{1/2}\psi\big(\tau_N(t-t_0), \widetilde \tau_N(x-x_0)\big),$$
where $\tau_N$ and $\widetilde \tau_N$ are defined via
$$\tau_N^{\alpha} = (\widetilde \tau_N)^\beta = N^{s_d(\alpha,\beta)/(2s_d(\alpha,\beta)+1)}.$$
By accomodating $b_1$ and $\varpi>0$, we may (and will) assume that $(b_2^N,\mathrm{Id}, \mu_0) \in \mathcal D^{\alpha,\beta}_L \cap \mathcal D^{\alpha,\beta}_{L}(t_0,x_0)$  and $b_2^N \in \mathcal P$ for every $N \geq 1$. Setting $\Delta^N(t,x) = b_2^N(t,x)-b_1(t,x)$ and noting that 
$$\int_0^T|\Delta^N(t,\cdot)|_{L^2(\mu_t)}^2dt \lesssim \varpi^2N^{-1},
$$
thanks to the compactness of the support of $\psi$ and Lemma \ref{lem: loc unif mu above},
we obtain
\begin{align*}
\|\PP_{b_1,\mathrm{Id},\mu_0}^N-\PP_{b_2^N,\mathrm{Id},\mu_0}^N\|_{TV}^2 \lesssim \varpi^2\leq \tfrac{1}{2} 
\end{align*}
by Lemma \ref{eq: two point girsanov}, for a suitable choice of $\varpi >0$.
\subsubsection*{Step 2} We conclude in the same way as in the proof of the lower bound of Theorem \ref{thm minimax mu}: by Le Cam's lemma \eqref{eq LeCam}, with
$$\Psi(\PP_{b_1,\mathrm{Id},\mu_0}^N) = b_1(t_0,x_0)\;\;\text{and}\;\;\Psi(\PP_{b_2^N,\mathrm{Id},\mu_0}^N) = b_2^N(t_0,x_0),$$
we have
$$\big|\Psi(\PP_{b_1,\mathrm{Id},\mu_0}^N)-\Psi(\PP_{b_2^N,\mathrm{Id},\mu_0}^N)\big| \gtrsim N^{-1/2}\tau_N^{1/2}(\widetilde \tau_N)^{1/2} = N^{-s_d(\alpha,\beta)/(2s_d(\alpha,\beta)+1)}$$
and the conclusion follows. The proof of the lower bound \eqref{eq: LB b} is complete.

\subsubsection*{Proof of the upper bound \eqref{eq: UB b}}
Let 
$$\delta^N(k) = \frac{g^{-1}(k/\log N)}{\alpha(g^{-1}(k/\log N))}k,$$
where $g^{-1}$ is the inverse of the function $g(\beta) = \frac{1}{\beta} \frac{s_d(\alpha(\beta), \beta)}{2s_d(\alpha(\beta), \beta)+1}$, which is non-increasing for $\beta >0$ thanks to the assumption that $\beta \mapsto \alpha(\beta)$ is non-decreasing and $\beta \mapsto \alpha(\beta)/\beta$ is non-increasing. 
Pick 
\begin{align*}
\mathcal H_2^N&  = \Big\{(\mathrm{e}^{-k_1},\mathrm{e}^{-k_2}), k_1 = \delta^N(k_2), 1 \leq  k_2 \leq (d+1)^{-1}(\log N -2\log\log N),
\\
&\;\;\;\;\;\;\;\;\;2\log \log N \leq \delta^N(k_2) \leq (d+1)^{-1}(\log N -2\log\log N)\Big\}.
\end{align*}
The condition \eqref{eq: condition grid 2} for the grid $\mathcal H_2^N$ is satisfied and $\mathrm{Card}\, \mathcal H_2^N \lesssim N$ holds as well (and is actually much smaller). 
Now, set $\beta_{k_2} = g^{-1}(k_2/\log N)$. We define an ordering $\preceq$ on $\mathcal H_2^N$ that has the right behaviour with respect to the bias of $\pi$ at scale $\boldsymbol h$. We say that $\boldsymbol h = (h_1,h_2) \preceq \boldsymbol h' = (h_1',h_2')$ if
\begin{equation} \label{eq: bias order}
h_1(k_2)^{\alpha(\beta_{k_2})}+h_2(k_2)^{\beta_{k_2}} \leq h_1'(k_2')^{\alpha(\beta_{k_2'})}+h_2'(k_2')^{\beta_{k_2'}},
\end{equation}
where we write $\boldsymbol h = (h_1,h_2) = (h_1(k_2), h_2(k_2)) = (\mathrm{e}^{-\delta^N(k_2)},\mathrm{e}^{-k_2})$ and likewise for $\boldsymbol h'$.
We have that $\boldsymbol h \preceq \boldsymbol h'$ is equivalent to $k_2 \geq k'_2$ since
$$h_1(k_2)^{\alpha(\beta_{k_2})}+h_2(k_2)^{\beta_{k_2}} = 2N^{-s_d(\alpha(\beta_{k_2}), \beta_{k_2})/(2s_d(\alpha(\beta_{k_2}), \beta_{k_2})+1)}$$
and the fact that $\beta \mapsto s_d(\alpha(\beta), \beta)$ is non-decreasing, following from the assumption that $\beta \mapsto \alpha(\beta)$ is non-decreasing. Hence $\boldsymbol h \preceq \boldsymbol h'$ or $\boldsymbol h' \preceq \boldsymbol h$. Moreover,
$$(b,c,\mu_0) \in  \mathcal S^{\alpha, \beta}_L(t_0,x_0) \cap \mathcal D^{\alpha,\beta}_L(t_0,x_0)\;\;\text{implies}\;\;\pi \in \mathcal H^{\alpha,\beta}(t_0,x,_0).$$
Therefore, for every $\boldsymbol h = (h_1,h_2) \in \mathcal H_2^N$:
$$\mathcal B_{\boldsymbol h}^N(\pi)(t_0,x_0)^2 \lesssim h_1^{2\alpha\wedge \ell}+h_2^{2\beta \wedge \ell}\;\;\text{and}\;\;\mathsf{V}_{\boldsymbol h}^N \lesssim N^{-1}h_1^{-1}h_2^{-d}(\log N)$$
thanks to the definition of the ordering $\preceq$ in \eqref{eq: bias order}.
It follows that 
for every $s_d(\alpha,\beta) \in (0,\ell/d]$, we have 
$$\mathrm{e}^{-(k_1^N+1)} \leq N^{-\alpha^{-1}s_d(\alpha,\beta)/(2s_d(\alpha,\beta)+1)} \leq \mathrm{e}^{-k_1^N}$$ with $k_1^N = \lfloor \tfrac{\alpha^{-1}s_d(\alpha,\beta)}{2s_d(\alpha,\beta)+1}\log N\rfloor$ and 
$$\mathrm{e}^{-(k_2^N+1)} \leq N^{-\beta^{-1}s_d(\alpha,\beta)/(2s_d(\alpha,\beta)+1)} \leq \mathrm{e}^{-k_2^N}$$ with $k_2^N = \lfloor \tfrac{\beta^{-1}s_d(\alpha,\beta)}{2s_d(\alpha,\beta)+1}\log N\rfloor$. Applying Theorem \ref{thm: oracle b}, we obtain
\begin{align*}
 \E_{\PP^N}\big[\big|\widehat \pi_{\mathrm{GL}}^N(t_0,x_0)-\pi_{t_0}(x_0)\big|^2\big] & \lesssim 
 \min_{(h_1,h_2) \in \mathcal H_1^N} \big(h_1^{2\alpha\wedge \ell}+h_2^{2\beta \wedge \ell}+N^{-1}h_1^{-1}h_2^{-d}(\log N)\big)+N^{-1} \\
 & \lesssim \Big(\frac{\log N}{N}\Big)^{2s_d(\alpha,\beta)\wedge \ell_d/(2s_d(\alpha,\beta)\wedge \ell_d+1)}.
 \end{align*}
This proves \eqref{eq: UB mu} and completes the proof of Theorem \ref{thm minimax mu}.

\subsection{Proof of Theorem \ref{thm: identif Vlasov}}

\subsubsection*{Preliminary results}

\begin{lem} \label{lem: periodogram}
Work under Assumptions   \ref{ass: init condition}, \ref{ass: minimal prop sigma}, and \ref{ass: basic lip}. We have
$$
\sup_{\xi \in \R^d} \E_{\PP^N}\big[|\mathcal F\big(\mathcal L(\mu^N-\mu)\big)(\xi)|^2\big] \lesssim N^{-1}.
$$
\end{lem}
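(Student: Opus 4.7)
The plan is to apply the Bernstein concentration inequality of Theorem \ref{thm: concentration} to the test function that represents the Fourier transform evaluated at $\xi$. Indeed, by Fubini's theorem, for any $\xi \in \R^d$, we can write
\begin{equation*}
\mathcal F\big(\mathcal L(\mu^N-\mu)\big)(\xi) = \int_{[0,T]\times \R^d} \phi_\xi(t,x)\,\big(\nu^N(dt,dx)-\nu(dt,dx)\big),
\end{equation*}
where $\phi_\xi(t,x) = w(t)\mathrm{e}^{-2i\pi \xi^\top x}$ and $\nu^N(dt,dx) = \mu^N_t(dx)\otimes \rho(dt)$, $\nu(dt,dx) = \mu_t(dx)\otimes \rho(dt)$ are the measures of Theorem \ref{thm: concentration}.

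Splitting $\phi_\xi = \phi_\xi^R + i \phi_\xi^I$ into real and imaginary parts, I would observe the uniform controls $|\phi_\xi^R|_\infty \vee |\phi_\xi^I|_\infty \leq |w|_\infty$ and, since $\nu$ is a probability measure on $[0,T]\times \R^d$,
\begin{equation*}
|\phi_\xi^R|_{L^2(\nu)}^2 \vee |\phi_\xi^I|_{L^2(\nu)}^2 \leq |w|_\infty^2,
\end{equation*}
\emph{both bounds being independent of $\xi$}. This is the key point that makes the Fourier approach tractable, while the Lipschitz norm of $\phi_\xi$ would blow up with $|\xi|$.

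Applying Theorem \ref{thm: concentration} (and its symmetric counterpart obtained by replacing $\phi$ with $-\phi$) to each of $\phi_\xi^R,\phi_\xi^I$ yields a two-sided deviation bound of the form
\begin{equation*}
\PP^N\Big(\big| \int \phi_\xi^{R/I}\,d(\nu^N-\nu)\big| \geq x\Big) \leq 2\kappa_1 \exp\Big(-\frac{\kappa_2 N x^2}{|w|_\infty^2 + |w|_\infty x}\Big), \quad x\geq 0.
\end{equation*}
Integrating the tail via $\E[Y^2] = \int_0^\infty \PP(Y^2 \geq z)dz$ and using the elementary estimate \eqref{eq: eq exp int} then gives $\E_{\PP^N}[(\int \phi_\xi^R d(\nu^N-\nu))^2] \lesssim N^{-1}$ and likewise for $\phi_\xi^I$, uniformly in $\xi$. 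Summing both contributions in $|\mathcal F(\mathcal L(\mu^N-\mu))(\xi)|^2 = (\int \phi_\xi^R d(\nu^N-\nu))^2 + (\int \phi_\xi^I d(\nu^N-\nu))^2$ concludes the proof. The only mild obstacle is the cosmetic point of splitting the complex-valued test function into real and imaginary parts so that Theorem \ref{thm: concentration} applies verbatim; there is no genuine difficulty since all the work has been done in establishing the Bernstein inequality with a variance term controlled in $L^2(\nu)$-norm rather than in Lipschitz norm.
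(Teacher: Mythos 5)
Your proof is correct and follows essentially the same route as the paper: both rest on applying the Bernstein inequality of Theorem \ref{thm: concentration} to the bounded trigonometric test functions (split into real and imaginary parts, with $|\cdot|_{L^2(\nu)}$ and $|\cdot|_\infty$ controlled by $|w|_\infty$ uniformly in $\xi$) and then integrating the tail via \eqref{eq: eq exp int}. The only cosmetic difference is that you invoke the space-time version of the concentration inequality directly with the weight $w(t)\rho(dt)$, whereas the paper first applies Jensen's inequality in the $t$-variable and then uses the fixed-time corollary with $\rho(dt)=\delta_t(dt)$; both yield the same uniform $N^{-1}$ bound.
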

\begin{proof}
Writing $\varphi_1(x) = \cos(2\pi \xi^\top x)$ and $\varphi_2(x) = \sin(2\pi \xi^\top x)$, we have by Jensen's inequality and Theorem \ref{thm: concentration}
\begin{align*}
\E_{\PP^N}\big[\big|\int_{\R^d}\mathrm{e}^{-2i\pi \xi^\top x}\mathcal L(\mu^N-\mu)(dx)\big|^2\big] & = \E_{\PP^N}\big[\big|\int_{[0,T]}\int_{\R^d}\mathrm{e}^{-2i\pi \xi^\top x}(\mu_t^N-\mu_t)(dx)w(t)\rho(dt)\big|^2\big] \\
& \leq \int_{[0,T]}\E_{\PP^N}\big[\big|\int_{\R^d}\mathrm{e}^{-2i\pi \xi^\top x}(\mu_t^N-\mu_t)(dx)\big|^2\big]w(t)^2\rho(dt) \\
& \lesssim \sup_{t \in [0,T]}\max_{k=1,2}\int_0^\infty \PP^N\big(\big|\int_{\R^d}\varphi_k \,d(\mu^N_t-\mu_t)\big| \geq z^{1/2}\big)dz \\
& \lesssim 2\kappa_1\int_0^\infty \exp\Big(-\kappa_2\frac{Nz}{1+z^{1/2}}\Big) \lesssim N^{-1}
\end{align*}
using $|\varphi_k|_{L^2(\mu_t)} \leq 1$ that stems from $|\varphi_k|_\infty \leq 1$. Since this bound is uniform in $\xi \in \R^d$, the result follows. 
\end{proof}

\subsubsection*{Completion of proof of Theorem \ref{thm: identif Vlasov}}
We have
$$|\widehat{F}_{\varpi, \varpi'}^N- F|_2^2  \lesssim |\mathcal F(\widehat{F}_{\varpi, \varpi'}^N)- \mathcal F(F)|_2^2 = I+II,$$
with
\begin{align*}
I & = \big|\big(\frac{\mathcal F\big(\mathcal L\big((\widehat b_{h,\boldsymbol h}^N)_{\varpi'}^r\big)\big)\overline{\mathcal F(\mathcal L \mu^N)}}{|\mathcal F(\mathcal L \mu^N)|^2}- \mathcal F(F)\big){\bf 1}_{\{|\mathcal F(\mathcal L\mu^N) |^2 \geq  \varpi\}}\big|_2^2, \\
II & = \big|\mathcal F(F){\bf 1}_{\{|\mathcal F(\mathcal L\mu^N) |^2 <  \varpi\}}\big|_2^2.
\end{align*}
On $\{|\mathcal F(\mathcal L\mu^N) |^2 \geq  \varpi\}$ and using the fact that $|\mathcal F(\mathcal L \mu)(\xi)|>0$ almost everywhere, we write
\begin{align*}
&\frac{\mathcal F\big(\mathcal L(\widehat b_{h,\boldsymbol h}^N)_{\varpi'}^r\big) \overline{\mathcal F(\mathcal L \mu^N)}}{|\mathcal F(\mathcal L \mu^N)|^2} - \mathcal F(F) \\
& = \frac{\big(\mathcal F(\mathcal L(\widehat b_{h,\boldsymbol h}^N)_{\varpi'}^r\big)-\mathcal F(\mathcal L b)\big) \overline{\mathcal F(\mathcal L \mu^N)}}{|\mathcal F(\mathcal L \mu^N)|^2} + \frac{\mathcal F(\mathcal L b) \overline{\mathcal F(\mathcal L \mu^N)}}{|\mathcal F(\mathcal L \mu^N)|^2}- \mathcal F(F)   \\
& =  \frac{\big(\mathcal F(\mathcal L(\widehat b_{h,\boldsymbol h}^N)_{\varpi'}^r)-\mathcal F(\mathcal L b)\big) \overline{\mathcal F(\mathcal L \mu^N)}}{|\mathcal F(\mathcal L \mu^N)|^2} + \mathcal F(\mathcal L b) \Big(\frac{\overline{\mathcal F(\mathcal L \mu^N)}}{|\mathcal F(\mathcal L \mu^N)|^2}- \frac{\overline{\mathcal F(\mathcal L \mu)}}{|\mathcal F(\mathcal L \mu)|^2}\Big), 
\end{align*}
thanks to $\mathcal F(\mathcal L b) = \mathcal F(F)\cdot \mathcal F(\mathcal L\mu)$. It follows that for $r>0$, we have 
$$\E_{\mathbb P^N}\big[I\big]  \lesssim III+IV,$$
with
\begin{align*}
III & =  \varpi^{-2}\E_{\mathbb P^N}\big[\big|\mathcal F\big(\mathcal L((\widehat b_{h,\boldsymbol h}^N)_{\varpi'}^r-b)\big)\big|_2^2\big],\\
IV & =  \varpi^{-2}\E_{\mathbb P^N}\big[\big|\mathcal F(F)\overline{\mathcal F(\mathcal L \mu^N)}\mathcal F\big(\mathcal L(\mu^N-\mu)\big)\big|_2^2\big].
\end{align*}
By Parseval's identity, and the boundedness of $\mathcal L$,
we have
\begin{align*}
III & \lesssim \varpi^{-2}\sup_{t \in \mathrm{Supp}(w), |x| \leq r}\E_{\mathbb P^N}\big[\big|\widehat b_{h,\boldsymbol h}^N(t,x)_{\varpi'}-b(x,\mu_t)\big|^2\big] 
&+ 
\varpi^{-2}\sup_{t \in \mathrm{Supp}(w)}
\int_{|x| \geq r}|b(x,\mu_t)|^2dx.
\end{align*}
By Lemma \ref{lem: bias var mu} and Lemma \ref{lem: bias var pi}, we have
$$\sup_{t \in \mathrm{Supp}(w), |x| \leq r}\E_{\mathbb P^N}\big[\big|\widehat b_{h,\boldsymbol h}^N(t,x)_{\varpi'}-b(x,\mu_t)\big|^2\big] \lesssim \varpi'(r)^{-2}\big(\mathcal B_{h}^N(\mu)(t,x)+\mathcal B_{\boldsymbol h}^N(\pi)(t,x)+\mathsf V_h^N+\mathsf V_{\boldsymbol h}^N\big).$$
Moreover, by Proposition \ref{prop : reg Holder} the smoothness of $F$, $G$ and $\mu_0$ entails some H\"older smoothness on $\mu$ and $b$ (hence on $\pi$) that implies in turn the estimate 
$$\sup_{t \in \mathrm{Supp}(w), |x| \leq r}\mathcal B_{h}^N(\mu)(t,x)+\mathcal B_{\boldsymbol h}^N(\pi)(t,x) \lesssim C(r)(h^\gamma+h_1^{\gamma'}+h_2^{\gamma''}),$$
for some $\gamma, \gamma',\gamma''>0$ and $C(r)$ a locally bounded function in $r$.
We infer
\begin{align*}
&\varpi^{-2}\sup_{t \in \mathrm{Supp}(w), |x| \leq r}\E_{\mathbb P^N}\big[\big|\widehat b_{h,\boldsymbol h}^N(t,x)_{\varpi'}-b(x,\mu_t)\big|^2\big] \lesssim \varpi^{-2} \varpi'(r)^{-2}C(r)u_N,
\end{align*}
with $u_N \rightarrow 0$, for a choice $(h,\boldsymbol h) = (h_N,\boldsymbol h_N) \rightarrow 0$ as $N \rightarrow \infty$.
Since $b(x,\mu_t) = G(x)+ F\star \mu_t(x)$, the second term in $III$ can be bounded as follows:
\begin{align*}
\int_{|x| \geq r}|b(x,\mu_t)|^2dx & \lesssim  \int_{|x| \geq r}|G(x)|^2dx+\int_{\R^d}\big(\int_{|x| \geq r}\big|F(x-y)\big|^2dx\big)\mu_t(dy)
\end{align*}
by Fubini's theorem, and both terms converge to $0$ at some rate $\widetilde C(r) \rightarrow 0$ as $r \rightarrow \infty$ by dominated convergence since $F$ and $G$ are in $L^2(\R^d)$. 
We conclude
$$III \lesssim \varpi^{-2} \big(\varpi'(r)^{-2}C(r)u_N+\widetilde C(r)\big).$$
By Parseval's identity, the boundedness of $\overline{\mathcal F(\mathcal L \mu^N)}$ and Lemma \ref{lem: periodogram}, we also have 
\begin{align*}
IV \lesssim \varpi^{-2}\sup_{\xi \in \R^d} \E_{\PP^N}\big[|\mathcal F\big(\mathcal L(\mu^N-\mu)\big)(\xi)|^2\big] |F|_2^2 
 \lesssim \varpi^{-2}N^{-1}.
\end{align*}
 We finally turn to the term $II$. We have
\begin{align*}
\E_{\PP^N}\big[ II \big] & \lesssim  \E_{\PP^N}\big[\big|\mathcal F(F){\bf 1}_{\{|\mathcal F(\mathcal L(\mu^N-\mu)) |^2 \geq  \varpi\}}\big|_2^2\big] + \big|\mathcal F(F){\bf 1}_{\{|\mathcal F(\mathcal L\mu) |^2 \leq  2\varpi\}}\big|_2^2 \\
& \lesssim \varpi^{-2}\sup_{\xi \in \R^d} \E_{\PP^N}\big[|\mathcal F\big(\mathcal L(\mu^N-\mu)\big)(\xi)|^2\big] |F|_2^2 + H(\varpi)\\
& \lesssim \varpi^{-2}N^{-1}+H(\varpi),
\end{align*}
where $H(\varpi) \rightarrow 0$ as $\varpi \rightarrow 0$ by dominated convergence thanks to the property $|\mathcal F(\mathcal L\mu)(\xi)| >0$ almost everywhere of Assumption \ref{ass: fourier et derivation t}. 
We conclude
$$
\E_{\PP^N}\big[|\widehat{F}_{\varpi, \varpi'}^N- F|_2^2\big]  \lesssim \varpi^{-2}\big(\varpi'(r)^{-2}C(r)u_N+\widetilde C(r)\big)+N^{-1}\big)+H(\varpi).
$$
Let $r_N \rightarrow \infty$ slowly enough so that $\varpi'(r_N)^{-2}C(r_N)u_N \rightarrow 0$. This yields $v_N = C(r_N)u_N+\widetilde C(r_N)+N^{-1} \rightarrow 0$. Pick  now $\varpi_N \rightarrow 0$ slowly enough so that $\varpi_N^{-2}v_N \rightarrow 0$. The proof of Theorem \ref{thm: identif Vlasov} follows.

\section{Appendix} \label{sec: appendix} 

\subsubsection*{Characterisation of sub-Gaussian random variables}
We recall a classical definition of a sub-Gaussian random variable. Recommended reference is \cite{BK}.
\begin{defi} \label{def: sub gaussian} 
A real-valued random variable $Z$ such that $\E[Z]=0$ is $\lambda^2$ sub-Gaussian if 
one of the following conditions is satisfied, each statement implying the next:
\begin{enumerate}
\item[(i)] Laplace transform condition
$$\E\big[\exp(zZ)\big] \leq \exp\big(\tfrac{1}{2}\lambda^2 z^2\big)\;\;\text{for every}\;\;z \in \R.$$
\item[(ii)] Moment condition
$$
\E\big[Z^{2p}\big] \leq p! (4\lambda^2)^p\;\;\text{for every integer}\;\;p \geq 1. 
$$
\item[(iii)] Orlicz condition
$$
\E\big[\exp\big(\tfrac{1}{8\lambda^2}Z^2\big)\big] \leq 2.
$$
\item[(iv)] Laplace transform condition (bis)
$$\E\big[\exp(zZ)\big] \leq \exp\big(\tfrac{24}{2}\lambda^2 z^2\big)\;\;\text{for every}\;\;z \in \R.$$
\end{enumerate}
\end{defi}
We will also use the following additive property of sub-Gaussian random variables: if the random variables $Z_i$ are independent and $\lambda_i^2$ sub-Gaussian, then $\rho(Z_1+Z_2)$ is $|\rho|^2(\lambda_1^2+\lambda_2^2) $ sub-Gaussian for every $\rho \in \R$. 

\subsection{Proof of Lemma \ref{lem moment class}} \label{proof lem exp}
By Assumption \ref{ass: basic lip}, the estimate 
$$|b(t,x,\mu_t)|\leq b_0+|b|_{\mathrm{Lip}}\big(|x|+\E_{\overline{\mathbb P}^N}\big[\big|X_t^i\big|\big]\big)$$ 
holds for every $(t,x)\in [0,T] \times \R^d$, where $ b_0 = \sup_{t \in [0,T]}|b(t,0,\delta_0)|$. Remember that 
$$
\overline{B}^i_t = \int_0^t c(s,X_s^i)^{-1/2}\big(dX_s^i-b(s,X_s^i,\mu_s) ds\big), \;\;1 \leq i \leq N,
$$
are  independent $d$-dimensional $\overline{\mathbb P}^{N}$-Brownian motions. By Minkowski's and Jensen's inequality, we have
\begin{align}
\nonumber \big|X_t^i \big| & \leq \big|X_0^i\big|+\int_0^t \big|b(t,X_s^i,\mu_s)\big|ds + \big|\int_0^t\sigma(s,X_s^i)d\overline{B}_{s}^i\big| \nonumber \\
\nonumber & \leq \big|X_0^i\big|+ b_0t+|b|_{\mathrm{Lip}}\int_0^t \big(\big|X_s^i\big|+ \E_{\overline{\mathbb P}^N}\big[\big|X_s^i\big|\big]\big)ds+ \big|\int_0^t\sigma(s,X_s^i)d\overline{B}_{s}^i\big| \nonumber \\
& \leq \big|X_0^i\big|+ b_0T+|b|_{\mathrm{Lip}}\int_0^t \big(\big|X_s^i\big|+ \E_{\overline{\mathbb P}^N}\big[\big|X_s^i\big|\big]\big)ds+\zeta_{T}^{i}, \label{eq: gronwall 1}
\end{align}
where $\zeta_{T}^{i} = \sup_{0 \leq t \leq T}\big|\int_0^t \sigma(s,X_s^i)d\overline{B}_{s}^i\big| $. Integrating w.r.t. $\overline{\mathbb P}^N$, we also have
$$
\E_{\overline{\mathbb P}^N}\big[\big|X_t^i \big|\big] \leq  \E_{\overline{\mathbb P}^N}\big[\big|X_0^i\big|\big]+ b_0T+2|b|_{\mathrm{Lip}}\int_0^t \E_{\overline{\mathbb P}^N}\big[\big|X_s^i\big|\big]ds+\E_{\overline{\mathbb P}^N}\big[\zeta_{T}^{i}\big].
$$
We infer by Gr\"onwall's lemma 
$$
\E_{\overline{\mathbb P}^N}\big[\big|X_t^i \big|\big] \leq \big(\E_{\overline{\mathbb P}^N}\big[\big|X_0^i\big|\big]+ b_0T+\E_{\overline{\mathbb P}^N}\big[\zeta_{T}^{i}\big]\big)\mathrm{e}^{2|b|_{\mathrm{Lip}}t}
$$
and plugging this estimate in \eqref{eq: gronwall 1} we infer
\begin{align*}
\big|X_t^i \big| \leq \big|X_0^i\big|+ b_0T+|b|_{\mathrm{Lip}}\int_0^t \big|X_s^i\big|ds+ 
\big(\E_{\overline{\mathbb P}^N}\big[\big|X_0^i\big|\big]+ b_0T+\E_{\overline{\mathbb P}^N}\big[\zeta_{T}^{i}\big]\big)\mathrm{e}^{2|b|_{\mathrm{Lip}}T}+\zeta_T^i.
\end{align*}
Applying Gr\"onwall's lemma again, we derive
\begin{align*}
\big|X_t^i \big| & \leq \big(\big|X_0^i\big|+ b_0T+ 
\big(\E_{\overline{\mathbb P}^N}\big[\big|X_0^i\big|\big]+ b_0T+\E_{\overline{\mathbb P}^N}\big[\zeta_{T}^{i}\big]\big)\mathrm{e}^{2|b|_{\mathrm{Lip}}T}+\zeta_T^i\big)\mathrm{e}^{|b|_{\mathrm{Lip}}t} \\
& \leq \big(\big|X_0^i\big|+\E_{\overline{\mathbb P}^N}\big[\big|X_0^i\big|\big]+2b_0T+\zeta_T^i+\E_{\overline{\mathbb P}^N}\big[\zeta_{T}^{i}\big]\big)\mathrm{e}^{3|b|_{\mathrm{Lip}}t}.
\end{align*}
Taking the exponent $2p$ and expectation w.r.t. $\overline{\mathbb P}^N$, we further obtain
\begin{align*}
\E_{\overline{\mathbb P}^N}\big[\big|X_t^i \big|^{2p}\big] & \leq 5^{2p-1}\big(2\E_{\overline{\mathbb P}^N}\big[\big|X_0^i\big|^{2p}\big]+(2b_0T)^p+2\E_{\overline{\mathbb P}^N}\big[\big(\zeta_{T}^{i})^{2p}\big]\big)\mathrm{e}^{3|b|_{\mathrm{Lip}}Tp} \\
& \leq C_6^p\big(\E_{\overline{\mathbb P}^N}\big[\big|X_0^i\big|^{2p}\big]+(b_0T)^p+\E_{\overline{\mathbb P}^N}\big[\big(\zeta_{T}^{i})^{2p}\big]\big)
\end{align*}
with $C_6=50\,\mathrm{e}^{3|b|_{\mathrm{Lip}}T}$. 
By Assumption \ref{ass: init condition}, the initial condition $|X_0^i|$ satisfies
$$\E_{\overline{\PP}^N}\big[\exp(\gamma_0|X_0^i|^2)\big] = 1 + \sum_{p \geq 1}\frac{\gamma_0^p}{p!}\E_{\overline{\PP}^N}\big[\big|X_0^i\big|^{2p}\big] \leq \gamma_1$$
hence for every $p \geq 1$, we obtain
$$
\E_{\overline{\mathbb P}^N}\big[\big|X_0^i\big|^{2p}\big] \leq p! \big(\tfrac{\gamma_1}{\gamma_0}\big)^p
$$
since $\gamma_1 \geq 1$.
By Burkholder-Davis-Gundy's inequality with constant $(C^\star)^{p/2} p^{p/2}$ for some numerical constant $C^\star$, see {\it e.g.} Barlow and Yor \cite{BDG}, we also have
\begin{align*}
\E_{\overline{\mathbb P}^N}\big[\big(\zeta_{T}^{i})^{2p}\big] & \leq \Big(\frac{2p}{2p-1}\Big)^{2p}\,\E_{\overline{\mathbb P}^N}\Big[\Big|\int_0^T\sigma(t,X_t^i)d\overline{B}^i_t\Big|^{2p}\Big] \\
& \leq \Big(\frac{2p}{2p-1}\Big)^{2p}(2C^\star)^p p^{p}\E_{\overline{\mathbb P}^N}\Big[\big(\int_0^T\mathrm{Tr}\big(c(t,X_t^i)\big)dt\big)^p\Big] \\
& \leq  p^{p} (8C^\star T\big|\mathrm{Tr}(c)\big|_\infty)^p
 \leq p! \, \big(8C^\star \mathrm{e}T \big|\mathrm{Tr}(c)\big|_\infty\big)^p.
\end{align*}
Putting these estimates together, we conclude
\begin{align*}
\E_{\overline{\mathbb P}^N}\big[\big|X_t^i \big|^{2p}\big]  & \leq p! \,C_6^{p}\Big(\tfrac{\gamma_1}{\gamma_0}+T(b_0+8C^\star \mathrm{e}\big|\mathrm{Tr}(c)\big|_\infty)\Big)^p 
\end{align*}
and Lemma \ref{lem moment class} is established with 
$C_2 = C_6\big(\tfrac{\gamma_1}{\gamma_0}+T(b_0+8C^\star \mathrm{e}\big|\mathrm{Tr}(c)\big|_\infty)\big)$.

\subsection{Proof of Lemma \ref{lem rosenthal}} \label{sec: proof of lemma rosen}
 Fix $\mathcal I_k  = \{i_1,\ldots, i_k\} \subset \{1,\ldots, N\}$.
 For $g:  [0,T] \times (\R^d)^{k} \times (\R^d)^\ell \rightarrow \R^d$, 
 we define 
 $$g_{\mathcal I_k}(t,y^\ell) = g(t,X_t^{i_1},\ldots, X_t^{i_k}, y^\ell).$$
For technical convenience, we establish a slightly stronger, replacing $\mathcal V_{2p}^N\big(f(t,\cdot)\big)$ in \eqref{eq: rosen} by
$$
{\mathcal V}_{2p, \ell}^N\big(g_{\mathcal I_{k-\ell+1}}(t,\cdot)\big)  = \E_{\overline{\PP}^N}\Big[\big|\int_{(\R^d)^{\ell}}  g(t,X_t^{i_1},X_t^{i_2},\ldots, X_t^{i_{k-\ell+1}}, y^{\ell})(\mu_t^N-\mu_t)^{\otimes \ell}(dy^{\ell})\big|^{2p}\Big]
$$
for every $\mathcal I_{k-\ell+1} \subset \{1,\ldots, N\}$ with cardinality $k-\ell+1$ and every function $g:  [0,T] \times (\R^d)^{k-\ell+1} \times (\R^d)^\ell \rightarrow \R^d$, Lipschitz continuous in the space variables, that defines in turn a class $\mathcal G_{k-\ell+1,\ell}$. In particular $\mathcal V_{2p,\ell}^N\big(f(t,\cdot)\big)$ and ${\mathcal V}_{2p, \ell}^N\big(g_{\mathcal I_{k-\ell+1}}(t,\cdot)\big)$ agree for $\ell = k$ in which case the class $\mathcal G_{1,k}$ coincide with $\mathcal G_k$ and we obtain Lemma \ref{lem rosenthal}. We prove the result by induction.
 
\subsubsection*{Step 1:} The case $\ell = 1$. For $g\in \mathcal G_{k,1}$, $x^{k}\in (\R^d)^{k}$ and $\mathcal I \subset \{1,\ldots,N\}$, let
$$\Lambda_t^{\mathcal I}(g,x^{k}) = \int_{\R^d} g(t,x^{k},y)(\mu^{\mathcal J}_t- \mu_t)(dy),$$
where we write
$\mu^{\mathcal J}_t(dx) = |\mathcal J|^{-1}\sum_{i \in \mathcal J}\delta_{X_t ^i}(dx)$
for the empirical measure in restriction to $\mathcal I$. Observe that $\Lambda_t^{\mathcal I}(g,x^{k})$ is a sum of independent and centred random variables under $\overline{\mathbb P}^{N}$. 
We write
\begin{align*}
 \Lambda_t^{\{1,\ldots, N\}}(g, X_t^{i_1},\ldots, X_t^{i_k}) 
& = N^{-1}\sum_{i  \in \mathcal I_k}\Big(g(t,X_t^{i_1},\ldots, X_t^{i_k},X_t^{i})-\int_{\R^d}g(t, X_t^{i_1},\ldots, X_t^{i_k},y)\mu_t(dy)\Big)\\ 
&+\frac{N-k}{N} \Lambda_t^{\mathcal I_k^c}(g, X_t^{i_1},\ldots, X_t^{i_k}),
\end{align*}
since $|\mathcal I_k|=k$. We obtain the decomposition
$$\mathcal V_{2p,1}^N\big(g_{\mathcal I_k}(t,\cdot)\big)  = \E_{\overline{\PP}^N}\Big[\big| \Lambda_t^{\{1,\ldots, N\}}(g, X_t^{i_1},\ldots, X_t^{i_k}) \big|^{2p}\Big] \leq 2^{2p-1}(I+II),$$
with
\begin{align*}
I &=\frac{k^{2p-1}}{N^{2p}}\sum_{i \in \mathcal I_k}\Big(\E_{\overline{\PP}^N}\Big[\big|g(t,X_t^{i_1},\ldots, X_t^{i_k},X_t^{i})-\int_{\R^d}g(t, X_t^{i_1},\ldots, X_t^{i_k},y)\mu_t(dy)\big|^{2p}\Big]\Big),\\
II & = \Big(\frac{N-k}{N}\Big)^{2p}\E_{\overline{\PP}^N}\Big[\big|\Lambda_t^{\mathcal I_k^c}(g, X_t^{i_1},\ldots, X_t^{i_k})\big|^{2p}\Big].
\end{align*}
The term $I$ is controlled by the smoothness of $g$: 
\begin{align*}
I & \leq \frac{k^{2p-1}}{N^{2p}}|g(t,\cdot)|_{\mathrm{Lip}}^{2p} \sum_{i \in \mathcal I_k}\E_{\overline{\PP}^N}\Big[\int_{\R^d}|X_t^i-y|^{2p}\mu_t(dy)\Big] 
 \leq N^{-2p} p! \big(k^{2}4C_2\big)^p |g(t,\cdot)|_{\mathrm{Lip}}^{2p}, 
\end{align*}
where the last estimate stems from Lemma \ref{lem moment class}.
For the term $II$, writing $g = (g^1,\ldots, g^d)$ where the functions $g^j$ are real-valued, we further have 
\begin{equation} \label{eq: sec controle sub gauss}
II   \leq \Big(\frac{N-k}{N}\Big)^{2p} d^{2p-1}\sum_{j = 1}^d  \E_{\overline{\PP}^N}\big[\Lambda_t^{\mathcal I_k^c}(g^j, X_t^{i_1},\ldots, X_t^{i_k})^{2p}\big].
\end{equation}
Moreover, for every $x\in \R^{d}$, the term
$$\Lambda_t^{\mathcal I_k^c}(g^j,x^{k}) = \frac{1}{N-k}\sum_{i \in \mathcal I_k^c}\big(g^j(t,x^{k},X_t^{i})- \E_{\overline{\PP}^{N}}\big[g^j(t,x^{k},X_t^{i})\big]\big)$$
is the sum of independent centred random variables that are independent of $(X_t^{i_1}, \ldots, X_t^{i_k})$ and 
$$g^j(t,x^{k},X_t^{i})- \E_{\overline{\PP}^{N}}\big[g^j(t,x^{k},X_t^{i})\big]$$
is $\lambda^2$ sub-Gaussian with $\lambda^2 = 24C_2|g^j(t,\cdot)|_{\mathrm{Lip}}^{2}$
via the same estimate as for $I$ and the fact that (ii) implies (iv) in Definition \ref{def: sub gaussian}. Thanks to the additivity property of independent sub-Gaussian random variables, we further infer that
$\Lambda_t^{\mathcal I_k^c}(g^j,x^{k})$ is $\widetilde \lambda^2$ sub-Gaussian with
$$\widetilde \lambda^2 = \frac{1}{N-k}\lambda^2 = \frac{1}{N-k}24C_2|g^j(t,\cdot)|_{\mathrm{Lip}}^{2}.$$
Conditioning on $(X_t^{i_1}, \ldots, X_t^{i_k})$,
we derive
$$  \E_{\overline{\PP}^N}\big[\Lambda_t^{\mathcal I_k^c}(g^j, X_t^{i_1},\ldots, X_t^{i_k})^{2p}\big] \leq   \frac{p! (96C_2)^p}{(N-k)^p} |g^j(t,\cdot)|_{\mathrm{Lip}}^{2p}$$
by (ii) of Definition \ref{def: sub gaussian}. Plugging this estimate in \eqref{eq: sec controle sub gauss}, we obtain
\begin{align*}
II & \leq  \frac{p! (96C_2d^2)^p}{(N-k)^p} |g(t,\cdot)|_{\mathrm{Lip}}^{2p} 
\end{align*}
and putting together our estimates for $I$ and $II$, we conclude
$$\mathcal V_{2p, 1}^N\big(g(t,\cdot)\big) \leq  \frac{p! K_1^p}{(N-k)^p} |g(t,\cdot)|_{\mathrm{Lip}}^{2p}$$
with $K_1 = 16(k^2+24d^2)C_2$. This establishes Lemma \ref{lem rosenthal} for $g$ in the case $\ell = 1$.
\subsubsection*{Step 2:} We assume that \eqref{eq: rosen} holds for $\mathcal V_{2p,\ell}^N\big(g_{{\mathcal I}_{k-\ell+1}}(t,\cdot)\big)$, for every $\mathcal I_{k-\ell+1} \subset \{1,\ldots, N\}$ with cardinality $k-\ell+1$ and every $g \in \mathcal G_{k-\ell+1,\ell}$ with $\ell < k$. 
Let $g \in {\mathcal G}_{k-\ell,\ell+1}$ and ${\mathcal I}_{k-\ell} \subset \{1,\ldots, N\}$. We have:
\begin{align*}
\mathcal V_{2p,\ell+1}^N\big(g_{{\mathcal I}_{k-\ell}}(t,\cdot)\big) & = \E_{\overline{\PP}^{N}}\Big[\big|\int_{(\R^d)^{\ell+1}}  g_{\mathcal I_{k-\ell}}(t,y^{\ell+1})(\mu_t^N-\mu_t)^{\otimes (\ell+1)}(dy^{\ell+1})\big|^{2p}\Big] \\
& \leq 2^{2p-1}(III + IV),
\end{align*}
with

\begin{align*}
III & =  N^{-1}\sum_{i = 1}^N\E_{\overline{\PP}^{N}}\Big[\big|\int_{(\R^d)^{\ell}}  g\big(t,X_t^{i_1},\ldots, X_{t}^{i_{k-\ell}}, (X_t^i,y^\ell)\big)(\mu_t^N-\mu_t)^{\otimes \ell}(dy^{\ell}) \big|^{2p}\Big], \\
IV & = \int_{\R^d}\E_{\overline{\PP}^{N}}\Big[\big|\int_{(\R^d)^{\ell}}  g\big(t,X_t^{i_1},\ldots, X_{t}^{i_{k-\ell}}, (y,y^\ell)\big)(\mu_t^N-\mu_t)^{\otimes \ell}(dy^{\ell}) \big|^{2p}\Big]\mu_t(dy). 
\end{align*}
Let $i_0 \in \mathcal I_{k-\ell}^c$ and put $\mathcal I_{k-\ell+1} = \mathcal I_{k-\ell} \cup \{i_0\}$. The term $IV$ can be rewritten as
$$IV = \int_{\R^d}\mathcal V^N_{2p,\ell}\big(g_{\mathcal I_{k-\ell+1}}'(t,\cdot)(y)\big)\mu_t(dy),$$
where, for fixed $y \in \R^d$, the function $g'(t,x_{i_1}, \ldots x_{i_{k-\ell}}, x_{i_0},y^\ell)(y) = g(t,x_{i_1}, \ldots x_{i_{k-\ell}},(y,y^\ell))$ with the artificial variable $x_{i_0}$ belongs to $\mathcal G_{k-\ell+1,\ell}$. By the induction hypothesis and noting that $\sup_{y \in \R^d}|g'(t,\cdot,y)|_{\mathrm{Lip}} \leq |g(t,\cdot)|_{\mathrm{Lip}}$, we infer
$$IV \leq \frac{p! K_\ell^p}{(N-k)^p}|g(t,\cdot)|_{\mathrm{Lip}}^{2p}.$$
We split the sum in $III$ over indices in $\mathcal I_{k-\ell}$ and $\mathcal I_{k-\ell}^c$. If $i \in \mathcal I_{k-\ell}$, in the same way as for $IV$, we write 
$$g\big(t,X_t^{i_1},\ldots, X_{t}^{i_{k-\ell}}, (X_t^i,y^\ell)\big) = g''_{\mathcal I_{k-\ell+1}}(t,y^\ell)$$
with $\mathcal I_{k-\ell+1} = \mathcal I_{k-\ell} \cup \{i_0\}$ for some arbitrary $i_0 \in \mathcal I_{k-\ell}^c$ and 
with $g''(t,x_{i_1},\ldots, x_{i_{k-\ell}},x_{i_0},y^\ell) =g\big(t,x_{i_1},\ldots, x_{i_{k-\ell}},(x_i,y^\ell)\big)$, where $i$ coincides with one of the $i_j \in \mathcal I_{k-\ell}$. Also, $g''$ belongs to $\mathcal G_{k-\ell+1,\ell}$. If $i \in \mathcal I_{k-\ell}^c$, we write
$$g\big(t,X_t^{i_1},\ldots, X_{t}^{i_{k-\ell}}, (X_t^i,y^\ell)\big) = g'''_{ \mathcal I_{k-\ell} \cup \{i\}}(t,y^\ell)$$
with $g'''(t,x_{i_1},\ldots, x_{i_{k-\ell}}, x_i,y^\ell) =g\big(t,x_{i_1},\ldots, x_{i_{k-\ell}},(x_i,y^\ell)\big)$ and $g'''$ belongs to $\mathcal G_{k-\ell+1,\ell}$ as well. 
We infer
\begin{align*}
III & \leq (k-\ell)N^{-1}\mathcal V^N_{2p,\ell}\big(g''_{\mathcal I_{k-\ell+1}}(t,\cdot)\big)+ N^{-1}\sum_{i \in \mathcal I_{k-\ell}^c}\mathcal V^N_{2p,\ell}\big(g'''_{ \mathcal I_{k-\ell} \cup \{i\}}(t,\cdot)\big)  \leq \frac{p! K_\ell^p}{(N-k)^p}|g(t,\cdot)|_{\mathrm{Lip}}^{2p}
\end{align*}
by the induction hypothesis and noting again that $|g''(t,\cdot)|_{\mathrm{Lip}}$ and $|g'''(t,\cdot)|_{\mathrm{Lip}}$ are controlled by $|g(t,\cdot)|_{\mathrm{Lip}}$. We conclude
$$
\mathcal V_{2p,\ell+1}^N\big(g_{{\mathcal I}_{k-\ell}}(t,\cdot)\big)  \leq 2^{2p}\frac{p! K_\ell^p}{(N-k)^p}|g(t,\cdot)|_{\mathrm{Lip}}^{2p} 
 =\frac{p! K_{\ell+1}^p}{(N-k)^p}|g(t,\cdot)|_{\mathrm{Lip}}^{2p}
$$
with $K_{\ell+1} =  4 K_\ell$. The proof of Lemma \ref{lem rosenthal} is complete.

\subsection{(Sketch of) proof of Proposition \ref{prop : reg Holder}} \label{proof : reg Holder}

\noindent {\it Step 1:} Thanks to Chapters 6 and 9 of \cite{BoKry}, since $F, G$ are bounded and $\mu_0$ satisfies Assumption \ref{ass: init condition}, it can be shown that \eqref{eq: mckv approx} admits a unique probability solution $\mu$ in the sense of \cite{BoKry}, absolutely continuous w.r.t. the Lebesgue measure, that we still denote $\mu(t,x) = \mu_t(x)$. Moreover $\mu \in \mathcal{H}_{\text{loc}}^{\delta/2,\delta}  = \cap_{(t_0,x,_0)\in (0,T)\times \R^d} \mathcal H^{\delta/2,\delta}(t_0,x_0)$ for every $0 < \delta < 1$. The main arguments of these properties rely on the existence of a suitable Lyapunov function associated to \eqref{eq: mckv approx}, following the terminology of \cite{ManitaShaposh14} and \cite{BoKry} (for instance $x \mapsto 1+ |x|^2$) together with Sobolev embeddings.\\

\noindent {\it Step 2:}  Define 
$$\widetilde{a}_k(t,x)=G^k(x)+\int_{\R^d}F^k(x-y)\mu_t(y)dy,\;\;k=1,\ldots,d,$$ 
and
$$\widetilde{a}(t,x) = \text{div} (G(x) +  \int_{\R^d}F(x-y)\mu_t(y)dy),$$ 
which are well defined since $\beta,\beta' >1$. Consider next the Cauchy problem associated to \eqref{eq: mckv approx} in its strong form:
\begin{equation}  \label{eq: mckv approx bis}
\left\{ 
\begin{array}{ll}
\partial_{t}\widetilde{\mu}_t = \frac{1}{2}\sigma^2\Delta \widetilde{\mu_t} - \sum_{k=1}^d \widetilde{a}_k(t,\cdot) \partial_{k}\widetilde{\mu_t} - \widetilde{a}(t,\cdot)\widetilde{\mu}_t \\ 
\tilde{\mu}_{t = 0} = \mu_0.
\end{array}
\right. 
\end{equation} 
Taking $\delta =\beta-\lfloor \beta \rfloor $ we obtain $\widetilde{a}_i, \widetilde{a} \in \mathcal{C}^{(\beta-\lfloor \beta \rfloor)/2,\beta-\lfloor \beta \rfloor}_{\text{loc}}$ by Step 1.\\ 

\noindent {\it Step 3:} Using $\text{inf} \; \widetilde{a} > -\infty$ and the existence of a Lyapunov function associated to the problem, by Theorem 2.3 of \cite{AngiuliLorenzi}, there exists a unique solution $\widetilde \mu$ of \eqref{eq: mckv approx bis}. Moreover, $\widetilde \mu$ is continuous on $(0,T) \times \R^d$ and
$$\widetilde \mu \in \mathcal{C}^{1+(\beta-\lfloor \beta \rfloor)/2,2+ \beta-\lfloor \beta \rfloor }_{\text{loc}}.$$
It is also the unique solution defined in Theorem 12 of Chapter 1 of \cite{friedman}, therefore the unique integrable solution of the problem \eqref{eq: mckv approx}. By uniqueness, $\mu = \widetilde{\mu}$.\\ 

\noindent {\it Step 4:} If $\lfloor \beta \rfloor = 1 $, we obtain $\mu \in \mathcal{H}^{(1+\beta)/2, 1+\beta}(t_0,x_0)$ for every $(t_0,x_0)\in (0,T)\times \R^d$. Otherwise, we can iterate the process thanks to results of  Section 8.12 in \cite{KrylovHolder}: successively:
\begin{itemize}
\item Since $\partial_{x_{k'}}\widetilde{a}_k$ and $\partial_{x_k} \widetilde{a}$ are in $\mathcal{C}^{(\beta-\lfloor \beta \rfloor )/2,\beta-\lfloor \beta \rfloor}_{\text{loc}}$, we have
$$\partial_{x_k} \mu \in \mathcal{C}^{1+(\beta-\lfloor \beta \rfloor)/2,2+ \beta-\lfloor \beta \rfloor }_{\text{loc}}.$$ 
\item Since $\partial_{t}\widetilde{a}_k$ and $\partial_{t} \widetilde{a}$ are now in $\mathcal{C}^{(\beta-\lfloor \beta\rfloor)/2,\beta-\lfloor \beta \rfloor}_{\text{loc}}$, we have
$$\partial_{t} \mu \in \mathcal{C}^{1+(\beta-\lfloor \beta \rfloor)/2,2+ \beta-\lfloor \beta \rfloor }_{\text{loc}}.$$
\end{itemize}
Therefore, if $\lfloor \beta \rfloor = 2 $, we obtain $\mu \in \mathcal{H}^{(1+\beta)/2, 1+\beta}(t_0,x_0)$ for every $(t_0,x_0)\in (0,T)\times \R^d$. Otherwise, we can iterate again the process and so on.  The result follows.

{\small \subsection*{Acknowledgements} {\it Informal discussions with colleagues at CEREMADE are gratefully acknowledged; we thank in particular, Pierre Cardaliaguet, Djalil Chafa\"i and St\'ephane Mischler. We also thank Denis Belomestny and Nicolas Fournier for insightful comments. This work partially answers a problem that was posed to us by Sylvie M\'el\'eard almost two decades ago (at a time we did not have the proper tools to address it!).}
}

\bibliographystyle{plain}       
\bibliography{biblioDeH.bib}

\end{document}